\DeclareMathOperator{\Lie}{Lie}
\DeclareMathOperator{\rk}{rk}
\DeclareMathOperator{\ad}{ad}
\DeclareMathOperator{\Der}{Der}
  \renewenvironment{thebibliography}[1]{
    \begin{oldthebibliography}{#1}
      \setlength{\parskip}{0ex}
      \setlength{\itemsep}{0ex}
  }
  {
    \end{oldthebibliography}
  }
\begin{document}

\newcounter{rownum}
\setcounter{rownum}{0}
\newcommand{\ab}{\addtocounter{rownum}{1}\arabic{rownum}}

\newcommand{\x}{$\times$}
\newcommand{\bb}{\mathbf}

\newcommand{\Ind}{\mathrm{Ind}}
\newcommand{\Char}{\mathrm{char}}
\newcommand{\hra}{\hookrightarrow}
\newtheorem{lemma}{Lemma}[section]
\newtheorem{theorem}[lemma]{Theorem}
\newtheorem*{TA}{Theorem A}
\newtheorem*{TB}{Theorem B}
\newtheorem*{TC}{Theorem C}
\newtheorem*{CorC}{Corollary C}
\newtheorem*{TD}{Theorem D}
\newtheorem*{TE}{Theorem E}
\newtheorem*{PF}{Proposition E}
\newtheorem*{C3}{Corollary 3}
\newtheorem*{T4}{Theorem 4}
\newtheorem*{C5}{Corollary 5}
\newtheorem*{C6}{Corollary 6}
\newtheorem*{C7}{Corollary 7}
\newtheorem*{C8}{Corollary 8}
\newtheorem*{claim}{Claim}
\newtheorem{cor}[lemma]{Corollary}
\newtheorem{conjecture}[lemma]{Conjecture}
\newtheorem{prop}[lemma]{Proposition}
\newtheorem{question}[lemma]{Question}
\theoremstyle{definition}
\newtheorem{example}[lemma]{Example}
\newtheorem{examples}[lemma]{Examples}
\theoremstyle{remark}
\newtheorem{remark}[lemma]{Remark}
\newtheorem{remarks}[lemma]{Remarks}
\newtheorem{obs}[lemma]{Observation}
\theoremstyle{definition}
\newtheorem{defn}[lemma]{Definition}

 \def\hal{\unskip\nobreak\hfil\penalty50\hskip10pt\hbox{}\nobreak
 \hfill\vrule height 5pt width 6pt depth 1pt\par\vskip 2mm}

\renewcommand{\labelenumi}{(\roman{enumi})}
\newcommand{\Hom}{\mathrm{Hom}}
\newcommand{\Int}{\mathrm{int}}
\newcommand{\Ext}{\mathrm{Ext}}
\newcommand{\opH}{\mathrm{H}}
\newcommand{\D}{\mathcal{D}}
\newcommand{\soc}{\mathrm{Soc}}
\newcommand{\SO}{\mathrm{SO}}
\newcommand{\Sp}{\mathrm{Sp}}
\newcommand{\SL}{\mathrm{SL}}
\newcommand{\GL}{\mathrm{GL}}
\newcommand{\OO}{\mathcal{O}}
\newcommand{\diag}{\mathrm{diag}}
\newcommand{\End}{\mathrm{End}}
\newcommand{\tr}{\mathrm{tr}}
\newcommand{\Stab}{\mathrm{Stab}}
\newcommand{\red}{\mathrm{red}}
\newcommand{\Aut}{\mathrm{Aut}}
\renewcommand{\H}{\mathcal{H}}
\renewcommand{\u}{\mathfrak{u}}
\newcommand{\Ad}{\mathrm{Ad}}
\newcommand{\N}{\mathcal{N}}
\newcommand{\G}{\mathcal{G}}
\newcommand{\Z}{\mathbb{Z}}
\newcommand{\la}{\langle}\newcommand{\ra}{\rangle}
\newcommand{\gl}{\mathfrak{gl}}
\newcommand{\g}{\mathfrak{g}}
\newcommand{\F}{\mathbb{F}}
\newcommand{\m}{\mathfrak{m}}
\renewcommand{\b}{\mathfrak{b}}
\newcommand{\p}{\mathfrak{p}}
\newcommand{\q}{\mathfrak{q}}
\renewcommand{\l}{\mathfrak{l}}
\newcommand{\del}{\partial}
\newcommand{\h}{\mathfrak{h}}
\renewcommand{\t}{\mathfrak{t}}
\renewcommand{\k}{\mathfrak{k}}
\newcommand{\Gm}{\mathbb{G}_m}
\renewcommand{\c}{\mathfrak{c}}
\renewcommand{\r}{\mathfrak{r}}
\newcommand{\n}{\mathfrak{n}}
\newcommand{\s}{\mathfrak{s}}
\newcommand{\Q}{\mathbb{Q}}
\newcommand{\z}{\mathfrak{z}}
\newcommand{\pso}{\mathfrak{pso}}
\newcommand{\so}{\mathfrak{so}}
\renewcommand{\sl}{\mathfrak{sl}}
\newcommand{\psl}{\mathfrak{psl}}
\renewcommand{\sp}{\mathfrak{sp}}
\newcommand{\Ga}{\mathbb{G}_a}

\newenvironment{changemargin}[1]{%
  \begin{list}{}{%
    \setlength{\topsep}{0pt}%
    \setlength{\topmargin}{#1}%
    \setlength{\listparindent}{\parindent}%
    \setlength{\itemindent}{\parindent}%
    \setlength{\parsep}{\parskip}%
  }%
  \item[]}{\end{list}}

\parindent=0pt
\addtolength{\parskip}{0.5\baselineskip}

\subjclass[2010]{17B45}
\title{A modular analogue of Morozov's theorem on maximal subalgebras of simple Lie algebras}

\author{Alexander Premet}
\thanks{Supported by the Leverhulme Trust (Grant~RPG-2013-293)}
\address{School of Mathematics, The University of Manchester, Oxford Road, M13 9PL, UK} \email{alexander.premet@manchester.ac.uk}
\pagestyle{plain}
\begin{abstract}
Let $G$ be a simple algebraic group over an algebraically closed field of characteristic $p>0$  and suppose that $p$ is a very good prime for $G$. In this paper we prove that any maximal Lie subalgebra $M$ of $\g=\Lie(G)$ with ${\rm rad}(M)\ne 0$ has the form $M=\Lie(P)$ for some maximal parabolic subgroup $P$ of $G$. This means that Morozov's theorem on maximal subalgebras is valid under mild assumptions on $G$. We show that such assumptions are necessary by providing a counterexample to Morozov's theorem for groups of type ${\rm E}_8$ over fields of characteristic $5$. Our proof relies on the main results and methods of the classification theory of finite dimensional simple Lie algebras over fields of prime characteristic.
\end{abstract}
\maketitle
%{\small \tableofcontents}
\section{Introduction}
Let $\Bbbk$ be an algebraically closed field of characteristic $p>0$ and let $G$ be a simple algebraic group over $\Bbbk$.
Let $\Pi$ be a basis of simple roots of the root system $\Phi$ of $G$.
Recall that $p$ is said to be {\it very good} for $G$
if $\Phi$ is not of type ${\rm A}_{kp-1}$ for some
$k\in\Z_{>0}$ and all coefficients $n_\alpha$ of the highest root $\widetilde{\alpha}=\sum_{\alpha\in\Pi}n_\alpha\alpha
\in\Phi^+(\Pi)$ are less than $p$.
It is well known that if $p$ is very good for $\g$ then the Lie algebra $\g=\Lie(G)$ is simple and the centraliser of any semisimple element of $\g$ is a Levi subalgebra of $\g$.
The goal of this paper is to show that an analogue of Morozov's theorem on non-semisimple maximal subalgebras holds for the Lie algebra $\g$. Over complex numbers, Morozov's theorem is the starting point of Dynkin's classification of the maximal subalgebras of simple Lie algebras \cite{Dyn} and one hopes that the theorem stated below will play a similar role in the modular setting.
\begin{theorem}\label{thm:main}
Let $G$ be a simple algebraic $\Bbbk$-group, where $p={\rm char}(\Bbbk)$ is a very good prime for $G$, and let $M$ be a maximal Lie subalgebra of $\g=\Lie(G)$ with ${\rm rad} (M)\ne 0$. Then $M=\Lie(P)$ for some maximal parabolic subgroup $P$ of $G$.
\end{theorem}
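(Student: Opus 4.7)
The overall plan is to extract from the radical $R=\mathrm{rad}(M)\ne 0$ a cocharacter $\lambda\colon\Gm\to G$ such that $M$ is contained in $\p:=\Lie(P_\lambda)=\bigoplus_{i\ge 0}\g(\lambda;i)$, and then use maximality of $M$ to force the equality $M=\p$ together with the maximality of $P_\lambda$. A preliminary self-normalization reduction is standard: since $R$ is a characteristic ideal of $M$, any $x\in\g$ with $[x,M]\subseteq M$ satisfies $\Bbbk x+M\subseteq N_\g(M)$, and by maximality either $N_\g(M)=M$ or $N_\g(M)=\g$; the latter would exhibit $M$ as a proper nonzero ideal of the simple Lie algebra $\g$, which is impossible because $p$ is very good. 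Thus $N_\g(M)=M$, and the same argument gives $N_\g(R)=M$.

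The core of the proof is to produce a nonzero $\ad$-nilpotent element $e\in R$. I would first pass to the last nonzero term $A=R^{(k)}$ of the derived series, an abelian ideal of $M$. If $A$ contains an $\ad$-nilpotent vector we are done; otherwise $A$ is a nonzero toral subalgebra of $\g$. In very good characteristic the centralizer $\c_\g(A)$ is a proper Levi subalgebra and $M\subseteq N_\g(A)$ is sandwiched between this Levi and the Lie algebra of a parabolic containing it. In this second case one must analyze the semisimple quotient $M/R$ using the classification of finite dimensional simple Lie algebras over fields of prime characteristic (Block--Wilson--Strade--Premet): every simple summand is classical, of Cartan type, or Melikian, and its restricted representation on $R$ and on $\g/M$ must be compatible with the non-degenerate invariant form on $\g$. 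Ruling out nonclassical summands (for very good $p$) together with the structure of toral ideals should then either exhibit $M$ as the Lie algebra of a parabolic already, or force a strictly smaller derived ideal $R^{(j)}$ to supply a genuinely $\ad$-nilpotent element.

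Once $0\ne e\in R$ is $\ad$-nilpotent, the modular Jacobson--Morozov theorem in very good characteristic attaches to $e$ a cocharacter $\lambda\colon\Gm\to G$ with $e\in\g(\lambda;2)$. To prove $M\subseteq\p$, assume for contradiction that $M$ meets some $\g(\lambda;-n)$ with $n>0$ nontrivially, pick $y$ there, and bracket it repeatedly with $e$ and with its $\sl_2$-partners to generate a subalgebra of $\g$ that strictly contains $M$ but is not all of $\g$, contradicting maximality. Hence $M\subseteq\p$; maximality yields $M=\p$, and since $M$ is a maximal proper subalgebra, $P_\lambda$ must be a maximal parabolic.

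The main obstacle is undoubtedly the second step, the production of an $\ad$-nilpotent element inside $R$. In characteristic zero this is free from Lie's theorem applied to $[R,R]$, but in positive characteristic solvable subalgebras can be pathological: the nilradical may be strictly smaller than the derived algebra, and $R$ need not be closed under the $p$-operation of $\g$. This is exactly where the modular classification theory is needed, and where the $\mathrm{E}_8$ counterexample for $p=5$ advertised in the abstract must arise, presumably through a Melikian-type subalgebra of $\g$ that is available only in characteristic $5$ and precludes the nilpotent-element argument from going through.
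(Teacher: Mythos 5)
Your opening reductions are sound in outline and essentially reproduce Lemma~\ref{nilne0} of the paper: there, if a maximal abelian ideal $A$ of $M$ (which is $[p]$-closed because $M$, being maximal, is restricted) contained a nonzero semisimple element $t$, then $t$ would be central in $M$, forcing $M=\g_t$ to be a proper Levi subalgebra, which is never maximal; hence ${\rm nil}(M)\ne 0$. The fatal gap is your third step. Attaching an optimal cocharacter $\lambda$ to a single nilpotent $e\in{\rm rad}(M)$ controls $\n_\g(\Bbbk e)$, but $M$ normalizes the ideal ${\rm nil}(M)$, not the line $\Bbbk e$, so nothing forces $M\subseteq\bigoplus_{i\ge 0}\g(\lambda,i)$ unless ${\rm nil}(M)$ happens to be one-dimensional. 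Worse, your proposed contradiction --- bracketing an element of $M\cap\g(\lambda,-n)$ with $e$ and its $\sl_2$-partners ``to generate a subalgebra that strictly contains $M$ but is not all of $\g$'' --- is vacuous: by maximality of $M$, \emph{every} subalgebra strictly containing $M$ equals $\g$, so the properness you need is exactly the statement to be proved, and no mechanism for it is offered. This is not a repairable detail; it is where the entire difficulty of the theorem lives.

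The paper's route is genuinely different. After disposing of the case where $M$ is regular (contains a maximal toral subalgebra of $\g$) by base change to $\mathbb{C}$ and classical Morozov (Lemma~\ref{regsub}), it assumes $M$ is a non-regular counterexample, builds the Weisfeiler filtration of all of $\g$ determined by the pair $(M_{(-1)},M_{(0)})$ with $M_{(0)}=M$, and applies Weisfeiler's theorem to the associated graded algebra: its semisimple quotient has a unique minimal ideal $S\otimes O(m;\underline{n})$ with $S$ simple and graded. The bulk of the proof then runs through the classification of $S$ (classical, $\psl_{kp}$, Cartan types $W$, $S$, $H$, $K$, Melikian), killing each case with absolute toral rank bounds, the $p$-balanced toral elements of Proposition~\ref{balanced}, and the $\OO_{\rm min}$ criterion of Corollary~\ref{max-root}. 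Your appeal to the classification is aimed at the wrong object ($M/{\rm rad}(M)$ rather than the graded Lie algebra of $\g$ itself), and your closing guess that the ${\rm E}_8$, $p=5$ counterexample is ``Melikian-type'' is also off: the maximal subalgebra $\mathfrak{w}$ constructed in the last section is an extension of $W(2;\underline{1})$ by an abelian ideal, and the paper separately proves that Melikian algebras do not embed in $\g$ at all in that case.
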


In good characteristic, it follows from the description of maximal closed subsystems of irreducible root systems that if $P$ is a maximal parabolic subgroup of $G$ then ${\Lie}(P)$ is  a maximal subalgebra of $G$; see Lemma~\ref{regsub} for detail. Thus Theorem~\ref{thm:main} characterises the class of non-semisimple maximal subalgebras of $\g$.

If $G$ is one of the classical groups $\SL(V)$, $\SO(V)$ or $\Sp(V)$ and $p$
is a very good prime for $G$ then Theorem~\ref{thm:main} can be proved very quickly by using Lemma~\ref{nilne0} (which holds when $p$ is very good for $G$) and the reducibility of the action of $M$ on the $G$-module $V$; see
\cite[\S~7]{HS1} for a detailed argument.
Therefore, in what follows we assume that $G$ is an exceptional algebraic group.
No good substitute for $V$ is available in this case and our proof of Theorem~\ref{thm:main}  relies on completely different methods coming from the classification theory of finite dimensional simple Lie algebras over fields of characteristic $p>3$.
We shall see later that if $p$ is not very good for $G$ then Theorem~\ref{thm:main} breaks down very badly for many simple algebraic groups $G$ (classical and exceptional).
For $\g=\sl(V)$ with $p\,| \dim V$ this is due to the fact that any maximal subalgebra of $\g$ acting irreducibly on $V$ is neither semisimple nor parabolic because it has to contain the scalar endomorphisms of $V$.

In characteristic zero, Theorem~\ref{thm:main} is a  classical result of Lie Theory and it has a long history. It was first proved by Morozov in his doctoral dissertation published by Kazan State University in 1943. This text to which Dynkin refers in his seminal article
\cite{Dyn} is hard to find nowadays. Nevertheless,  Panyushev and Vinberg managed to do this and reported that
Morozov's original proof was quite long and relied on case-by-case considerations; see \cite{PV10}. Morozov was probably aware of that because in the short note  \cite{Mor} he has simplified his original arguments.
His shorter (ingenious) proof is easily accessible and is essentially reproduced in \cite[Ch.~VIII, \S~10]{Bour1}. It relies on some results and observations which are no longer valid in prime characteristic.

It should be stressed at this point that a group analogue of Theorem~\ref{thm:main} holds without any restriction on $p$. More precisely,
it was proved by Weisfeiler \cite{W1} and independently by Borel--Tits \cite[Corollaire~3.3]{BT} that if $G$ is a connected reductive algebraic group defined over an algebraically closed field of arbitrary characteristic and $H$ is a maximal subgroup of $G$ then either $H$ is reductive or $H$ is a maximal parabolic subgroup of $G$
(there are a couple of minor glitches in Weisfeiler's argument and it only works over perfect fields).
Although this result implies Morozov's theorem in the characteristic zero case, it cannot be applied for proving Theorem~\ref{thm:main} in full generality. Indeed, a priori it is not clear that
$M=\Lie(H)$ for some Zariski closed subgroup $H$ of $G$ as all  modular substitutes for exponentiation become feeble when $p$ is small.

In the last section of the paper we exhibit two bizarre examples where the equality $M=\Lie(H)$ breaks down in bad characteristic (it would be interesting to determine all such instances). More precisely,  in the case where $G$ is of type ${\rm E}_8$ and $p=5$ we construct a $74$-dimensional maximal Lie subalgebra $\mathfrak{w}$ of $\Lie(G)$  which resides in the middle of a short exact sequence
$$0\to A\to \mathfrak{w}\to W(2;\underline{1})\to 0$$
where $A={\rm nil}(\mathfrak{w})$ is an abelian ideal of $\mathfrak{w}$ isomorphic to $\big(O(2;\underline{1})/\Bbbk 1\big)^*$
as $W(2;\underline{1})$-modules. Here $O(2;\underline{1})=\Bbbk[X,Y]/(X^5,Y^5)$, a truncated polynomial ring in two variables, and $W(2;\underline{1})\,=\,\Der\big(O(2;\underline{1})\big)$, a
Witt--Jacobson Lie algebra. Our second example occurs in the case where $G$ is a group of type ${\rm G}_2$ and $p=2$. It 
is more straightforward and has to do with the fact that in characteristic $2$ the Lie algebra $\Lie(G)$ forgets its identity and becomes isomorphic to $\psl_4$. Several examples of similar nature can be found in other exceptional Lie algebras over fields of bad characteristic. This was recently discovered by  Thomas Purslow who also observed that some {\it very} exotic simple Lie algebras appear as subquotients of exceptional Lie algebras over fields of characteristic $2$ and $3$. One such example in type ${\rm F}_4$ is explicitly described in \cite{Tom}.

Theorem~\ref{thm:main} is the starting point of the joint project with David Stewart which aims to extend Dynkin's classification of maximal subalgebras to the case of reductive Lie algebras over fields of characteristic $p>3$. At the end of the paper we put forward a conjecture on non-semisimple maximal subalgebras of Lie algebras
of type ${\rm E}_8$ over fields of characteristic $5$.

{\bf Acknowledgement.} I would like to thank Jim Humphreys for useful comments on an earlier version of this paper and Tom Purslow for using GAP to verify some crucial properties of the subalgebra
$\mathfrak{w}$. I am very grateful to David Stewart who has read the whole proof, suggested several improvements, pointed out a gap in an earlier version of (\ref{3.20}) and sent me a long list of typos. I am also thankful to Floriana Amicone who spotted a missing case (in type ${\rm F}_4$) in my list of $p$-balanced $G$-orbits of $\g$. I would like to express my appreciation to the anonymous referee for very careful reading and helpful suggestions.
\section{Notation and preliminary results}
\subsection{} Throughout this paper $G$ is an exceptional algebraic $\Bbbk$-group and $p={\rm char}(\Bbbk)$ is a good prime for $G$.
It is well known that in this case the Lie algebra $\g=\Lie(G)$ is simple and its Killing form $\kappa$ is non-degenerate. Being the Lie algebra of an affine algebraic group, $\g$ carries a canonical $[p]$-th power map $\g\ni x\longmapsto x^{[p]}\in\g$ equivariant under the adjoint action of $G$. An element $h\in \g$ is called {\it toral} if $h^{[p]}=h$. The adjoint endomorphism of any toral element of $\g$ is semisimple and its eigenvalues lie in $\F_p$. Therefore, any such element is contained in a maximal toral subalgebra of $\g$. 
Given $x\in\g$ we write $\g_x$ for the centraliser of $x$ in $\g$. If $V$ is a vector space over $\Bbbk$ of dimension $rp$, where $r$ is a positive integer, then we denote by $\mathfrak{psl}(V)$, $\mathfrak{psl}_{rp}$ and $\mathfrak{pgl}_{rp}$
the (restricted) Lie algebras $\sl(V)/\Bbbk\, {\rm Id}_V$, $\sl_{rp}(\Bbbk)/\Bbbk 1$, and $\gl_{rp}(\Bbbk)/\Bbbk 1$, respectively. It is well known (and easily seen) that $\mathfrak{psl}_{rp}=[\mathfrak{pgl}_{rp},\mathfrak{pgl}_{rp}]$ has codimension $1$ in $\mathfrak{pgl}_{rp}$ and $\mathfrak{pgl}_{rp}=\Lie({\rm PGL}_{rp})$. 
If $(p,r)\ne (2,1)$ then 
$\mathfrak{psl}_{rp}\cong \mathfrak{psl}(V)$ is a simple Lie algebra.

Let $\N(\g)$ denote the nilpotent cone of $\g=\Lie(G)$, the set of all $x\in\g$ with $x^{[p]^e}=0$ for $e\gg 0$.
The group $G$ acts on $\N(\g)$ with finitely many orbits which are labelled by their weighted Dynkin diagrams exactly as in the characteristic $0$ case; see \cite{P03}, for example.
The dimensions of nilpotent orbits can be found in \cite[pp.~401--407]{Car}.
Let $\N_p(\g)\,=\,\{x\in \g\,|\,\,x^{[p]}=0\}$, the {\it restricted nullcone} of $\g$.
By \cite{CLNP}, the variety $\N_p(\g)$ coincides with the Zariski closure of a single nilpotent
$G$-orbit; in particular, it is always irreducible.
\subsection{} Let $L$ be a Lie subalgebra of $\g$. The {\it nilradical} of $L$, denoted ${\rm nil}(L)$, is the maximal ideal of $L$ consisting of nilpotent elements of $\g$.
\begin{lemma}\label{nilne0}
If $M$ is a maximal Lie subalgebra of $\g$ with ${\rm rad}(M)\ne 0$ then ${\rm nil}(M)\ne 0$.
\end{lemma}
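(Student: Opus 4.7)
The plan is to argue by contradiction: assume $R := \mathrm{rad}(M) \ne 0$ but $\mathrm{nil}(M) = 0$, and derive a contradiction. My first observation is that $M$ is necessarily a restricted subalgebra of $\g$. Because $\g$ is simple and $M$ is a proper maximal subalgebra, maximality forces $N_\g(M) = M$ (otherwise $N_\g(M) = \g$, making $M$ a proper nonzero ideal of the simple $\g$). But the normaliser of any Lie subalgebra of a restricted Lie algebra is itself restricted: if $x \in N_\g(M)$ then $\ad(x^{[p]}) = \ad(x)^p$ still preserves $M$. Hence $M^{[p]} \subseteq M$.

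Next I would extract an abelian ideal of $M$ sitting inside $R$. Let $A$ be the last nonzero term of the derived series of $R$; it is an abelian characteristic ideal of $R$, and therefore an ideal of $M$. Since $A$ need not be restricted, pass to its restricted closure $\widetilde A \subseteq M$. A short computation based on the identity $\ad(y)(x^{[p]}) = \ad(x)^{p-1}(\ad(y)(x))$ (applied to $x \in A$, $y \in M$, noting $[y,x] \in A$ and $\ad(x)|_A = 0$) shows $A^{[p]} \subseteq Z(M)$; iterating gives $A^{[p]^i} \subseteq Z(M)$ for every $i \ge 1$. Therefore $\widetilde A \subseteq A + Z(M)$ is an abelian restricted ideal of $M$.

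Because $\Bbbk$ is algebraically closed, the abelian restricted $\widetilde A$ admits a canonical decomposition $\widetilde A = \widetilde A_s \oplus \widetilde A_n$ into its toral and $[p]$-nilpotent parts. Applying the same restricted-derivation identity once more, now with $x \in \widetilde A$ so that $\ad(x)$ vanishes on $\widetilde A$, one obtains $\widetilde A^{[p]} \subseteq Z(M)$; since the $[p]$-map is surjective on the toral piece over an algebraically closed base, $\widetilde A_s \subseteq \widetilde A^{[p]}$, forcing $\widetilde A_s \subseteq Z(M)$. Now there are two cases. If $\widetilde A_s \ne 0$, choose a nonzero toral $t \in Z(M)$; then $M \subseteq C_\g(t)$, which is a Levi subalgebra by the good-prime hypothesis and proper because $Z(\g)=0$. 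Maximality gives $M = C_\g(t)$, yet every proper Levi sits strictly inside an associated proper parabolic --- take a Borel containing $t$ and the standard parabolic whose Levi is $C_\g(t)$ --- contradicting maximality. If instead $\widetilde A_s = 0$, then $\widetilde A = \widetilde A_n$ is a nonzero ideal of $M$ lying inside $\N(\g)$, contradicting $\mathrm{nil}(M) = 0$.

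The genuine content sits in two places: (i) verifying that the restricted closure $\widetilde A$ is abelian and remains an ideal of $M$, which rests on the iterated $[p]$-power identities sketched above; and (ii) the Levi-not-maximal step, handled by the brief root-system argument indicated. Everything else is bookkeeping once $M$ has been shown to be restricted.
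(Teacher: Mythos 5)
Your proof is correct and follows essentially the same route as the paper's: extract a nonzero abelian restricted ideal from $\mathrm{rad}(M)$, use the identity $[x^{[p]},y]=(\ad x)^p(y)$ to push its $[p]$-powers into $\z(M)$, and then either the ideal is nil (contradicting $\mathrm{nil}(M)=0$) or $M$ centralises a nonzero toral element and equals a proper Levi subalgebra, which is never maximal. The only cosmetic differences are that the paper starts from a maximal abelian ideal (which is automatically restricted) rather than the derived series plus restricted closure, and detects the semisimple element directly via $t\in\mathrm{span}\{t^{[p]^i}:i\ge 1\}$ instead of the toral/nilpotent decomposition.
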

\begin{proof}
Let $A$ be a maximal abelian ideal of $M$. Since $M$ is
maximal in $\g$, it is a restricted subalgebra of $\g$. Since ${\rm rad}(M)\ne 0$ is a restricted ideal of $M$, the ideal $A\ne 0$ is closed under taking $[p]$-powers in $\g$. If $A\subseteq \N(\g)$
then $0\ne A\subseteq{\rm nil}(M)$ and we are done.
So suppose $A\not\subseteq\N(\g)$. The $A$ must contain a nonzero semisimple element of $\g$, say $t$. Note  that $t$ lies in the span of  all $t^{[p]^i}$ with $i\ge 1$.
As
$$[t^{[p]^i}, M]=(\ad t)^{p^i}(M)\subseteq (\ad t)^2(M)\subseteq [t,A]=0$$ for all $i\ge 1$, this yields $t\in \z(M)$.
Therefore, $M=\g_t$ by the maximality of $M$.
Since $\g$ is simple and $p$ is a good prime for $G$, the centraliser
$\g_t$ is a {\it proper} Levi subalgebra of $\g$. However, such a subalgebra cannot be maximal as it normalises the nilradical of a proper  parabolic subalgebra of $\g$.
This contradiction shows that ${\rm nil}(M)\ne 0$ as stated.
\end{proof}
\subsection{}
A connected reductive $\Bbbk$-group $G$ is called {\it standard} if the derived subgroup of $G$ is simply connected, $p$ is a good prime for $G$, and the Lie algebra $\g=\Lie(G)$ admits a non-degenerate $G$-invariant symmetric bilinear form. Note that if $G$ is standard then so is any Levi subgroup of $G$.
The following lemma is a straightforward generalisation of the main result of \cite{LMT}.
\begin{lemma}\label{borel} Let $G$ be a standard reductive
$\Bbbk$-group and let $L$ be a Lie subalgebra of $\g$ such that $[L,L]$ consists of nilpotent elements of $\g$. Then $L$
is contained in a Borel subalgebra of  $\g$.
\end{lemma}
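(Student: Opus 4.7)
My plan is to deduce Lemma~\ref{borel} from the main theorem of \cite{LMT} by first applying it to $[L,L]$ and then upgrading the containment. The first observation is that $L$ is solvable: every element of $[L,L]$ is nilpotent in $\g$, hence $\ad$-nilpotent, so by Engel's theorem $[L,L]$ is a nilpotent Lie algebra. In particular $[L,L]$ is itself a Lie subalgebra of $\g$ consisting of nilpotent elements, so the main theorem of \cite{LMT} applies and supplies a Borel subgroup $B\subseteq G$ with $[L,L]\subseteq\Lie(R_u(B))=:\n$. Writing $\b:=\Lie(B)$, the task reduces to proving $L\subseteq\b$.

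I would first pass to the restricted $p$-envelope $L_p\subseteq\g$ of $L$. Using $[x^{[p]},y]=(\ad x)^p(y)$ and iterating, every bracket of two elements of $L_p$ can be rewritten as an iterated bracket of elements of $L$, so $[L_p,L_p]\subseteq[L,L]$. The hypothesis therefore carries over and we may assume $L$ is a restricted subalgebra of $\g$. This step is crucial because in prime characteristic we cannot in general split elements of an arbitrary subalgebra into semisimple and nilpotent parts living in the subalgebra.

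To finish, I would adapt the cocharacter argument at the heart of \cite{LMT}. That proof attaches to the nilpotent subalgebra $[L,L]\subseteq\n$ a cocharacter $\lambda\colon\Gm\to G$ for which $B=P(\lambda)$ and $[L,L]\subseteq\g(\lambda,\geq 1)=\n$, where $\g(\lambda,i)$ denotes the $i$-th $\Ad\lambda$-weight space. Decomposing $L=\bigoplus_{i\in\Z}L_i$ with $L_i\subseteq\g(\lambda,i)$, the goal is to show $L_i=0$ for all $i<0$. The mechanism is that $L$ normalises the ideal $[L,L]$, so each $L_i$ maps homogeneous pieces of $[L,L]$ to homogeneous pieces of $[L,L]$ shifted by $i$; combined with the Jordan decomposition inside the restricted algebra $L$, a nonzero $L_i$ with $i<0$ would either push a piece of $[L,L]$ out of $\g(\lambda,\geq 1)$ or contribute nonzero semisimple parts inside $[L,L]$, contradicting either $[L,L]\subseteq\N(\g)$ or the optimality of $\lambda$. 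This yields $L\subseteq\g(\lambda,\geq 0)=\b$.

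The main obstacle I anticipate is the last step: controlling the negative-weight part of $L$. Making this rigorous requires the ``standard'' hypothesis in full strength, since the non-degenerate $G$-invariant symmetric form on $\g$ is what pairs $\g(\lambda,i)$ with $\g(\lambda,-i)$ and allows one to transport the nilpotency constraint on $[L,L]$ to a constraint on all of $L$; goodness of $p$ and simple-connectedness of the derived group guarantee that the $\lambda$-grading behaves as in characteristic zero and that semisimple centralisers are Levi subalgebras, both used implicitly when handling the Jordan decomposition of elements of the restricted subalgebra $L$.
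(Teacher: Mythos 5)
Your reduction to the restricted case is fine (and matches the paper's opening move: the computation $[L_p,L_p]\subseteq[L,L]$ is correct), and invoking Engel's theorem is also how the paper begins. But the heart of your argument --- the ``upgrade'' from $[L,L]\subseteq\Lie(R_u(B))$ to $L\subseteq\Lie(B)$ --- has genuine gaps. First, the decomposition $L=\bigoplus_i L_i$ with $L_i\subseteq\g(\lambda,i)$ does not exist: $L$ is an arbitrary subalgebra and there is no reason it should be stable under $\Ad\,\lambda(\Bbbk^\times)$, so its ``negative-weight part'' is not defined. Second, even if you take $\lambda$ optimal for $[L,L]$ in the Kempf--Rousseau sense and manage to show $L\subseteq\n_\g([L,L])\subseteq\Lie(P(\lambda))$, the parabolic $P(\lambda)$ is in general not a Borel, and when $[L,L]=0$ there is no $\lambda$ at all (that degenerate case, where $L$ is a torus plus nilradical, needs conjugacy of maximal tori and is treated separately in the paper). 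Third, the strategy of fixing the Borel $B$ supplied by \cite{LMT} for $[L,L]$ and then proving $L\subseteq\Lie(B)$ for that same $B$ cannot work: already in $\sl_3$ with $L$ spanned by $h=\diag(1,-1,0)$ and $e_{12}$, conjugating the standard Borel by $\exp(e_{32})$ gives a Borel $B'$ with $[L,L]=\Bbbk e_{12}\subseteq\Lie(R_u(B'))$ but $h\notin\Lie(B')$. The Borel has to be chosen adapted to all of $L$, and arranging that is precisely the content of the lemma; your final paragraph asserts contradictions (``push a piece of $[L,L]$ out of $\g(\lambda,\ge 1)$'', ``contribute nonzero semisimple parts'') that do not follow from anything established --- note in particular that $[L_i,[L,L]]\subseteq[L,L]\subseteq\Lie(R_u(B))$ holds automatically for every $i$, so no contradiction arises there.

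The paper avoids all of this by inducting on $\dim G$. Granting ${\rm nil}(L)\ne 0$, it uses that $L/[L,L]$ is abelian to produce a single common eigenvector $e\in\z({\rm nil}(L))$ with $[L,e]\subseteq\Bbbk e$; then \cite[Theorem~2.3]{P03} gives $L\subseteq\n_\g(\Bbbk e)=\Bbbk h_0\oplus\g_e\subseteq\p$ for the optimal parabolic $\p=\bigoplus_{i\ge 0}\g(\lambda,i)$ of $e$, and the induction hypothesis applied to the image of $L$ in the Levi quotient $\g(\lambda,0)$ produces a Borel subalgebra whose preimage in $\p$ is a Borel subalgebra of $\g$ containing $L$. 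To rescue your approach you would need to replace the grading argument by a descent of exactly this kind.
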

\begin{proof} Replacing $L$ by its $p$-closure in $\g$ if need be we may assume that $L$ is a restricted subalgebra of $\g$.
We use induction on the dimension of $G$.
Suppose the statement holds for all standard reductive $\Bbbk$-groups of dimension $\le n$ (this is obviously true when $n=1$).
Now suppose $\dim(G)=n+1$.
By Engel's theorem, the Lie algebra $[L,L]$ is nilpotent.
If $[L,L]\ne 0$ then ${\rm nil}(L)\ne 0$.
If $[L,L]=0$ then
$L=L_s\oplus L_n$ where $L_s$ is a toral subalgebra of $\g$ and $L_n={\rm nil}(L)$. If $L=L_s$ then $L\subseteq \Lie(T)$ for some maximal torus
$T$ of $G$; see \cite[Theorem~13.3]{Hum}. So we are done in this case.

Thus we may assume that $\n:={\rm nil}(L)\ne 0$.
Then $\z(\n)\ne 0$. As $[L,L]\subseteq {\rm nil}(L)$ by our assumption on $L$, the adjoint action of $L$ induces a representation of the abelian Lie algebra $L/[L,L]$ on $\z(\n)$.
So there exists a nonzero $e\in \z(\n)$ such that $[L,e]\subseteq \Bbbk e$.
As $e$ is a nilpotent element of $\g$ it admits a cocharacter $\lambda\colon\,\Bbbk^\times\to G$ optimal in the sense of the Kempf--Rousseau theory.
Let $P$ be the parabolic subgroup associated with $\lambda$ and $\p=\Lie(P)$.
Then $\p=\bigoplus_{i\ge 0}\,\g(\lambda,i)$.
By \cite[Theorem~2.3]{P03}, one can choose $\lambda$ in such a way that $e\in\g(\lambda,2)$
and $\g_e\subseteq \p$. Furthermore, $[\g(\lambda,i),e]=\g(\lambda,i+2)$ for all $i\ge 0$.
Taking $i=0$ we find $h_0\in\g(\lambda,0)$ with $[h_0,e]=e$. This implies that $$L\subseteq\n_\g(\Bbbk e)=\Bbbk h_0\oplus\g_e\subseteq \p.$$
By our induction assumption, the image of $L$ in the Levi  subalgebra $\g(\lambda,0)\cong\p/{\rm nil}(\p)$ of $\g$ is contained in a Borel subalgebra of $\g(\lambda,0)$, say $\b$. Since the inverse image of $\b$ under the canonical homomorphism $\p\twoheadrightarrow \g(\lambda,0)$ is a Borel subalgebra of $\g$, this accomplishes the induction step of our proof.
\end{proof}
\subsection{}
We denote by $\OO_{\rm min}$ the minimal nonzero nilpotent orbit in $\g$. It consists of all nonzero $e\in\g$ with the property that $[e,[e,\g]]=\Bbbk e$.
\begin{cor}\label{max-root}
Let $M$ be a maximal Lie subalgebra of $\g$ and denote by $N$ the nilradical of $M$. Suppose $N\ne 0$ and let $R$ be any Lie subalgebra of $M$ whose derived ideal $[R,R]$ consists of nilpotent elements of $\g$. Then
the centraliser $\c_\g(N)$ is an ideal of $M$ and there exists $e\in \c_\g(N)\cap\OO_{\rm min}$ such that $[R,e]\subseteq \Bbbk e$.
\end{cor}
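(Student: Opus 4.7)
The claim that $\c_\g(N)$ is an ideal of $M$ will follow from a one-line Jacobi computation: if $x\in M$, $y\in\c_\g(N)$, and $n\in N$, then
\[
[n,[x,y]]=[[n,x],y]+[x,[n,y]]=0,
\]
since $[n,x]\in N$ (as $N$ is an ideal of $M$) and $[n,y]=0$. So $[M,\c_\g(N)]\subseteq\c_\g(N)$. The substance of the argument lies in producing $e$, and my plan is to package $R$ and $N$ into one subalgebra to which Lemma~\ref{borel} can be applied.

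Set $L:=R+N\subseteq M$. Because $N$ is an ideal of $M$ and $R\subseteq M$, one has $[R,N]\subseteq N$, so $L$ is a Lie subalgebra of $\g$. Its derived algebra
\[
[L,L]=[R,R]+[R,N]+[N,N]
\]
is contained in $\N(\g)$: the first summand by the hypothesis on $R$, and the other two because they lie in $N\subseteq{\rm nil}(M)\subseteq\N(\g)$. Hence Lemma~\ref{borel} supplies a Borel subalgebra $\b=\t\oplus\u$ of $\g$, with $\u={\rm nil}(\b)$, containing all of $L$.

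Since $\g$ is simple, the nilpotent elements of $\g$ lying in $\b$ are exactly those of $\u$ (a standard weight argument relative to $\t$ forces the $\t$-component of an ad-nilpotent element of $\b$ to act by zero, hence to vanish). Consequently $N\subseteq\u$. Let $e:=e_{\widetilde{\alpha}}$ be a highest root vector for the positive system determined by $\b$. The usual root-system calculation yields $\Bbbk e=\z(\u)$ and $[e,[e,\g]]=\Bbbk e$, so $e\in\c_\g(\u)\subseteq\c_\g(N)$ and $e\in\OO_{\rm min}$. Finally $[\b,e]\subseteq\Bbbk e$, because $\t$ scales $e$ by $\widetilde{\alpha}$ while $\u$ annihilates $e$ (no positive root can be added to $\widetilde{\alpha}$), so $R\subseteq\b$ gives $[R,e]\subseteq\Bbbk e$, as required.

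There is no real obstacle here: the only trick is the decision to replace $R$ by $R+N$, after which Lemma~\ref{borel} does the heavy lifting and the $\b$-stable one-dimensional centre of the nilradical of the resulting Borel, which lies inside both $\c_\g(N)$ and $\OO_{\rm min}$, delivers the required element $e$ for free.
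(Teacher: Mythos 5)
Your construction of $e$ follows the paper's own route almost exactly: form $L=R+N$, feed it to Lemma~\ref{borel}, and take the highest root vector spanning the $\b$-stable centre of the nilradical of the resulting Borel subalgebra. That part of the argument is sound and is not a different proof.

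There is, however, a genuine gap in the first assertion. The Jacobi computation only gives $[M,\c_\g(N)]\subseteq\c_\g(N)$, i.e.\ that $M$ \emph{normalises} $\c_\g(N)$. For $\c_\g(N)$ to be an ideal \emph{of} $M$ you also need the containment $\c_\g(N)\subseteq M$, and this is precisely where the maximality of $M$ and the simplicity of $\g$ enter; it is not automatic (for a non-maximal $M$ the centraliser of $N$ will in general not lie in $M$). The missing step, as in the paper: your computation shows $\widetilde{M}:=M+\c_\g(N)$ is a Lie subalgebra of $\g$; it normalises the nonzero subspace $N$ consisting of nilpotent elements, so $\widetilde{M}\ne\g$ because $\g$ is simple; maximality of $M$ then forces $\widetilde{M}=M$, i.e.\ $\c_\g(N)\subseteq M$. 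Without this the statement as claimed is not proved.

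A smaller point of rigour: to invoke Lemma~\ref{borel} you must check that \emph{every} element of the subspace $[L,L]\subseteq[R,R]+N$ is nilpotent, not merely the elements of each summand, since a sum of two nilpotent elements of $\g$ need not be nilpotent. One fixes this by noting that $[R,R]\cup N$ is a weakly closed set of ad-nilpotent elements spanning the subalgebra $[R,R]+N$ (Engel--Jacobson), or, as the paper does, by appealing to Jacobson's formula for $p$-th powers. The remaining steps --- nilpotent elements of $\b$ lie in $\n_+=\Lie(R_u(B))$, hence $N\subseteq\n_+$ and $e\in\c_\g(N)$; $\z(\n_+)=\Bbbk e$ with $e$ a highest root vector (using that $p$ is good); and $[\b,e]\subseteq\Bbbk e$ --- are all correct.
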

\begin{proof}
Since $N$ is nilpotent we have that $0\ne
\z(N)\subseteq \c_\g(N)$. Therefore, $\c_\g(N)$ is a nonzero Lie subalgebra of $\g$.
If $x\in M$, $c\in \c_\g(N)$ and $n\in N$ then $[[x,c],n]=[x,[c,n]]-[c,[x,n]]=0$. So $[M,\c_\g(N)]\subseteq \c_\g(N)$ which implies that $\widetilde{M}:=M+\c_\g(N)$ is a Lie subalgebra of
$\g$. Since $\g$ is a simple Lie algebra and $\widetilde{M}$ normalises $N$, it must be that $\widetilde{M}\ne \g$. As a result,
$\widetilde{M}=M$ forcing $\c_\g(N)\subseteq M$.

Let $\widetilde{R}=R+N$. Then it is immediate from Jacobson's formula for $p$-th powers that $[\widetilde{R},\widetilde{R}]\subseteq [R,R]+N$ consists of nilpotent elements of $\g$. By Lemma~\ref{borel},
there exists a Borel subalgebra  $\b$ of $\g$ containing $\widetilde{R}$. Let $B=T\cdot R_u(B)$ be the Borel subgroup of $G$ such that $\b=\Lie(B)$. The maximal torus $T$ of $B$ preserves the centre $\z(\n_+)$ of $\n_+:=\Lie(R_u(B))$.
This implies that $\z(\n_+)$ is spanned by root vectors relative to $T$. Since $p$ is a good prime for $G$
and $\n_+$ contains all simple root vectors with respect to $T$,
this yields that $\z(\n_+)=\Bbbk e$ where $e$ is a highest root vector of $\n_+$. It is well known (and easily seen) that the latter belongs to $\OO_{\rm min}$.
As $R\subseteq\widetilde{R}\subseteq \b$ and $\b$ normalises $\z(\n_+)$ it must be that $[R,e]\subseteq \Bbbk e$. Since $N\subseteq \n_+$ we also have that $e\in\c_\g(N)$. This completes the proof.
\end{proof}
\subsection{}
A Lie subalgebra of $\g$ is called {\it regular} if it contains a maximal toral subalgebra of the restricted Lie algebra $\g$.
\begin{lemma}\label{regsub}
Let $M$ be a maximal subalgebra of $\g$ with ${\rm nil}(M)\ne 0$. If $M$ is regular then it is a parabolic subalgebra of $\g$.
\end{lemma}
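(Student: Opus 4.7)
My plan is to use the regularity of $M$ to convert a Lie-theoretic statement into a combinatorial one about closed subsets of $\Phi$, and then appeal to the classical classification of maximal closed subsets of an irreducible root system.

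Since $M$ is regular it contains a maximal toral subalgebra $\t$ of $\g$; as $p$ is good for $G$, one has $\t=\Lie(T)$ for a maximal torus $T\subseteq G$. The one-dimensional root-space decomposition $\g=\t\oplus\bigoplus_{\alpha\in\Phi}\g_\alpha$ therefore restricts to
$$
M\;=\;\t\,\oplus\,\bigoplus_{\alpha\in\Psi}\g_\alpha
$$
for a unique subset $\Psi\subseteq\Phi$. Closure of $M$ under the Lie bracket forces $\Psi$ to be closed (if $\alpha,\beta\in\Psi$ and $\alpha+\beta\in\Phi$ then $\alpha+\beta\in\Psi$), and $M\ne\g$ forces $\Psi\subsetneq\Phi$. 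Maximality of $M$ then translates into maximality of $\Psi$ among proper closed subsets of $\Phi$, because any strictly larger closed $\Psi'$ would yield an intermediate subalgebra $\t\oplus\bigoplus_{\alpha\in\Psi'}\g_\alpha$.

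At this point I would invoke the classical theorem of Borel and de Siebenthal on maximal closed subsets of an irreducible reduced root system: any such $\Psi$ falls into one of two types. In type (a), $\Psi$ is \emph{parabolic}, meaning that for some choice of positive roots $\Phi^+$ one has $\Phi^+\subseteq\Psi$ and $\Phi\setminus\Psi$ consists of the negative roots whose expansion involves a single distinguished simple root; in that case $M$ is by construction $\Lie(P)$ for the corresponding maximal parabolic subgroup $P\subseteq G$, and we are done. In type (b), $\Psi=-\Psi$ is a maximal symmetric closed subsystem obtained by removing one node from the extended Dynkin diagram of $\Phi$.

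The final step is to rule out type (b) using the assumption ${\rm nil}(M)\ne 0$. In type (b), $M=\t\oplus\bigoplus_{\alpha\in\Psi}\g_\alpha$ coincides with $\Lie(H)$, where $H\subseteq G$ is the connected reductive subsystem subgroup of maximal rank with root system $\Psi$. Since $p$ is good for $G$ it is good for $H$, so $\Lie(H)$ is a reductive Lie algebra and in particular has trivial nilradical. This contradicts ${\rm nil}(M)\ne 0$, so type (a) must hold and $M=\Lie(P)$ for a maximal parabolic $P$. The one place where genuine content (as opposed to formal manipulation) enters is the reductivity of subsystem subalgebras in good characteristic; everything else is either bookkeeping with $\t$-weights or a direct invocation of the classification of maximal closed subsets.
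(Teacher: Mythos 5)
Your proof is correct and follows the same skeleton as the paper's: pass to the $\t$-weight decomposition, obtain a maximal closed subset $\Psi\subseteq\Phi$, split into the parabolic and symmetric cases, and kill the symmetric case using ${\rm nil}(M)\ne 0$. The two places where you diverge are worth recording. First, to classify the maximal closed $\Psi$ you invoke Borel--de Siebenthal directly, whereas the paper base-changes the $\Z$-span $\g_\Z(\Psi)$ to $\mathbb{C}$ and quotes Morozov's theorem for $\g_{\mathbb C}$; the combinatorial content is the same, and your route is arguably more economical since it avoids the detour through characteristic zero. Second, in the symmetric case the paper argues that the Killing form of $\g$ restricts non-degenerately to $M$, which is incompatible with a nonzero nilradical, while you identify $M$ with $\Lie(H)$ for the reductive subsystem subgroup $H$ and observe that such a subalgebra has no nonzero ideal of nilpotent elements (since any $T$-stable ideal containing a root space contains the corresponding nonzero coroot, which is semisimple); both work, and yours has the mild advantage of not needing to verify non-degeneracy of the restricted form. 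One point you gloss over that the paper treats carefully: the assertion that $M$ decomposes as $\t\oplus\bigoplus_{\alpha\in\Psi}\g_\alpha$ requires knowing that distinct roots have distinct differentials on $\t$, i.e.\ that the $\ad\t$-eigenspaces are genuinely one-dimensional; the paper secures this (and the $(\Ad T)$-stability of $M$) by appealing to the Seligman--Mills axioms, valid because $p>3$. This is not a gap in your argument, but it is the one hypothesis-dependent input hiding behind the phrase ``therefore restricts to.''
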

\begin{proof}
Let $\t$ be a maximal toral subalgebra of $\g$ contained in $M$. It follows from \cite[11.8]{Borel} that there exists a maximal torus $T$ in $G$ such that $\t=\Lie(T)$
(see also \cite[Theorem~13.3]{Hum}).
Since $p>3$, the toral subalgebra $\t$ is a classical Cartan subalgebra of $\g$ in the sense of Seligman and the Lie algebra $\g$ satisfies the Seligman--Mills axioms; see \cite[Ch.~II, \S\,3]{Sel}.
In particular, this means that all root spaces of $\g$ with respect  to $\t$ are $1$-dimensional.
As a consequence, $M$ is $(\Ad\, T)$-stable.

Let $\Phi$ be the root system of $G$ with respect to $T$. Given $\alpha\in\Phi$ we denote by $\g_\alpha$ the root subspace of $\g$ with respect to $T$. Then $\g_\alpha=\Bbbk e_\alpha$ for some root element $e_\alpha\in\g$. The preceding remark entails that there exists a subset
$\Psi$ of $\Phi$ such that $M=\t\oplus\sum_{\alpha\in\Psi}\,\Bbbk e_\alpha$.
Moreover, it follows from \cite[Ch.~II, \S\,4]{Sel}, for example, that if $\alpha,\beta\in\Psi$ and $\alpha+\beta\in\Phi$ then $\alpha+\beta\in\Psi$. In other words, $\Psi$ is a closed subset of $\Phi$ in the sense of \cite[Ch.~ VI, \S\,1, Sect.~7]{Bour}.

Let $\g_\Z$ be the Chevalley $\Z$-form of $\g$ associated with the root system $\Phi$. The above discussion shows that $M$ is obtained by base change from the $\Z$-subalgebra $\g_\Z(\Psi)$ of $\g_\Z$ spanned by all root vectors $E_\alpha\in \g_\Z$ with $\alpha\in\Psi$ and all commutators $[E_\beta,E_{-\beta}]$ with $\beta\in\Phi$. The maximality of $M$ in $\g$ now implies that
$\g_\Z(\Psi)\otimes_\Z\mathbb{C}$ is a maximal subalgebra of $\g_\mathbb{C}:=\g_\Z\otimes_\Z\mathbb{C}$. Then Morozov's theorem
on maximal subalgebras of $\g_\mathbb{C}$ yields that
the closed subset $\Psi$ of $\Phi$ is either  symmetric or parabolic. If $\Psi$ is symmetric then it is not hard to see that the restriction of the Killing form of $\g$ to
$M=\g_\Z(\Psi)\otimes_\Z\Bbbk$ is non-degenerate.
This, however, contradicts our assumption that
${\rm nil}(M)\ne 0$. Therefore, $\Psi$ must be a parabolic subset of $\Phi$. This completes the proof.
\end{proof}
\subsection{}
Given a finite dimensional semisimple Lie algebra $L$ we write $L_p$ for the $p$-closure of $L\cong \ad L$ in $\Der(L)$. This semisimple restricted Lie algebra is often referred to as the {\it minimal} $p$-{\it envelope} of $L$. To ease notation we shall often identify $L$ with $\ad L\subseteq \Der(L)$.
If $L=\bigoplus_{i\in\Z}\,L_i$ is a graded Lie algebra
then $\Der(L)$ has a natural grading, too, and Jacobson's formula for $p$-th powers implies that $L_p$ is spanned by the $p$-closure of $L_0$
in $\Der(L)$ and all $L_i^{p^j}$ with $i\ne 0$ and $j\in\Z_{\ge 0}$.
Recall that an $L$-module $V=\bigoplus_{i\in  \Z}\,V_i$ is called {\it graded} if $L_i\,.\, V_j\subseteq V_{i+j}$ for all $i,j\in\Z$. We say that
$V$ is {\it restrictable} if
there exists a restricted representation $\rho\colon\,L_p\to\gl(V)$ such that
$(\rho(x))(v)=x. v$
for all $x\in L$ and $v\in V$.

\begin{lemma}\label{graded}
Let $L$ be a finite dimensional semisimple graded Lie algebra over $\Bbbk$ and suppose that its zero component $L_0$ is a restricted subalgebra of
$L_p$. Let $V$ be a finite dimensional graded
$L$-module, write $\pi$ for the corresponding representation of $L$, and suppose that
$\pi_{\vert\,L_0}$ is a restricted representation of $L_0$. Then all composition factors of the $L$-module $V$ are graded and restrictable.
\end{lemma}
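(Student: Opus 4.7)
The plan is to induct on $\dim V$; the case $V = 0$ is vacuous. For the inductive step, I would first reduce to the case where $V$ is graded-simple (contains no proper non-zero graded $L$-submodule). Pick a non-zero graded $L$-submodule $W \subseteq V$ of smallest dimension; if $W \neq V$ then both $W$ and $V/W$ are graded $L$-modules of strictly smaller dimension whose $L_0$-actions remain restricted (as sub and quotient of a restricted representation). The induction hypothesis then provides graded, restrictable composition factors for both, and concatenation gives the desired conclusion for $V$. So we may assume $V$ itself is graded-simple.

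Next, I would prove that a graded-simple finite-dimensional $\Z$-graded $L$-module is already simple as an ungraded $L$-module. Given $\phi \in \End_L(V)$, decompose it into homogeneous components $\phi = \sum_n \phi_n$ with $\phi_n(V_i) \subseteq V_{i+n}$. The inclusions $L_j \cdot V_i \subseteq V_{i+j}$, combined with the $L$-linearity of $\phi$, force each $\phi_n$ to be $L$-linear as well. Applying the graded Schur lemma to the graded-simple $V$ shows that every non-zero homogeneous $\phi_n$ is bijective, so $\End_L(V) = \bigoplus_n \End_L(V)_n$ is a graded division algebra. A non-zero $\delta \in \End_L(V)_n$ with $n \neq 0$ would be invertible and generate the infinite-dimensional Laurent polynomial subalgebra $\Bbbk[\delta, \delta^{-1}] \subseteq \End_L(V)$, contradicting $\dim \End_L(V) < \infty$. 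Hence $\End_L(V) = \End_L(V)_0$, a finite-dimensional division algebra over the algebraically closed field $\Bbbk$, which must equal $\Bbbk$. Ordinary Schur then gives that $V$ is simple as an $L$-module, and its unique composition factor $V$ is graded by hypothesis.

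With $V$ reduced to a simple graded $L$-module, I would extend $\pi$ to $\hat\pi \colon L_p \to \gl(V)$ by setting $\hat\pi\bigl((\ad x)^{p^k}\bigr) := \pi(x)^{p^k}$ for homogeneous $x \in L$ and $k \geq 0$, extending $\Bbbk$-linearly and by brackets. The delicate point is well-definedness: suppose $\sum_j c_j (\ad x_j)^{p^{k_j}} = 0$ in $L_p \subseteq \Der(L)$, with $x_j \in L$ homogeneous. After decomposing by degree inside the graded Lie algebra $\Der(L)$, it suffices to treat the case when every term has the same degree $d$. Set $E := \sum_j c_j \pi(x_j)^{p^{k_j}} \in \gl(V)$. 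For every $z \in L$ one computes
\[
[E, \pi(z)] = \pi\Bigl(\sum_j c_j (\ad x_j)^{p^{k_j}}(z)\Bigr) = 0,
\]
so $E$ commutes with $\pi(L)$ and equals $\lambda \cdot \mathrm{id}$ for some $\lambda \in \Bbbk$ by Schur on the simple $V$. If $d \neq 0$, then $E$ is a homogeneous degree-$d$ operator whereas $\lambda \cdot \mathrm{id}$ has degree $0$, forcing $\lambda = 0$ and $E = 0$. If $d = 0$ then every $x_j \in L_0$, and the hypothesis that $L_0$ is a restricted subalgebra of $L_p$, combined with the restrictedness of $\pi|_{L_0}$, yields $(\ad x_j)^{p^{k_j}} = \ad\bigl(x_j^{[p^{k_j}]}\bigr)$ with $x_j^{[p^{k_j}]} \in L_0$; the injectivity of $\ad$ on the semisimple $L$ then reduces the relation to $\sum_j c_j x_j^{[p^{k_j}]} = 0$, which gives $E = \sum_j c_j \pi\bigl(x_j^{[p^{k_j}]}\bigr) = 0$. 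Bracket compatibility of $\hat\pi$ is built in, and the restricted-homomorphism property follows from the standard Jacobson formulas for $p$-th powers.

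The main obstacle is the well-definedness of $\hat\pi$ in the final step: the argument rests on the reduction to a simple $V$ (Step 2), which activates Schur's lemma, coupled with the $\Z$-grading argument that forces central scalars of non-zero degree to vanish; and it crucially uses the hypothesis that $L_0$ is a restricted subalgebra of $L_p$ to push through the degree-$0$ relations. Without either the simplicity reduction or the restrictedness of $L_0$ inside $L_p$, the extension $\hat\pi$ would not obviously descend from $L_p$ to a well-defined restricted operator on $V$.
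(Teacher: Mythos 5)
Your overall architecture (reduce to a graded-simple module, then extend $\pi$ to $L_p$ by hand) is reasonable, but Step~2 contains a genuine gap: from $\End_L(V)=\Bbbk$ you conclude that $V$ is simple. That is the \emph{converse} of Schur's lemma, and it is false: a non-split uniserial module with two non-isomorphic composition factors has endomorphism ring $\Bbbk$ without being simple. Your graded-division-algebra computation does correctly show $\End_L(V)=\End_L(V)_0=\Bbbk$, but this says nothing about the submodule lattice of $V$. The gap then propagates: without simplicity of $V$ you cannot invoke Schur's lemma in Step~3 to conclude that the operator $E$ is a scalar, and you cannot conclude that the honest (ungraded) composition factors of $V$ are graded, which is half of what the lemma asserts.

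The implication you need (graded-simple $\Rightarrow$ simple, for finite-dimensional $\Z$-graded modules) is true here, but it must be proved by producing an honest irreducible graded submodule, which is what the paper does: $L_{+}=\bigoplus_{i>0}L_i$ acts nilpotently on $V$, so $V_{+}=\{v\in V\,|\,L_{+}\cdot v=0\}$ is a nonzero graded subspace; taking its lowest nonzero graded component $V_{+}\cap V_r$ and applying $U(L_{-})$ yields a graded $L$-submodule that is irreducible as an ungraded module, and graded-simplicity then forces it to be all of $V$. With that repair, your Step~3 becomes a legitimate alternative to the paper's argument: the paper instead extends $\pi$ to the $p$-envelope $\mathcal{L}$ of $L$ inside $U(L)$ and shows that $\z(\mathcal{L})$ acts trivially, whereas you check well-definedness of an extension to $L_p\subseteq\Der(L)$ directly. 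Both versions turn on the same two points --- central operators of nonzero degree vanish for degree reasons, and degree-zero relations are absorbed using the hypothesis that $L_0$ is restricted in $L_p$ together with the restrictedness of $\pi|_{L_0}$ and the injectivity of $\ad$ on the semisimple $L$ --- but you should still verify the bracket compatibility you declare to be ``built in'' (it rests on $[L_p,L_p]\subseteq\ad L$), rather than assert it.
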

\begin{proof}
(a) We first show by induction on the composition length $l$ of $V$ that all composition factors of $V$ are graded. If $V$ is irreducible then, of course, there is nothing to prove. So suppose $V$ is reducible and let $L_{\pm }=\bigoplus_{i> 0}\,L_{\pm i}$. These graded Lie subalgebras of $L$ are $(\ad L_0)$-stable and act nilpotently on $V$. It follows that $V_{+}:=\{v\in V\,|\,\,L_{+}\cdot v=0\}$ is a nonzero graded subspace of $V$ and each graded component of $V_+$ is invariant under the action of
$L_0$. Let $r$ be the smallest integer such that $V_{+}\cap V_r\ne 0$ and put $W:=U(L_{-})\cdot (V_+\cap V_r)$. It is straightforward to see that $W$  is a graded subspace of $V$ invariant under the action of $L_{\pm}$ and $L_0$. Our choice of
$r$ then implies that $W$ is an irreducible
$L$-submodule of $V$. As a consequence, the quotient module $V/W$ has a natural structure of a graded $L$-module. Since the length of $V/W$ is smaller than that of $V$ and the composition factors of $V$ are independent of the choice of a composition series (up to isomorphism), the statement follows by induction on $l$.

(b) Next we show that each composition factor of $V$
is restrictable. In view of part~(a) no generality will be lost by assuming that $V$ is an irreducible
$L$-module. Let $\mathcal{L}$ be the $p$-envelope of $L$ in the universal enveloping algebra $U(L)$.
This is an infinite dimensional restricted Lie algebra with an enormous centre
$\z(\mathcal{L})$. The action of $U(L)$ gives $V$
a natural structure of a restricted $\mathcal{L}$-module.
Let $\rho\colon\,\mathcal{L}\to \gl(V)$ denote the corresponding representation of $\mathcal{L}$.
Since $V$ is an irreducible $L$-module,
 $\z(\mathcal{L})$ acts on $V$ by scalar linear operators. Now
define
$\z'(\mathcal{L}):=\z(\mathcal{L})\cap(\ker\rho)$
and put $\tilde{L}_p:=\mathcal{L}/\z'(\mathcal{L})$.
As $L$ is semisimple, $\tilde{L}_p$ is a $p$-envelope of $L$ and its centre $\z=\z(\mathcal{L})/\z'(\mathcal{L})$ has dimension $\le 1$. Moreover, $V$ is a restricted $\tilde{L}_p$-module
and $\mathcal{L}/\z(\mathcal{L})\cong L_p$ as restricted Lie algebras; see \cite[Theorem~1.1.7]{Str04}. 

We write $x\mapsto x^{[p]}$ for the $[p]$-power map of $\mathcal{L}$.
The grading of $L$ induces that on $U(L)$ and its Lie subalgebra $\mathcal{L}=
\sum_{i\in \Z}\sum_{j\ge 0}L_i^{[p]^j}$. We stress that each $L_i^{[p]^j}$ consists of graded elements of degree
$ip^j$. It is straightforward to check that $\z(\mathcal{L})$
is a graded subspace of $\mathcal{L}$. Since $V$ is a graded
$U(\mathcal{L})$-module the subspace $\z'(\mathcal{L})$ must be graded as well. As all graded components $\z(\mathcal{L})_i$ with $i\ne 0$ act on $V$ by nilpotent transformations they  
lie in $\z'(\mathcal{L})$ due to the irreducibility of $V$. This yields that $\z=\Bbbk \rho(z)$ for some $z\in \z(\mathcal{L})_0$. The above description of 
$\z(\mathcal{L})_0\subseteq \sum_{j\ge 0}L_0^{[p]^j}$ then 
shows that $z=z_0+z_1^{[p]}+\cdots+z_k^{[p]^k}$ for some $z_0,z_1,\ldots, z_k\in L_0$. Since $L_0$ is a restricted Lie subalgebra of $L_p$ and $V$ is a restricted $L_0$-module
we have that $\rho(z_i^{[p]^i})=\rho(z_i^{p^i})$ for all $0\le i\le k$ where $x\mapsto x^p$ stands for the $p$-th power map
of $L_p$. As each element $z_i^{[p]^i}-z_i^{p^i}\in\mathcal{L}$ is central, it lies in $\z'(\mathcal{L})$. As $z$ is a central element of $\mathcal{L}$, so too is $z':=\sum_{i=0}^kz^{p^i}$. Since $z'\in L$ and $L$ is semisimple this forces $z'=0$ 
But then $\rho(z)=\rho(z')=0$ implying $\z'(\mathcal{L})=\z(\mathcal{L})$. As a result $\tilde{L}_p\cong L_p$ as restricted Lie algebras. This completes the proof.
\end{proof}
\subsection{} In order to deal with Hamiltonian Lie algebras
which will come to complicate things in the next section
we must look very closely at the $G$-orbits of certain special toral elements of $\g$.
\begin{defn} Let $d$ be a positive integer. A toral element $h\in\g$ is said to be
$d$-{\it balanced} if $\dim \g(h,i)=\dim \g(h,j)$ for all $i,j\in\F_p^\times$ and all eigenspaces $\g(h,i)$ with $i\ne 0$ have dimension divisible by $d$.
\end{defn}
If $h$ is a $d$-balanced toral element of $\g$ then
\begin{equation}\label{killing}
\kappa(h,h)\,=\,{\rm tr}(\ad h)^2\,=\,\textstyle{\sum}_{i=1}^{p-1}\,\dim\g(h,i)\cdot i^2\,=\,\dim\g(h,1)\cdot {(p-1)p(2p-1)}/6\,=\,0.
\end{equation}
\begin{prop}\label{balanced}
Let $G$ be an exceptional algebraic $\Bbbk$-group, where $p={\rm char}(\Bbbk)$ is a good prime for $G$, and let $h$ be a nonzero $p$-balanced toral element of $\g=\Lie(G)$. Then one of the following cases occurs:
\begin{itemize}
\item[(i)\ ] $G$ is of type $\rm E_6$, $p=5$,\,\, the root system of $G_h$ has type ${\rm A}_3$, and $\dim \g_h=18$;

\smallskip

\item[(ii)\,] $G$ is of type ${\rm E}_7$, $p=5$, \
the root system of $G_h$ has type ${\rm D}_4{\rm A}_1$, and $\dim\g_h=33$;

\smallskip

\item[(iii)\,] $G$ is of type ${\rm E}_8$, $p=7$, \
the root system of $G_h$ has type ${\rm E}_6$, and $\dim\g_h=80$;

\smallskip

\item[(iv)\,] $G$ is of type ${\rm E}_8$, $p=7,\,$ \
the root system of $G_h$ has type ${\rm D}_4{\rm A}_2$, and $\dim\g_h=38$;

\smallskip

\item[(v)\,] $G$ is of type ${\rm E}_8$, $p=11,\,$
the root system of $G_h$ has type ${\rm A}_4$, and $\dim\g_h=28$.

\smallskip

\item[(vi)\,] $G$ is of type ${\rm F}_4$, $p=7$,\ \,\,
the root system of $G_h$ has type ${\rm A}_2$, and $\dim\g_h=10$.

\smallskip

\item[(vii)\,] $G$ is of type ${\rm F}_4$, $p=5$,\ \,\,
the root system of $G_h$ has type ${\rm B}_2$, and $\dim\g_h=12$.
\end{itemize}
Furthermore, in all cases except (vi) the nonzero $p$-balanced toral elements
of $\g$ are conjugate under the adjoint action of $G$, whereas in case (vi) there are two $p$-balanced $G$-orbits.
\end{prop}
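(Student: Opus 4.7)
The plan is to reduce to a Weyl-group orbit enumeration on a maximal torus and then to enforce the uniform-distribution condition explicitly. By \cite[Theorem~13.3]{Hum} any toral element of $\g$ is $G$-conjugate to an element of $\t = \Lie(T)$ for a fixed maximal torus $T$; fixing such a $T$ with root system $\Phi$, each root space $\g_\alpha$ is a one-dimensional eigenspace of $\ad h$ with eigenvalue $\alpha(h)\in\F_p$. So setting $\Phi_i:=\{\alpha\in\Phi:\alpha(h)=i\}$ we have $\dim\g(h,i)=|\Phi_i|$ for $i\in\F_p^\times$ and $\dim\g_h=\rk G+|\Phi_0|$. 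The $p$-balanced hypothesis becomes $|\Phi_1|=\cdots=|\Phi_{p-1}|=:N$ with $p\mid N$, which, together with $|\Phi|=|\Phi_0|+(p-1)N$, gives the numerical constraint
$$p(p-1)\mid |\Phi|-|\Phi_0|.$$
For each exceptional type $G$ and each good prime $p$ this pins $|\Phi_0|$ to a short list.

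For each admissible $|\Phi_0|$ I would then enumerate, via Borel--de Siebenthal on the extended Dynkin diagram, the closed subsystems of $\Phi$ of that cardinality and test the distribution of $\Phi\setminus\Phi_0$ among the $\F_p^\times$-classes. The subtlety is that the numerical constraint is genuinely weaker than $p$-balancedness: for example, removing the branch node $\alpha_2$ from $E_6$ at $p=7$ gives a Levi subsystem of type $A_5$ with $|\Phi_0|=30=72-6\cdot 7$, but since the coefficient of $\alpha_2$ in any root of $E_6$ lies in $\{0,\pm 1,\pm 2\}$ the distribution becomes $(|\Phi_0|,|\Phi_1|,\ldots,|\Phi_6|)=(30,20,1,0,0,1,20)$, which fails uniformity; so this numerically plausible candidate is eliminated. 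A case-by-case diagram check rules out all such spurious candidates and leaves precisely the seven configurations (i)--(vii); for each surviving case, uniform distribution is verified directly from the $\Z$-grading of $\g$ induced by $h$ (equivalently, from an explicit Kac-coordinate choice on the extended Dynkin diagram summing, against the highest-root marks, to $p$), yielding the stated value $\dim\g_h=\rk G+|\Phi_0|$.

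For the conjugacy assertion I would analyse the Weyl-group orbits of nonzero $p$-balanced elements of $\t$ with fixed centralizer type. In each of the cases (i)--(v) and (vii) the combined action of $W$ and the rescaling $h\mapsto\lambda h$ for $\lambda\in\F_p^\times$ (which preserves $p$-balancedness, since it permutes the nonzero eigenspaces among themselves) collapses everything to a single $G$-orbit. Case (vi) is exceptional because $F_4$ admits two $W$-inequivalent subsystems of type $A_2$, one of long roots and one of short roots, which are not interchanged by $W(F_4)$; both yield $p$-balanced toral elements at $p=7$, and hence two distinct $G$-orbits. The principal obstacle lies precisely here: ruling out every numerically plausible but non-uniform centralizer type (the $A_5$ example above illustrates why the coarse divisibility condition alone does not suffice), and carefully unpacking the long/short root-length distinction in $F_4$ to confirm the orbit count in case (vi).
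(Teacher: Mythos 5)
Your proposal is correct in outline and arrives at the same reduction (conjugating $h$ into $\t$, reading off $\dim\g(h,i)=|\Phi_i|$, and extracting the congruence $p(p-1)\mid |\Phi|-|\Phi_0|$), but the middle of your argument takes a genuinely different route from the paper. Where you propose to enumerate all closed subsystems of the admissible cardinality via Borel--de Siebenthal and then test the eigenvalue distribution root by root, the paper filters the candidate Levi types through nilpotent orbit theory: it shows that the cone over the orbit of $h$ is the closure of the Richardson orbit $\OO_0={\rm Ind}_{\l}^{\g}\{0\}$, so that $\dim\OO_0$ must be divisible by $p(p-1)$ \emph{and} $\OO_0$ must lie in the restricted nullcone $\N_p(\g)$; combined with the tables of induced orbits, rigidity, and the vanishing $\kappa(h,h)=0$ forced by balancedness, this kills most spurious candidates (including your ${\rm A}_5$ example in ${\rm E}_6$ at $p=7$, which the paper eliminates by the Killing-form argument rather than by computing the distribution $(30,20,1,0,0,1,20)$). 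The paper also verifies uniformity in the surviving cases without computing distributions: the uniqueness of the Kac coordinates for the given centraliser type forces all powers $\sigma^i$, $1\le i\le p-1$, of the associated order-$p$ element to be conjugate, whence all nonzero eigenspaces have equal dimension. Your approach buys elementarity (no sheets, no induced orbits, no $\N_p(\g)$ data) at the price of heavier case-by-case computation; the paper's buys shorter case analysis at the price of importing \cite{PSt}, \cite{dG-E}, \cite{McN03} and \cite{Law}. One point you should tighten: the rescaling $h\mapsto\lambda h$, $\lambda\in\F_p^\times$, preserves $p$-balancedness and the centraliser type but does not by itself stay inside a single $G$-orbit --- you must still show that $h$ and $\lambda h$ are $W$-conjugate, which is exactly what the uniqueness (up to ${\rm Stab}_W(\widetilde{\Pi})$) of the Kac coordinates delivers; your planned enumeration of $W$-orbits on $\t^{\rm tor}$ with fixed centraliser type does supply this, but the phrase ``collapses everything'' elides the actual verification, which is where the single-orbit claim in cases (i)--(v), (vii) and the two-orbit count in case (vi) are really decided.
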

\begin{proof}
(a) We may assume without loss of generality that $G$ is a group of adjoint type.
Let $\Phi$ be the root system of $G$ with respect to a maximal torus $T$ of $G$ and let $\Pi=\{\alpha_1,\ldots,\alpha_\ell\}$ be a basis of simple roots
of $\Phi$, so that $\ell=\dim T$. Let $\Phi^+$ be the positive system associated with $\Pi$. Any root $\gamma\in\Phi^+$ can be uniquely expressed as   $\gamma=\sum_{i=1}^\ell\,\nu_i(\gamma)\alpha_i$ for some $\nu_i(\gamma)\in\Z_{\ge 0}$. In what follows we always use Bourbaki's numbering of simple roots in $\Pi$; see \cite[Planches~I--IX]{Bour}.

Since $h$ is contained in a maximal toral subalgebra of $\g$ and all such subalgebras are
$(\Ad\, G)$-conjugate, we may assume that $h\in\Lie(T)$; see \cite[11.8]{Borel} of \cite[Theorem~13.3]{Hum}.
Then $\g_h$ is $(\Ad\, T)$-stable. Since $p$ is a good prime for $G$ we may assume further that $\g_h$
is the standard Levi subalgebra of $\g$ associated with a subset $\Pi_0$ of
$\Pi$. Furthermore, there exist $t_1,\ldots, t_\ell\in \t$ such that $({\rm d}\alpha_i)(t_j)=\delta_{i,j}$ for all $1\le i,j\le\ell$. These elements form a $\Bbbk$-basis of $\t$.
As a consequence, the centre of $\g_h$ has a $\Bbbk$-basis consisting
of all $t_i$ with $1\le i\le \ell$ such that $({\rm d}\beta)(t_i)=0$ for all $\beta\in\Pi_0$.

Let $\Psi$ be the root system of $\g_h$ relative to $T$ and put $\Psi^+:=\Psi\cap \Phi^+$. Then $\dim \g_h=\ell+2|\Psi^+|$ and
\begin{equation}\label{equiv}\dim\g-\dim\g_h
\,=\,\textstyle{\sum}_{i\in\F_p^\times}\,\dim\g(h,i)\,\equiv\, 0
\mod p(p-1),\end{equation}
by our assumption on $h$. Hence $|\Phi^+|\equiv |\Psi^+|\mod p(p-1)/2$.

(b) Let $G_{\mathbb C}$ be the complex group of adjoint type with root system $\Phi$. We may assume without loss of generality that $T$
is obtained by base change from a maximal torus $T_{\mathbb C}$ in $G_{\mathbb C}$.
In what follows we are going to rely on the well known analogy between toral elements of $\g$ and elements of order $p$ in $G_{\mathbb C}$; see \cite{Serre}.
This will enable us to use extended Dynkin diagrams and Kac coordinates for labelling toral $G$-orbits in $\g$.

More precisely, let $\zeta$ be a primitive $p$-th root of unity in $\mathbb C$.
Any toral element  of $\t$ is a linear combination of semisimple root vectors
$({ \rm d}\beta^\vee)(1)\in \Lie(\beta^\vee(\Bbbk^\times))\subset T$ with coefficients in $\F_p=\Z/p\Z$. If $i\in\F_p^\times$ then $\g(h, i)$ is spanned by the root spaces
$\g_\gamma$ with $({\rm d}\gamma)(h)=i$.
Let $\g_\Z=\t_\Z\oplus\big(\bigoplus_{\gamma\in\Phi}\, \Z e_\gamma\big)$ be an admissible $\Z$-lattice in $\g_\mathbb{C}$ such that $\t=\t_\Z\otimes_\Z\Bbbk$ and $\g_\gamma=\Bbbk (e_\gamma\otimes1)$ for all $\gamma\in\Phi$.
Since $G_{\mathbb C}$ is a group of adjoint type  there exists a unique $\sigma=\sigma(h)\in T_{\mathbb C}$ with
$\sigma^p=1$ such that $$\g_\mathbb{C}(\sigma,\zeta^i)\,=\,\g_\Z(\sigma, \zeta^i)\otimes_\Z\mathbb{C}\ \ \mbox{ and }\  \ \g(h,i)\,=\,\g_\Z(\sigma, \zeta^i)\otimes_\Z\Bbbk\ \qquad\ \big(\forall\, i\in(\Z/p\Z)^\times\big)$$
where $\g_\mathbb{C}(\sigma, \zeta^i)$ is the $\zeta^i$-eigenspace of $\sigma$ and $\g_\Z(\sigma, \zeta^i)$ is
$\Z$-span of all $e_\gamma$ with $\gamma(\sigma)=\zeta^i$.

Let $P(\Phi^\vee)$ be the weight lattice of the dual root system $\Phi^\vee$ and let  $W=N_G(T)/T\cong N_{G_{\mathbb C}}(T_{\mathbb C})/T_{\mathbb C}$ be the Weyl group of $G$. Since $G_{\mathbb C}$ is a group of adjoint type the lattice of cocharacters $T_{\mathbb C}$ coincides with $P(\Phi^\vee)$ which implies that the $\F_p W$-module of all
elements of order $p$ in $T_{\mathbb C}$ identifies with $P(\Phi^\vee)\otimes_\Z \F_p$. Since $p>3$ the latter space is $W$-equivariantly isomorphic to $\t^{\rm tor}$, the $\F_p$-subspace of all toral elements of $\t$.
From this it follows that there is a natural bijection between the toral $G$-orbits in $\g$ and the conjugacy classes of elements of order $p$ in $G_{\mathbb C}$; see \cite{Serre} for more detail.

Let $\widetilde{\alpha}=\sum_{\alpha\in\Pi} n_\alpha\alpha$
be the highest root in $\Phi^+$ and put $\widetilde{\Pi}:=\Pi\sqcup\{\alpha_0\}$ where $\alpha_0=-\widetilde{\alpha}$.
To any nonzero collection of non-negative integers ${\bf a}=(a_\alpha\,|\,\alpha\in\Pi)$ such that $a_\alpha<p$ for all $\alpha\in\Pi$ and $\sum_{\alpha\in\Pi}a_\alpha n_\alpha\le p$ we attach a unique element $\sigma_{\bf{a}}\in T_{\mathbb C}$ of order $p$ by imposing that $\sigma_{\bf{a}}(e_\alpha)=\zeta^{a_\alpha}e_\alpha$ for all
$\alpha\in \Pi$ (here $e_\gamma$ stands for a root vector of $\g_{\mathbb C}$ corresponding to $\gamma\in\Phi$).
It is well known that any element of order $p$ in $G$ is conjugate to one of the $\sigma_{\bf a}$'s. If $\sigma$ is conjugate to $\sigma_{\bf a}$ then the collection $(a_\alpha\,|\,\alpha\in \widetilde{\Pi})=(a_{\alpha_0},{\bf a})$
with $a_{\alpha_0}=p-\sum_{\alpha\in\Pi} a_\alpha n_\alpha$ is sometimes referred to as {\it Kac coordinates} of $\sigma$. It is known that two different Kac coordinates represent the same conjugacy class in $G_{\mathbb C}$ if and only if one can be obtained from the other by  applying a suitable element from the group ${\rm Stab}_W(\widetilde{\Pi})\cong Z(\widetilde{G}_{\mathbb C})$ (here $\widetilde{G}_{\mathbb C}$ stands for the simple simply connected algebraic group over $\mathbb C$ with root system $\Phi$).
The action of  ${\rm Stab}_W(\widetilde{\Pi})$ on
$\widetilde{\Pi}$
is described in \cite[Planches~I--IX]{Bour} (recall that our numbering of simple roots is compatible with that of {\it loc.\,cit.}).
It is immediate from the above that the set $\{\alpha\in\widetilde{\Pi}\,|\,a_\alpha=0\}$ forms a basis of simple roots of the fixed-point algebra
$\g_{\mathbb C}^{\sigma_{\bf a}}=\{x\in\g_{\mathbb C}\,|\,\, \sigma_{\bf a}(x)=x\}$. To ease notation we set $a_i:=a_{\alpha_i}$ for $0\le i\le \ell$.

(c) Suppose for a moment that $G$ is an arbitrary standard connected reductive group over $\Bbbk$. Recall that a nilpotent $G$-orbit in $\g=\Lie(G)$ is called {\it rigid} if it cannot be
obtained by Lusztig--Spaltenstein induction from a proper Levi subalgebra of $\g$. In order to reduce the number of cases that we have to investigate closely we shall rely on the the theory of sheets in $\g$ as presented in \cite{PSt}. Put $\l:=\g_h$ and let $L$ be the standard Levi subgroup of $G$ with $\l=\Lie(L)$. Write $L_{\mathbb C}$ for the standard Levi subgroup of $G_{\mathbb C}$
associated with the subset $\Pi_0$ of $\Pi$.
We adopt the notation introduced in \cite{PSt} and, in particular, write $\z(\l)_{\rm reg}$ for the set of all
elements $z$ in the centre $\z(\l)$ for which
$\g_z=\l$. This is a nonempty Zariski open subset of $\z(\l)$.
Since $\{0\}$ is a rigid nilpotent orbit
in $\l$ there exists a unique sheet $\mathcal{S}$ of $\g$ whose open decomposition class coincides with $\mathcal{D}(\l,0)=
(\Ad\, G)\cdot\z(\l)_{\rm reg}$; see \cite[Theorem~2.8]{PSt}.
Furthermore, in view of \cite[2.5]{PSt} the unique nilpotent orbit contained in $\mathcal{S}$ coincides with $\OO_0:={\rm Ind}_\l^\g\,\{0\}$, the Richardson orbit associated with the Levi subalgebra $\l$. It follows from \cite[Theorem~1.4]{PSt} that the Dynkin label of
$\OO_0$ coincides with that of the
Richardson orbit in $\g_{\mathbb C}$ associated with $\l_{\mathbb C}$. The latter can be read off from the tables in \cite{dG-E}.

Since $h$ is a semisimple element of $\g$ the adjoint $G$-orbit $\OO$ of $h$ is Zariski closed in $\g$. Let $\mathbb{K}\OO$ denote the cone associated with $\OO$, a $G$-stable, Zariski closed, conical subset of $\g$; see \cite[5.1]{PSk}, for example. It is well known that all irreducible components of $\mathbb{K}\OO$ have dimension equal to that of $\OO$.
Since $x^{[p]}-x=0$ for all $x\in\OO$ and
the $p$-th power map of $\g$ is a morphism given by
a collection of homogeneous polynomial functions of degree $p$ on $\g$ we have the inclusion $\Bbb{K}\OO\subseteq \N_p(\g)$. On the other hand,
it follows from \cite[Lemma~5.1]{PSk} that
$\Bbb{K}\OO\,=\,\overline{\Bbbk^\times\OO}\setminus \Bbbk^\times\OO$.
Since $\Bbbk^\times\OO\subseteq \mathcal{D}(\l,0)\subset \mathcal{S}$ and $\mathcal{S}$ contains a unique nilpotent orbit, we now deduce that
$\Bbb{K}\OO$ coincides with the Zariski closure of
the Richardson orbit $\OO_0$. As a consequence,
$\OO_0\subseteq \N_p(\g)$. We let $e$ be any element of $\OO_0$. Since $p$ is a good prime for $G$ we have that
$\dim \g_e=\dim G_e=\dim G_h=\dim \g_h$. In view of (\ref{equiv}) this shows that  $\dim \OO_0$ is divisible by $p(p-1)$.

(d) Suppose $G$ is of type ${\rm E}_6$ and let $h$, $L$, $\l$, $\Psi$, $\OO_0$, $e$ be as above. If
$p\ge 11$ then $p(p-1)>78=\dim \g$. So $\g$ cannot contain orbits of dimension divisible by $p(p-1)$.

Suppose $p=7$. Then it must be that $\dim \OO_0=42$. There is only one nilpotent orbit of dimension $42$ in $\g$ and its Dynkin label is ${\rm A}_2$. In view of
\cite[p.~267]{dG-E} this yields that $\Psi$ has type ${\rm A}_5$. But then $\l=\z(\l)\oplus [\l,\l]$ and $\z(\l)=\Bbbk h$. Since the restriction of $\kappa$ to $\l$ is non-degenerate and $\z(\l)\subset [\l,\l]^\perp$, this forces $\kappa(h,h)\ne 0$. Since this contradicts (\ref{killing}), the case $p=7$ cannot occur.

Suppose $p=5$. The $p(p-1)=20$. As there are no nilpotent orbits of dimension $20$ in $\g$ we have that $\dim \OO_0\in\{40, 60\}$. The only nilpotent orbit of dimension $40$ in $\g$ has Dynkin label $3{\rm A}_1$.
Since this orbit is rigid by \cite[Theorem~1.3]{PSt}, it must be that $\dim \OO_0=60$. There are two nilpotent orbits of dimension $60$ in $\g$ and their Dynkin labels are
${\rm A}_4$ and ${\rm D}_4$. Both orbits are Richardson by \cite[p.~267]{dG-E}. However, if $e\in\OO({\rm D}_4)$ then $e^{[5]}\ne 0$ by \cite[Theorem~35]{McN03} and \cite[Table~6]{Law}. So it must be that $e\in\OO({\rm A}_4)$. Then \cite[p.~267]{dG-E} shows that
$\Psi$ has type ${\rm A}_3$.
Let $\sigma=\sigma(h)\in G_{\mathbb C}$ be the element of order $p$ attached to $h$ in part~(b) and let $\sigma_{\bf a}\in T_{\mathbb C}$ be a canonical representative of the conjugacy class of $\sigma$. Our discussion at the end of part~(b) shows that
the set $\{\alpha\in\widetilde{\Pi}\,|\,\,a_\alpha=0\}$
forms a subdiagram of type ${\rm A}_3$ of the extended Dynkin diagram  $\widetilde{{\rm E}}_6$.
Since $a_0=5-a_1-2a_2-2a_3-3a_4-2a_5-a_6\ge 0$ and $a_i\in \Z_{\ge 0}$ for all $i$, we essentially have only one option here, namely,
\begin{equation}\label{e6}(a_0,a_1,a_2, a_3, a_4, a_5, a_6)\,=\,(1,1,1,0,0,0,1).\end{equation}
The other two options that we have can be obtained from this one
by applying a suitable symmetry of $\widetilde{\Pi}$ coming from the group ${\rm Stab}_W(\widetilde{\Pi})\cong \Z/3\Z$. The uniqueness of $\sigma_{\bf a}$ implies that all powers $\sigma^i$  with $1\le i\le 4$ are $G$-conjugate forcing
$\dim_{\mathbb C}\,\g_{\mathbb C}(\sigma, \zeta^i)=\dim_{\mathbb C}\,\g_{\mathbb C}(\sigma, \zeta^j)=15$ for all $1\le i,j\le 4$. Our discussion at the beginning of part~(b) now shows that in characteristic $5$ the Lie algebra $\g$ admits a unique $G$-orbit consisting of $p$-balanced toral elements. This $p$-balanced orbit is listed in part~(i) of the proposition.

(e) Suppose $G$ is of type ${\rm E}_7$. If
$p\ge 13$ then $p(p-1)>133=\dim \g$. So $\g$ cannot contain orbits of dimension divisible by $p(p-1)$.

If $p=11$ then $\dim \OO_0=110$ and $\dim \g(h,i)=11$ for all $i\in\F_p^\times$. There are two nilpotent orbits of dimension $110$ in $\g$, but only one of them is Richardson; see \cite[p.~269]{dG-E}. So $e$ must have Dynkin label ${\rm E}_6({\rm a}_3)$ and therefore $\Psi$ must be of type ${\rm A}_3{\rm A}_1^2$. Replacing $h$ by $w(h)$ for some $w\in W$ if necessary we may assume that
$h=\lambda t_3+\mu t_6$ for some $\lambda,\mu\in \F_p^\times$. Set $$\Phi^+_{3,6}\,:=\,\{\gamma\in\Phi^+\,|\,\,
\nu_3(\gamma)=1 \mbox{ and }\,\, \nu_6(\gamma)=0\}.$$
It is straightforward to check that $|\Phi_{3,6}^+|=12$. Since the $\Bbbk$-span of the root vectors $e_\gamma$ with $\gamma\in\Phi_{3,6}^+$ is contained in a single
eigenspace $\g(h,k)$ with $k\in\F_p^\times$ we see
that $h$ cannot be $p$-balanced. So the case where $p=11$ cannot occur.

Suppose $p=7$. Then $\dim\OO_0\in\{42, 84, 126\}$.
If $\dim \OO_0=126$ then $\OO_0=\OO_{\rm reg}$, the regular nilpotent orbit of $\g$. Since $\OO_{\rm reg}\not\subset\N_p(\g)$ by \cite[Theorem~35]{McN03} and \cite[Table~8]{Law}, this case cannot occur.
As $\N(\g)$ has no orbits of dimension $42$  it must be that $\dim\OO=84$ and $\dim\g(h,i)=14$ for all $i\in\F_p^\times$. There are three orbits of this dimension in $\N(\g)$, but only two of them are Richardson; see \cite[p.~270]{dG-E}. Their Dynkin labels are
$2{\rm A}_2$ and ${\rm A}_2+3{\rm A}_1$, In the first case $\Psi$ has type ${\rm D}_5A_1$ and hence $\z(\l)=\Bbbk h$. Since $p\ne 2$ we have a direct sum decomposition $\l=\z(\l)\oplus [\l,\l]$. Arguing as at the beginning of part~(d) it is easy to see that
$\kappa(h,h)\ne 0$. Since this contradicts (\ref{killing}) we deduce that the present case is impossible. If $\OO_0=\OO({\rm A}_2+3{\rm A}_1)$ then $\Psi$ has type ${\rm A}_6$. In this case we also have the equality $\z(\l)=\Bbbk h$, but cannot argue as before because $ [\l,\l]\cong \sl_7$ contains $\z(\l)$. Instead, we observe that $h=\lambda
t_2$ for some $\lambda\in \F_p^\times$. Since the set
$\{\gamma\in\Phi^+\,|\,\,
\nu_2(\gamma)=1\}$ has cardinality $35>14$ we see that $h$ cannot be $p$-balanced in $\g$. Therefore, $p\ne 7$.

Suppose $p=5$. Then $\dim\OO_0\in\{20, 40, 60, 80,100, 120\}$. Since there are no orbits of dimension $20$, $40$, $60$ or $80$ in $\N(\g)$ it must be that $\dim \OO_0=100$ or $\dim\OO_0=120$. If $\dim \OO_0=120$ then
$\OO_0=\OO({\rm E}_7({\rm a}_3))$ or $\OO_0=\OO({\rm E}_6)$. Since neither of these orbits lies in $\N_p(\g)$ by \cite[Theorem~35]{McN03} and \cite[Table~8]{Law}, we obtain that $\dim \OO_0=100$ and $\dim \g(h,i)=20$ for all $i\in\F_p^\times$. There are two orbits of that dimension in $\g$ and their Dynkin labels are
${\rm A}_4$ and ${\rm A}_3+{\rm A}_2+{\rm A}_1$.
If $\OO_0=\OO({\rm A}_3+{\rm A}_2+{\rm A}_1)$ then \cite[p.~269]{dG-E} yields that $\Psi$ has type ${\rm A}_4{\rm A}_2$ and $\z(\l)=\Bbbk t_5$. As a consequence, $h=\lambda t_5$ for some $\lambda\in\F_p^\times$. Since the set $\{\gamma\in\Phi^+\,|\,\,\nu_5(\gamma)=1\}$ has cardinality $30>20$, the element $h$ cannot be
$p$-balanced in the present case.
Now suppose $\OO_0=\OO({\rm A}_4)$. Then \cite[p.~267]{dG-E} shows that $\Phi$ is of type
${\rm D}_4{\rm A}_1$.
Let $\sigma=\sigma(h)\in G_{\mathbb C}$ be the element of order $p$ attached to $h$ in part~(b) and let $\sigma_{\bf a}\in T_{\mathbb C}$ be a canonical representative of the conjugacy class of $\sigma$. Our discussion at the end of part~(b) shows that
the set $\{\alpha\in\widetilde{\Pi}\,|\,\,a_\alpha=0\}$
forms a subdiagram of type ${\rm D}_4{\rm A}_1$ on the extended Dynkin diagram $\widetilde{{\rm E}}_7$.
Since $a_0=5-2a_1-2a_2-3a_3-4a_4-3a_5-2a_6-a_7\ge 0$ and $a_i\in \Z_{\ge 0}$ for all $i$, we essentially have only one option here, namely,
\begin{equation}\label{e7}(a_0,a_1,a_2, a_3, a_4, a_5, a_6,a_7)\,=\,(1,1,0,0,0,0,1,0).\end{equation}
The other option that we have can be obtained
by applying a suitable symmetry of $\widetilde{\Pi}$ coming from the group ${\rm Stab}_W(\widetilde{\Pi})\cong \Z/2\Z$. As at the end of part~(d), the uniqueness of $\sigma_{\bf a}$ implies that all powers $\sigma^i$  with $1\le i\le 4$ are $G$-conjugate. Then
$\dim_{\mathbb C}\,\g_{\mathbb C}(\sigma, \zeta^i)=\dim_{\mathbb C}\,\g_{\mathbb C}(\sigma, \zeta^j)=20$ for all $1\le i,j\le 4$ (of course, one can also check this by a direct computation). Our discussion at the beginning of part~(b) now shows that in characteristic $5$ the Lie algebra $\g$ admits a unique $G$-orbit consisting of $p$-balanced toral elements. This proves statement~(ii).

(f) Suppose $G$ is of type ${\rm E}_8$. If $p\ge 17$ then $p(p-1)>248$, hence $\g$ does not have orbits of dimension divisible by $p(p-1)$. If $p=13$ then it must be that $\dim \OO_0=156$. There is only one orbit of dimension $156$ in $\N(\g)$ and its Dynkin label is $2{\rm A}_2$. Then \cite[p.~275]{dG-E} yields that
$\Psi$ has type ${\rm D}_7$ and $\z(\l)=\Bbbk t_1$. Since $\l=\z(\l)\oplus [\l,\l]$ we then have
$\kappa(h,h)\ne 0$ contrary to (\ref{killing}). Therefore, $p\le 11$.

Suppose $p=11$. Then $\dim\OO_0=220$ and $\dim \g(h,i)=22$ for all $i\in\F_p^\times$ because $\N(\g)$ has no orbits of dimension $110$. There are two orbits of dimension $220$ in $\N(\g)$ and their Dynkin labels are ${\rm E}_7({\rm a}_3)$ and  ${\rm E}_8({\rm b}_6)$.
Both orbits are Richardson and it follows from \cite[p.~272]{dG-E} that in the second case
$\Psi$ has type ${\rm A}_3{\rm A}_2{\rm A}_1$ and $h=\lambda t_4+\mu t_8$ for some $\lambda,\mu\in\F_p^\times$. Set $$\Phi^+_{4,8}\,:=\,\{\gamma\in\Phi^+\,|\,\,
\nu_4(\gamma)=1 \mbox{ and }\,\, \nu_8(\gamma)=0\}.$$
It is easy to see that $|\Phi_{4,8}^+|=24>22$. Since the $\Bbbk$-span of the root vectors $e_\gamma$ with $\gamma\in\Phi_{4,8}^+$ is contained in a single
eigenspace of $\ad h$ we deduce that the case where  $\OO_0=\OO({\rm E}_8({\rm b}_6))$ cannot occur.
If $\OO_0=\OO({\rm E}_7({\rm a}_3))$ then
\cite[p.~272]{dG-E} shows that $\Psi$ has type ${\rm A}_4$.
Let $\sigma=\sigma(h)\in G_{\mathbb C}$ be the element of order $p$ attached to $h$ in part~(b) and let $\sigma_{\bf a}\in T_{\mathbb C}$ be a canonical representative of the conjugacy class of $\sigma$. We know from part~(b) that the set $\{\alpha\in\widetilde{\Pi}\,|\,\,a_\alpha=0\}$
forms a subdiagram of type ${\rm A}_4$ of the extended Dynkin diagram $\widetilde{\rm E}_8$.
Since $a_0=11-2a_1-3a_2-4a_3-6a_4-5a_5-4a_6-3a_7-2a_8\ge 0$ and $a_i\in \Z_{\ge 0}$ for all $i$, we have only one option here, namely,
\begin{equation}\label{e8-11}(a_0,a_1,a_2, a_3, a_4, a_5, a_6,a_7,a_8)\,=\,(1,1,1,0,0,0,0,1,1).\end{equation}
The uniqueness of $\sigma_{\bf a}$ implies that all powers $\sigma^i$  with $1\le i\le 10$ are $G$-conjugate implying that
$\dim_{\mathbb C}\,\g_{\mathbb C}(\sigma, \zeta^i)=\dim_{\mathbb C}\,\g_{\mathbb C}(\sigma, \zeta^j)=22$ for all $1\le i,j\le 10$. Arguing as before we now conclude that in characteristic $11$ the Lie algebra $\g$ admits a unique $G$-orbit consisting of $p$-balanced toral elements. This proves~(v).

Suppose $p=7$. Since $\N(\g)$ has no orbits of dimension $42$, $84$ and $126$, either $\dim\OO_0=168$ or $\dim\OO_0=210$. There are two orbits of dimension $168$ in $\N(\g)$, but one of them is rigid by
\cite[Theorem~1.3]{PSt}. So if $\dim\OO_0=168$ then $\dim\g(h,i)=28$ for all $i\in\F_p^\times$ and $\OO_0=\OO({\rm D}_4)$. In that case \cite[p.~275]{dG-E} yields that $\Psi$ has type ${\rm E}_6$ and $h=\lambda t_7+\mu t_8$ for some $\lambda,\mu\in\F_p^\times$.
Let $\sigma=\sigma(h)\in G_{\mathbb C}$ be the element of order $p$ attached to $h$ in part~(b) and let $\sigma_{\bf a}\in T_{\mathbb C}$ be a canonical representative of the conjugacy class of $\sigma$.  From part~(b) we know that the set $\{\alpha\in\widetilde{\Pi}\,|\,\,a_\alpha=0\}$
forms a subdiagram of type ${\rm E}_6$ of the extended Dynkin diagram $\widetilde{{\rm E}}_8$.
Since $a_0=7-2a_1-3a_2-4a_3-6a_4-5a_5-4a_6-3a_7-2a_8\ge 0$ and $a_i\in \Z_{\ge 0}$ for all $i$, we have only one option here, namely,
\begin{equation}\label{e8-7-1}(a_0,a_1,a_2, a_3, a_4, a_5, a_6,a_7,a_8)\,=\,(2,0,0,0,0,0,0,1,1).\end{equation}
The uniqueness of $\sigma_{\bf a}$ implies that all $\sigma^i$  with $1\le i\le 6$ are $G$-conjugate forcing
$\dim_{\mathbb C}\,\g_{\mathbb C}(\sigma, \zeta^i)=\dim_{\mathbb C}\,\g_{\mathbb C}(\sigma, \zeta^j)=28$ for all $1\le i,j\le 6$. Therefore, in the present case $h$ is a $p$-balanced toral element of $\g$. Statement~(iv) follows.

It remains to consider the case where $p=7$ and $\dim\OO_0=210$.
There are two orbits of that dimension in $\N(\g)$ and their Dynkin labels are ${\rm A}_6$ and ${\rm D}_4({\rm a}_1)$. Both orbits  are Richardson by [dGE, p.~273].
As $\OO_0=\OO({\rm D}_4({\rm a}_1))\not\subset \N_p(\g)$ by \cite[Theorem~35]{McN03} and \cite[Table~9]{Law}
it must be that $\OO_0=\OO({\rm A}_6)$. Then \cite[p.~273]{dG-E} in conjunction with \cite[Theorem~1.4]{PSt} shows that $\Psi$ is of type ${\rm D}_4{\rm A}_2$ and
$h=\lambda t_1+\mu t_6$ for some $\lambda,\mu\in\F_p^\times$. Let  $\sigma_{\bf a}\in T_{\mathbb C}$ be a canonical representative of the conjugacy class of $\sigma(h)$.  By part~(b), the set $\{\alpha\in\widetilde{\Pi}\,|\,\,a_\alpha=0\}$
forms a subdiagram of type ${\rm D}_4{\rm A}_2$ of the extended Dynkin diagram $\widetilde{{\rm E}}_8$.
Since $a_0=7-2a_1-3a_2-4a_3-6a_4-5a_5-4a_6-3a_7-2a_8\ge 0$ and $a_i\in \Z_{\ge 0}$ for all $i$, the only  option we have here is
\begin{equation}\label{e8-7-2}(a_0,a_1,a_2, a_3, a_4, a_5, a_6,a_7,a_8)\,=\,(1,1,0,0,0,0,1,0,0).\end{equation}
Once again the uniqueness of $\sigma_{\bf a}$ implies that all powers $\sigma^i$  with $1\le i\le 6$ are 
$G_{\mathbb C}$-conjugate. Hence
$\dim_{\mathbb C}\,\g_{\mathbb C}(\sigma, \zeta^i)=\dim\g(h, i)=35$ for all $i\in \F_p^\times$. Therefore, $h$ is a $p$-balanced toral element of $\g$, proving (v).

(g) Suppose $G$ is of type ${\rm F}_4$. If $p>7$ then
$p(p-1)>52=\dim\g$, hence $\g$ has no orbits of dimension divisible by $p(p-1)$. If $p=7$ then it must be that $\dim\OO_0=42$ and $\dim\g(h,i)=7$ for all $i\in \F_p^\times$. There are two orbits of dimension $42$ in $\N(\g)$ and their Dynkin labels are ${\rm B}_3$ and ${\rm C}_3$. By \cite[p.~276]{dG-E} (which is applicable in view of
\cite[Theorem~1.4]{PSt}), both orbits are Richardson and  $\Psi$ has type ${\rm A}_2$ in both cases (in the ${\rm B}_3$-case $\Psi$ consists of short roots whereas in the ${\rm C}_3$-case all roots in $\Psi$ are
long).
In both cases there exists a unique collection ${\bf a}=(a_0,a_1,a_2,a_3,a_4)$ with $a_i\in\Z_{\ge 0}$ and
$a_0=7-2a_a-3a_2-4a_3-2a_4$ such that  the set $\{\alpha\in\widetilde{\Pi}\,|\,\,a_\alpha=0\}$ forms a basis of $\Psi$. Specifically, in the ${\rm B}_3$-case we take
${\bf a}=(2,1,1,0,0)$ whilst in the ${\rm C}_3$-case we take ${\bf a}=(1,0,0,1,1)$ (these are the only options available). Arguing as before it is now easy to observe that each of these two cases gives rise to a unique $p$-balanced conjugacy class in $\g$. This proves (vii).

Suppose $p=5$. Since there are no orbits of dimension $20$ in $\g$ and there exists only one orbit of  dimension $40$ in $\N(\g)$
it must be that $\OO_0=\OO({\rm F}_4({\rm a}_3))$.
Thanks to \cite[p.~276]{dG-E} this yields that $[\l,\l]$ has type ${\rm A}_1{\rm A}_2$ or ${\rm B}_2$. In the first case $h$ spans the centre of $\l$. Since $p=5$ we have that
$\l=\z(\l)\oplus[\l,\l]$ which forces $\kappa(h,h)\ne 0$ contrary to (\ref{killing}). Therefore, this case cannot occur. In the second case, Kac coordinates for the $G$-orbit of $h$ should verify $a_2=a_3=0$, $a_i>0$ for $i\in\{0,1,4\}$, and $ a_0+2a_1+2a_4=5$. Hence
\begin{equation}\label{f4-5-1} (a_0,a_1,a_2,a_3,a_4)=(1,1,0,0,1).\end{equation} As before, the uniqueness of ${\bf a}$ implies that
all elements $\sigma(h)^i$ with $1\le i\le 4$ are $G_{\mathbb C}$-conjugate. Consequently, $h$ is $p$-balanced in $\g$.

(h) If $G$ is of type ${\rm G}_2$ then $p(p-1)>\dim\g=14$ for any good prime $p$. This means that $\g$ does not admit any $p$-balanced toral elements.
This completes the proof of the proposition.
\end{proof}
\subsection{}\label{2.8} In the final stages of the
proof of Theorem~\ref{thm:main} we shall require
some information on the gradings of non-restricted Hamiltonian algebras $H(2;\underline{n};\Phi)^{(1)}$.

Given $m\in\Z_{\ge 1}$ we denote by $O(\!(m)\!)$ the full divided power algebra
in $m$ variables over $\Bbbk$. Its elements are the infinite series  $\sum_{\bf r}\,\lambda_{\bf r} x^{\bf r}$ with $\lambda_{\bf r}\in \Bbbk$. Here $x^{\bf r}= x_1^{(r_1)}\cdots x_m^{(r_m)}$, where
$r_i\in\Z_{\ge 0}$, and the product is induced by the rule
$$x^{\bf a}\cdot x^{\bf b}\,=\,
\Big(\textstyle{\prod_{i=1}^m{{a_i+b_i}\choose{a_i}} \Big)}\cdot x^{{\bf a}+{\bf b}}.$$
The maximal ideal $\m$ of the  ring $O(\!(m)\!)$ is equipped with a system of divided powers $f\mapsto f^{(k)}\in O(\!(m)\!)$ where $k\in\Z_{\ge 0}$; see \cite[2.1]{Str04} for more detail.
Given $f\in\m$ we set
$$
\exp(f):=\textstyle{\sum}_{k\ge 0}\,f^{(k)}.
$$
This element is well-defined in the linearly compact ring
$O(\!(m)\!)$ and using the axioms of divided powers it is straightforward to check that $\exp(f)^{-1}=\exp(-f).$

A continuous derivation $D$ of the topological algebra $O(\!(m)\!)$ is called {\it special}
if $D(f^{(k)})=f^{(k-1)}D(f)$ for all $f\in \m$ and
$k>0$. The special derivations of $O(\!(m)\!)$
form a Lie subalgebra of $\Der\big(O(\!(m)\!)\big)$ denoted $W(\!(m)\!)$. It is well known that this algebra
is a free $O(\!(m)\!)$-module of rank $m$  with a free basis consisting of the special partial derivatives
$\partial_1,\ldots,\partial_m$. Recall that
$\partial_i(x^{\bf r})=x^{{\bf r}-\epsilon_i}$ where
$\epsilon_i=(\delta_{i,1},\ldots, \delta_{i,m})$ and $1\le i\le m$. We denote by ${\rm Aut}_c\, O(\!(m)\!)$
the group of all continuous automorphisms $g$ of the topological $\Bbbk$-algebra $O(\!(m)\!)$ such that $g(f^{(k)})=
(g(f))^{(k)}$ for all $f\in \m$ and
$k>0$.

If $\underline{n}=(n_1,\ldots,n_m)\in \Z_{\ge 1}^m$
then the $\Bbbk$-span $O(m;\underline{n})$ of
all $x^{\bf r}$ with $0\le r_i\le p^{n_i}-1$ is a subalgebra of dimension $p^{n_1+\cdots+n_m}$ in $O(\!(m)\!)$ invariant under all special partial derivatives $\partial_i$ with $1\le i\le m$. The {\it general Cartan type Lie algebra} $W(m;\underline{n})$ is the normaliser of $O(m;\underline{n})$ in  $W(\!(m)\!)$. It is a free $O(m;\underline{n})$-module with basis $\partial_1,\ldots,\partial_m$.

We denote by $\Omega(\!(m)\!)=\bigoplus_{i=0}^m\,\Omega^i(\!(m)\!)$ the module of K{\"a}hler differentials over $O(\!(m)\!)$.
It is known that any Hamiltonian  algebra
$H(2;\underline{n};\Phi)^{(1)}\subset W(2;\underline{n})$ has Cartan type $S$ and stabilises the volume form
$J(\Phi) \omega_S\in\Omega^2(\!(2)\!)$ where $\omega_S={\rm d}x_1\wedge{\rm d}x_2$. Here $J(\Phi)\in O(\!(2)\!)^\times$ is the Jacobian of the admissible automorphism
$\Phi\in {\rm Aut}_c\,O(\!(2)\!)$; see \cite[p.~301]{Str04}. Moreover, thanks to \cite[Theorem~6.3.8]{Str04} we may assume that $\Phi=\Phi(\tau)$ or
$\Phi=\Phi(l)$ where
$\tau=\tau(\underline{n})=(p^{n_1}-1,p^{n_2}-1)$ and $l=1,2$. Recall that
$J\big(\Phi(\tau)\big)=1+x^{\tau(\underline{n})}$ and
$J\big(\Phi(l)\big)=\exp\big(x_l^{(p^{n_l})}\big)$; see \cite[p.~309]{Str04}.
\begin{lemma}\label{non-grad}
Let $S=H(2;\underline{n};\Phi)^{(1)}$ where $\Phi$ is one of $\Phi(\tau)$, $\Phi(1)$, $\Phi(2)$. Then all maximal tori of the algebraic group ${\rm Aut}(S)$ are $1$-dimensional.
\end{lemma}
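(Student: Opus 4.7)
The plan is to translate the question about tori in $\Aut(S)$ into a question about tori of admissible automorphisms of $O(2;\underline{n})$ that preserve the Hamiltonian volume form, and then to carry out an explicit coordinate computation for each of the three admissible $\Phi$'s. By the description of automorphisms of Hamiltonian Lie algebras in \cite[Theorem~6.3.7]{Str04}, every automorphism of $S=H(2;\underline{n};\Phi)^{(1)}$ is the restriction to $S$ of a unique continuous admissible automorphism $\psi\in \mathrm{Aut}_c\,O(\!(2)\!)$ stabilising $O(2;\underline{n})$ and mapping $J(\Phi)\omega_S$ to a scalar multiple of itself. Hence, given a maximal torus $T$ of the algebraic group $\Aut(S)$, we obtain an algebraic torus $\widetilde T$ in this group of admissible Hamiltonian automorphisms which maps isomorphically onto $T$.

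Next I would conjugate $\widetilde T$ into a standard diagonal form. The torus $\widetilde T$ preserves the unique maximal ideal $\m$ of $O(2;\underline{n})$ and acts on the two-dimensional space $\m/\m^2$; this representation is diagonalisable over $\Bbbk$. Choosing a $\widetilde T$-stable lift of $\m/\m^2$ inside $\m$ and iterating (a standard formal linearisation of a torus action on a local algebra) produces a $\widetilde T$-stable coordinate system $y_1,y_2$ of $O(2;\underline{n})$ in which $t\cdot y_i=\chi_i(t)\,y_i$ for all $t\in\widetilde T$ and some characters $\chi_1,\chi_2\colon\widetilde T\to\Bbbk^\times$. The corresponding change of variables is realised by an element of $\mathrm{Aut}_c\,O(2;\underline{n})$, and by \cite[Theorem~6.3.8]{Str04} the conjugated $\Phi$ can still be brought into the canonical list $\Phi(\tau),\Phi(1),\Phi(2)$; so we may assume from the outset that $\widetilde T$ acts diagonally on the original $x_1,x_2$.

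With $t\cdot x_i=\chi_i(t)x_i$ we have $t\cdot \omega_S=(\chi_1\chi_2)(t)\,\omega_S$, and the Hamiltonian condition becomes
$$(\chi_1\chi_2)(t)\cdot\big(t\cdot J(\Phi)\big)\,\in\,\Bbbk^\times\cdot J(\Phi)\qquad \text{for all } t\in\widetilde T.$$
If $\Phi=\Phi(\tau)$ then $J(\Phi)=1+x^{\tau(\underline{n})}$; matching the constant and $x^{\tau}$ terms forces $\chi_1^{p^{n_1}-1}\chi_2^{p^{n_2}-1}=1$ as characters of $\widetilde T$, a single nontrivial relation cutting a one-dimensional subtorus out of $(\Gm)^2$. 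If $\Phi=\Phi(l)$ with $l\in\{1,2\}$, then $J(\Phi)=\exp(x_l^{(p^{n_l})})$ and $t\cdot J(\Phi)=\exp(\chi_l(t)^{p^{n_l}}\,x_l^{(p^{n_l})})$; two such exponentials are proportional in $O(\!(2)\!)$ only when they coincide, which forces $\chi_l^{p^{n_l}}=1$. Since the $p^{n_l}$-torsion of $\Gm$ is infinitesimal in characteristic $p$, this yields $\chi_l=1$, leaving $\chi_{3-l}$ as the only free parameter, so $\widetilde T$ is at most one-dimensional here as well.

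This case analysis gives $\dim T=\dim\widetilde T\le 1$ in all three cases, and an honest $1$-dimensional example (take $\chi_{3-l}(\alpha)=\alpha$, $\chi_l=1$ for $\Phi(l)$; and the $1$-parameter subgroup of $(\Gm)^2$ parametrised by $\alpha\mapsto(\alpha^{p^{n_2}-1},\alpha^{-(p^{n_1}-1)})$ for $\Phi(\tau)$) shows the bound is attained. The principal technical point is the diagonalisation step: one must linearise the torus action on $O(2;\underline{n})$ compatibly with the canonical normal forms of $\Phi$ provided by \cite[Theorem~6.3.8]{Str04}. Once that is in place, the verification that the volume form $J(\Phi)\omega_S$ cuts the torus down to dimension~$1$ reduces to the short explicit calculation above.
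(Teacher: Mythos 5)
Your overall strategy --- pass to the group $\tilde{\bf G}$ of admissible automorphisms normalising $O(2;\underline{n})$, diagonalise the torus there, and read off character relations from the condition that $J(\Phi)\omega_S$ be a semi-invariant --- is the same as the paper's, and your explicit one-dimensional tori coincide with the ones the paper exhibits. But the step you yourself flag as ``the principal technical point'' is where the argument has a genuine gap: you cannot, without further work, arrange \emph{simultaneously} that $\widetilde T$ acts diagonally on $x_1,x_2$ and that the volume form is still the canonical $J(\Phi)\omega_S$. Conjugating $\widetilde T$ into the diagonal torus by some $g\in\tilde{\bf G}$ (which is legitimate, e.g.\ because $\tilde{\bf G}/R_u(\tilde{\bf G})$ is a parabolic of $\GL_2$ and tori are conjugate into a maximal one) replaces the volume form by $g\big(J(\Phi)\omega_S\big)$, which is no longer of the shape $(1+x^{\tau})\omega_S$ or $\exp\big(x_l^{(p^{n_l})}\big)\omega_S$. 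Renormalising it back into the canonical list via \cite[Theorem~6.3.8]{Str04} requires a further coordinate change $h$, and $hg\widetilde Tg^{-1}h^{-1}$ need not be diagonal any more; the argument is circular as stated. Consequently your matching of the constant and top-degree coefficients of $J(\Phi)$ --- which is where the relations $\chi_1^{p^{n_1}-1}\chi_2^{p^{n_2}-1}=1$ and $\chi_l^{p^{n_l}}=1$ come from --- is carried out under a hypothesis you have not secured.

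The gap is repairable, and the repair is essentially what the paper does: after diagonalising the torus, use only that the transported volume form is $J'\omega_S$ with $J'$ a \emph{unit} of $O(\!(2)\!)$, not its explicit normal form. Semi-invariance forces $\chi_1^{a_1}\chi_2^{a_2}$ to be independent of ${\bf a}$ on the support of $J'$; since $J'$ is a unit its support contains ${\bf a}=(0,0)$, so either $J'$ has a nonzero exponent in its support and one gets a nontrivial character relation (whence $\dim\widetilde T\le 1$), or $J'$ is constant, in which case $S\subseteq H(2;\underline{n})$, hence $S\subseteq H(2;\underline{n})^{(2)}$ by simplicity, which is impossible by comparing dimensions. (The paper runs this for a hypothetical two-dimensional torus, for which $J'$ would have to be a unit monomial and hence constant.) Your lower-bound examples involve no conjugation and agree with the paper's $T_\Phi$, so that part stands.
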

\begin{proof} Let ${\bf G}={\rm Aut}(S)$ and denote by $\tilde{\bf G}$ the normaliser of $O(2;\underline{n})$ in
${\rm Aut}_c\,O(\!(2)\!)$.
It follows from \cite[Theorem~10.8]{Skr91} that the group $\bf G$ consists of all $g\in \tilde{\bf G}$ such that
$g\big(J(\Phi) \omega_S\big)\in \Bbbk^\times J(\Phi) \omega_S$ (see also \cite[Theorems~7.3.2]{Str04}).
Any element $g\in \tilde{\bf G}$ is uniquely determined by its effect on the divided power generators $x_1,\,x_2 \in O(\!(2)\!))$
and $g(x_1),\,g(x_2)\in O(2;\underline{n})$. From this it is immediate that $\tilde{\bf G}/R_u(\tilde{\bf G})$ is isomorphic to a parabolic subgroup of $\GL_2(\Bbbk)$.
It follows that all maximal tori of $\tilde{\bf G}$ have dimension $2$. One such torus, $T_0$, consists of all
automorphisms $g(t_1,t_2)$ with $t_1,\,t_2\in\Bbbk^\times$ such that
$\big(g(t_1,t_2)\big)(x_i)=t_ix_i$ for $i=1,2$.

Suppose ${\bf G}$ contains a $2$-dimensional torus.
Then it is a maximal torus of $\tilde{\bf G}$ and hence is $\tilde{\bf G}$-conjugate to $T_0$. Replacing $\Phi\in{\rm Aut}_c\,O(\!(2)\!)$ by a $g \Phi g^{-1}$ for a suitable
$g\in \tilde{\bf G}$ we  many assume $T_0\subset {\bf G}$.
Then $J(\Phi){\rm d}x_1\wedge{\rm d}x_2$ is a weight vector for the action of $T_0$ on $\Omega^2(\!(2)\!)$.
Note that $O(\!(2)\!)$ decomposes into an infinite direct sum of weight spaces for $T_0$.
Since $\big(g(t_1,t_2)\big)(x^{\bf a})=t_1^{a_1}t_2^{a_2}\cdot x^{\bf a}$, all
$T_0$-weight spaces of $O(\!(2)\!)$ are $1$-dimensional and spanned by monomials $x^{\bf a}$.
As a consequence the $T_0$-weight spaces of
$\Omega^2(\!(2)\!)$ have the form $\Bbbk x^{\bf a}{\rm d}x_1\wedge{\rm d}x_2$ for ${\bf a}\in\Z_{\ge 0}^2$.
Since $J(\Phi)$ is invertible in $O(\!(2)\!)$ and $J(\Phi){\rm d}x_1\wedge{\rm d}x_2=\lambda
x^{\bf a}{\rm d}x_1\wedge{\rm d}x_2$ for some $\lambda\in\Bbbk^\times$ and ${\bf a}
\in\Z_{\ge 0}^2$, it must be that ${\bf a}=(0,0)$.
But then $H(2;\underline{n};\Phi)^{(1)}\subseteq H(2;\underline{n})$. The simplicity of $H(2;\underline{n};\Phi)^{(1)}$ now forces $H(2;\underline{n};\Phi)^{(1)}\subseteq H(2;\underline{n})^{(2)}$. Since
$p^{n_1+n_2}-2=\dim  H(2;\underline{n})^{(2)}\le
\dim H(2;\underline{n};\Phi)^{(1)}\le {p^{n_1+n_2}}-1$
this is impossible.
This contradiction shows that the maximal tori of $\bf G$ are at most $1$-dimensional.

On the other hand, using  the above expressions for $J(\Phi)$ it is straightforward, in each case, to produce a $1$-dimensional subtorus $T_\Phi$ of $T_0$ contained in $\bf G$. If $\Phi=\Phi(\tau)$ we take for $T_\Phi$ the identity component of $\{g(t_1,t_2)\in T_0\,|
\,\,t_1^{p^{n_1}-1} t_2^{p^{n_2}-1}=1\}$ and if $\Phi=\Phi(l)$, where $l\in\{1,2\}$,  we take $T_\Phi=\{g(t_1,t_2)\in T_0\,|\,\,t_l=1\}$. This completes the proof.
\end{proof}
\section{Proof of the main theorem}
\subsection{}\label{3.1} Let $M$ be as in the statement of Theorem~\ref{thm:main}. Our proof will make essential use of Weisfeiler's theorem; see \cite{W2} and \cite[Theorems~3.5.6, 3.5.7 ad 3.5.8]{Str04}. Let $O(m;\underline{n})$ and $W(m,\underline{n})$  be as in (\ref{2.8}). We shall often encounter the case where
$\underline{n}=\underline{1}$, that is $n_i=1$ for all $i$.
In this case all derivations of $O(m;\underline{n})$ are special and
$W(m;\underline{1})$ is the full derivation algebra  of
$O(m;\underline{1})\cong \Bbbk[X_1,\ldots,X_m]/(X_1^p,\ldots,X_m^p)$.
For $1\le i\le m$ we let $x_i$ stand for the divided-power generator
$x^{(\epsilon_i)}$ of $O(m;\underline{n})$ and denote by $O(x_{i_1},\ldots,x_{i_s})$ the divided power subalgebra of $O(m;\underline{n})$ generated by $x_{i_1},\ldots,x_{i_s}$ (as a $\Bbbk$-algebra $O(x_{i_1},\ldots,x_{i_s})$ is
isomorphic to a truncated polynomial ring in $n_{i_1}+\cdots+n_{i_s}$ variables). A Lie subalgebra $\mathcal{D}$ of
$W(m;\underline{n})\,=\,\sum_{i=1}^m\,O(m;\underline{n})\, \partial_i$ is called {\it transitive} if $O(m;\underline{n})$ does not contain nonzero proper $\mathcal{D}$-invariant ideals.
\subsection{}\label{3.2}
Given a non-empty subset $X\subseteq\g$ and $k\in \Z_{\ge 2}$ we let
$X^k$ denote the linear span of all elements
$\big((\ad\, x_1)\circ\cdots \circ(\ad\,x_{k-1})\big)(x_k)$ with $x_i\in X$.
Put $M_{(0)}:=M$ and choose an $(\ad\,M_{(0)})$-stable subspace $M_{(-1)}$ in $\g$
such that $M_{(0)}\subsetneq M_{(-1)}$ and $M_{(-1)}/M_{(0)}$ is an irreducible $M_{(0)}$-module. Given $i\in\Z_{\ge 2}$ we define a subspace $M_{(-i)}\subseteq \g$ recursively by setting
$M_{(-i)}\,=\,M_{(-1)}^i+M_{(-i+1)}.$
Since $M$ is a maximal subalgebra of $\g$ there exists a positive integer $q$ such that $\g=M_{(-q)}\supsetneq M_{(-q+1)}$. Set $M_{(1)}:=\{x\in M_{(0)}\,|\,\,[x,M_{(-1)}]\subseteq M_{(0)}$ and for $i\ge 2$
define $M_{(i)}:=\{x\in M_{(i-1)}\,|\,\,[x,M_{(-1)}]\subseteq M_{(i-1)}\}$. Since $\g$ is a simple Lie algebra there is a non-negative integer $r$ such that $0=M_{(r+1)}\subsetneq M_{(r)}$.
It is well known that
the chain of subspaces
$$\g=M_{(-q)}\supset\cdots\supset M_{(0)}\supseteq \cdots \supseteq M_{(r)}\supset 0$$ is a Lie algebra filtration, that is, $[M_{(i)},M_{(j)}]\subseteq M_{(i+j)}$ for all $i,j\in\Z$.
By maximality, $M$ is a restricted sublagebra of $\g$ and one can see by induction on $k$ that $M_{(k)}=\{x\in M_{(0)}\,|\,\,\big(\ad M_{(-1)}^k\big)(x)\subseteq M_{(0)}\}$ for all $k>0$.
From this it is immediate that
$M_{(k)}^{[p]}\subseteq M_{(pk)}$ for such $k$. The decreasing filtration of $\g$ thus obtained is referred to as the {\it Weisfeiler filtration} associated with the pair $(M_{(-1)}, M_{(0)})$.

Since $M_{(-1)}/M_{(0)}$ is an irreducible module over $M_{(0)}=M$ we have that
${\rm nil}(M)\subseteq M_{(1)}$. If $x\in M_{(1)}$ then $(\ad\,x)^{q+r+1}(M_{(-q)})=0$. So the ideal $M_{(1)}$ of $M$ consists of nilpotent elements of $\g$. This shows that $M_{(1)}={\rm nil}(M)$. Since we are assuming that ${\rm nil}(M)\ne 0$, Weisfeler's filtration does not collapse completely in our case. In other words, $r>0$.

We denote by $\G$ the corresponding graded Lie algebra
$$\G={\rm gr}(\g)\,:=\,\textstyle{\bigoplus}_{i=-q}^r\,\G_i,\qquad\ \G_i={\rm gr}_i(\g)=M_{(i)}/M_{(i+1)},$$ a graded vector space whose Lie product is induced by that of $\g$.
We can give $\G_{\ge 0}:=\bigoplus_{i\ge 0}\,\G_i$ a restricted Lie algebra structure by setting
$
\big({\rm gr}_i(x)\big)^{[p]}:=\,{\rm gr}_{pi}\big(x^{[p]}\big)$ for all $x\in M_{(i)}$
where  $i\ge 0$.
By construction, $\G_0\cong M/{\rm nil}(M)$ as Lie algebras, $\G_{-1}$ is an irreducible $\G_0$-module, and the Lie subalgebra $\G_{-}:=\bigoplus_{i<0}\,\G_i$ is generated by $\G_{-1}$.
\subsection{}\label{3.3}
Let $N(\G)=\bigoplus_{i<0}\,N_i(\G)$ be
the largest graded ideal of $\G$ contained in $\G_{-}$ and put $\bar{\G}=\G/N(\G)$ so that $\bar{\G}=\bigoplus_{i=-q}^r\,\bar{\G}_i$ where $\bar{\G}_i=\G_i/N_i(\G)$.
Since $N(\G)\cap \G_{-1}=0$ by the irreducibility of $\G_{-1}$ we have that $N(\G)\subseteq \bigoplus_{i\le -2}\,\G_i$ (this means that $\bigoplus_{i\ge -1}\,\G_i\cong\, \bigoplus_{i\ge -1}\,\bar{\G}_i$ as graded vector spaces).

Since $r>0$, the first part of Weisfeiler's theorem \cite{W2} says that
$N(\G)$ coincides with the radical of $\G$ and the semisimple Lie algebra
$\bar{\G}$ has a unique minimal ideal which contains $\bar{\G}_{-}:=\bigoplus_{i\le -1}\bar{\G}_i$. This ideal, denoted
$A(\bar{\G})$, is isomorphic to $S\otimes O(m;\underline{n})$ for some simple Lie algebra $S$ and some divided power algebra $O(m;\underline{n})$.
The restricted Lie algebra structure of $\G_{\ge 0}$
induces that on $\bar{\G}_{\ge 0}:=\bigoplus_{i\ge 0}\,\bar{\G_i}$.

Thanks to Block's theorem, the adjoint action of $\bar{\G}$ on $A(\bar{\G})$ gives rise to inclusions
\begin{eqnarray}\label{Eq1}
S\otimes O(m;\underline{n})\subset \bar{\G}\subseteq \big((\Der(S)\otimes O(m;\underline{n})\big)\rtimes \big({\rm Id}_S\otimes W(m;\underline{n})\big)
\end{eqnarray}
and the canonical projection $$\pi\colon\big((\Der(S)\otimes O(m;\underline{n})\big)\rtimes \big({\rm Id}_S\otimes W(m;\underline{n})\big)\twoheadrightarrow W(m;\underline{n})$$ maps $\bar{\G}$ onto a transitive subalgebra of $W(m;\underline{n})$; see \cite[Theorem~3.3.5]{Str04}, for example.
Since $\bar{\G}$ is semisimple (hence centreless) and acts faithfully on $A(\bar{\G})$, the restricted Lie algebra $\bar{\G}_{\ge 0}\,\cong {\ad}_{A(\bar{\G})}\big(\bar{\G}_{\ge 0}\big)$ identifies naturally with a restricted subalgebra of $\Der(A(\bar{\G}))$.
\subsection{}\label{3.4}
The uniqueness of  $A(\bar{\G})$ implies that it is a graded ideal of $\bar{\G}$.
The grading of $A(\bar{\G})$ is completely determined by the second part Weisfeiler's theorem which states that only one of the following two cases can occur:

{\it Degenerate Case.} If $A(\bar{\G})$ intersects trivially with $\bar{\G}_+:=\bigoplus_{i>0}\,\bar{\G}_i$ then necessarily $m\ge 1$ and the grading of $A(\bar{\G})$ is induced by that of the associative algebra $O(m;\underline{n})$. The latter is given by fixing a positive integer $s\le m$ and assigning to the divided power generators $x_1,\ldots, x_s$ and $x_{s+1},\ldots, x_m$ degrees $-1$ and $0$, respectively.
More precisely, for $i\le 0$ the graded component of $A(\bar{\G})$ has the form $A_{i}(\bar{\G})=S\otimes O(m;\underline{n})[i;s]$ where $O(m;\underline{n})[i;s]$ is the linear span of all monomials
$\prod_{i=1}^m x_i^{(a_i)}$ with $0\le a_i\le p^{n_i}-1$ and $a_1+\cdots +a_s=-i$.
Moreover, in this case $[[\G_{-1},\G_1],\G_1]=0$ and $\G_i=0$ for all $i\ge 2$. Finally,
$\G_1$ identifies with a nonzero subspace of $\sum_{i=1}^sO(x_{s+1},\ldots, x_m)\del_i$ through an embedding described in (\ref{Eq1}) and $$S\otimes O(x_{s+1},\ldots,x_m)\subset \G_0\subseteq\big(\Der(S)\otimes O(x_{s+1},\ldots,x_m) \big)\rtimes\big({\rm Id}_S\otimes W(m,\underline{n})[0;s]\big)$$
where $W(m;\underline{n})[0;s]\,=\,\textstyle{\sum}_{i>s} O(x_{s+1},\ldots,x_m)\del_i\oplus
\textstyle{\sum}_{i,j\le s}\, O(x_{s+1},\ldots,x_m)(x_i\del_j)$.

{\it Non-degenerate Case.} The simple Lie algebra
$S$ is $\Z$-graded in such a way that $S_{\pm 1}\ne 0$ and
$A_i(\bar{\G})=S_i\otimes O(m;\underline{n})$ for all $i\in\Z$.
The grading of $S$ induces that on $\Der(S)$ and $\G_0$ is sandwiched between $S_0\otimes O(m;\underline{n})$ and
$\big(\Der_0(S)\otimes O(m;\underline{n})\big)\rtimes\big({\rm Id}_S\otimes W(m;\underline{n})\big)$.
Since $\G_{-1}=S_{-1}\otimes(m;\underline{n})$ is a faithful $\G_0$-module and $S_0\otimes 1\subseteq S_0\otimes O(m;\underline{n})\subseteq \G_0$, the Lie subalgebra $S_0$ of $S$ acts faithfully on $S_{-1}$.
Although $S_{-1}$ does not have to be irreducible over $S_0$, in general, it
follows from \cite[Theorem~3.5.7(6)]{Str04} that
this module is semisimple and isogenic when
$\Der(S)\,=\,\ad S$.

Let $M$ be a counterexample to Theorem~\ref{thm:main}. By Lemma~\ref{regsub}, $M$ is not regular  in $\g$.
We set $M:=M_{(0)}$, choose a subspace $M_{(-1)}$ as in (\ref{3.2}), and consider the Weisfeiler filtration
of $\g$ associated with the pair $(M_{(-1)},M_{(0)})$.
Write $\G$ for the corresponding graded Lie algebra, and let $N(\G)$, $\bar{\G}$ and $A(\bar{\G})=S\otimes O(m;\underline{n})$ be as in (\ref{3.3}).
\subsection{}\label{3.5} In the next three subsections we assume that the degenerate case of Weisfeiler's theorem holds for $\bar{\G}$. In particular, this means that $m\ge 1$. If $m=1$ then
necessarily $s=1$. As a consequence,
$\G_1=\Bbbk\del_1$ is $1$-dimensional. Since $\G_2=0$ by (\ref{3.4}), it must be that $M_{(2)}=0$. So
${\rm nil}(M)=M_{(1)}$ is isomorphic to $\G_1$.
But then ${\rm nil}(M)=\Bbbk e$ for some nonzero nilpotent element $e\in\g$ and $M\subseteq \n_\g(\Bbbk e)$. This, however, contradicts our assumption on $M$ because $\n_\g(\Bbbk e)$ is contained in a proper parabolic subalgebra of $\g$
(see the proof of Corollary~\ref{max-root} for detail).

Now suppose $m\ge 2$. Since $S$ is a simple Lie algebra and $\dim\,O(m;\underline{n})\ge p^m$, we have the inequality $(\dim\,\g)/p^m>\dim\,S\ge 3$. Since $\g$ is exceptional and $p$ is a good prime for $\g$, this forces $m=2$, $\underline{n}=(1,1)$, and rules out the cases where $\g$ is of type ${\rm G}_2$ or ${\rm F}_4$.
Furthermore, if $\g$ is of type ${\rm E}_6$ then $p=5$ and $S\cong\sl_2$,
if $\g$ is of type ${\rm E}_7$ then $p=5$ and $S$ is either $\sl_2$ or $W(1;\underline{1})$, and if $\g$ is of type ${\rm E}_8$ then $p=7$ and $S\cong \sl_2$. In any event, all derivations of $S$ are inner which implies that
$$S\otimes O(2;\underline{1})\subset \bar{\G}\,=\, \big(S\otimes O(2;\underline{1})\big)\rtimes\big({\rm Id}_S\otimes\mathcal{D}\big)$$ for some transitive Lie subalgebra of $W(2;\underline{1})$.
\subsection{}\label{3.6}
Suppose $s=m=2$. Then $\G_1\subseteq \Bbbk\del_1\oplus \Bbbk\del_2$ and $$
\G_0\cong \bar{\G}_0\,=\big(S\otimes 1\big)
\,\textstyle{\oplus}\,\big({\rm Id}_S
\otimes \mathcal{D}_0\big)\subseteq\big(S\otimes 1\big)
\,\textstyle{\oplus}\,
\Big(\textstyle{\sum}_{i,j=1}^2\,\Bbbk(x_i\del_j)\big)$$
where $\mathcal{D}_0=\mathcal{D}\cap W(2;\underline{1})[0].$
If $\dim\,\G_1=1$ we can argue as  in the previous paragraph to conclude that $M$ is contained in a proper parabolic subalgebra of $\g$. So assume that
$\G_1=\Bbbk\del_1\oplus\Bbbk\del_2$. Then $\mathcal{D}_0\subseteq\textstyle{\sum}_{i,j=1}^2\,\Bbbk(x_i\del_j)\cong\gl_2$ acts faithfully
on $\G_1$. Let $Q$ denote the centraliser of ${\rm nil}(M)$ in $\g$. This is a restricted ideal of $M$. Since $\G_2=0$, the ideal
${\rm nil}(M)$ is abelian. Hence ${\rm nil}(M)\subseteq Q$.
Write $\bar{Q}$ for the image
of $Q$ in $\G_0=M/{\rm nil}(M)$. Since $S\otimes\Bbbk\subset \G_0$ commutes with $\G_1$
and $\mathcal{D}_0$ acts faithfully on $\G_1$, we now deduce that $\bar{Q}=S\otimes\Bbbk$.

Our next goal is to produce a nice subalgebra $R$ of $M$ satisfying the conditions of Lemma~\ref{borel}.
If $S\cong\sl_2$ we choose a standard basis
$\{e_0,h_0,f_0\}$ of $S$. If $S\cong W(1;\underline{1})$ we choose an $\sl_2$-triple
$\{e_0,h_0,f_0\}\subset S\setminus\{0\}$ such that $f_0\in \Bbbk \del$,
$h_0=x\del$ and $e_0\in\Bbbk (x^2\del)$.
This choice ensures that $\mathfrak{c}_S(f_0)=\Bbbk f_0$.
To ease notation we identify $S$ with $S\otimes\Bbbk\subset \bar{\G}_0$ and regard $e_0, h_0, f_0$ as elements of $\bar{\G}_0$.
Let $\tilde{h}_0\in Q$ be a preimage of $h_0$ under
the canonical homomorphism  $Q\twoheadrightarrow
\bar{Q}=Q/M_{(1)}$. By construction, $h_0$ is a toral element of $\G_0$. Replacing $\tilde{h}_0$ by its sufficiently large $[p]$-th power in $Q$ we may assume that $\tilde{h}_0$ is a semisimple element of $\g$.
Then the inverse image of $f_0$ under the canonical homomorphism $Q\twoheadrightarrow
\bar{Q}=Q/M_{(1)}$ contains an eigenvector for $\ad\,\tilde{h}_0$; we call it $\tilde{f}_0$. Since $f_0^{[p]}=0$ in $S$ it must be that $\tilde{f}_0^{[p]}\in M_{(1)}$. Therefore, $\tilde{f}_0$ is a nilpotent element of $\g$.
We now set $R:=\Bbbk\tilde{h}_0\oplus\Bbbk\tilde{f}_0\oplus {\rm nil}(M)$. Clearly, this is a Lie subalgebra of $M$ and $[R,R]\subseteq \Bbbk\tilde{f}_0\oplus {\rm nil}(M)$. So $[R,R]$ consists of nilpotent elements of $\g$ by Jacobson's formula for $p$-th powers.
By Corollary~\ref{max-root}, there exists a nonzero $e\in Q$ such that $[R,e]\subseteq \Bbbk e$ and $[e,[e,\g]]=\Bbbk e$. Let $\bar{e}$ denote the image of
$e$ in $\bar{Q}$.

Suppose $\bar{e}\ne 0$. As $[f_0,\bar{e}]=0$ we then have
$\bar{e}\in \Bbbk^\times f_0$. So our assumption on $\bar{e}$ entails $\rk\,(\ad\,f_0)^2\le 1$.
However, the restriction of $(\ad\,f_0)^2$ to every subspace $\bar{\G}_{-i}=
S\otimes O(2;\underline{1})[i;2]$ with $1\le i\le 2(p-1)$ is nonzero, forcing   $\rk\,(\ad\,e)^2\ge \rk\,(\ad\,f_0)^2\ge 2p-2$. This contradiction shows that $\bar{e}=0$. As a consequence,
$e\in M_{(1)}$. Since $\G_1=\Bbbk\del_1\oplus\Bbbk\del_2$, we may assume after a suitable linear substitution of $x_1,x_2$ that the image of $e$ in $\bar{\G}$ equals $\del_1$. But since $(\ad\,e)^3=0$ this would entail
$(\ad\,\del_1)^3=0$ which is false because $(\ad \partial_1)^{p-1}\ne 0$ and $p>3$. We thus conclude that the present case cannot occur.
\subsection{}\label{3.7}
Now suppose $m=2$ and $s=1$, so that again $S$ is either $\sl_2$ or $W(1;\underline{1})$. This case is quite similar to the previous one, but we need to choose a solvable subalgebra $R$ more carefully.
By (\ref{3.4}),  $\G_1\subseteq O(x_2)\del_1$ and
$\G_0\cong \bar{\G}_0=\big(S\otimes O(x_2)\big)
\,\textstyle{\oplus}\,\big({\rm Id}_S\otimes\mathcal{D}_0\big)$
where  $$\mathcal{D}_0=\mathcal{D}\cap W(2;\underline{1})[0;1]\,\subseteq\, O(x_2)\del_2\,\textstyle{\oplus}\, O(x_2)(x_1\del_1).$$
Let $\m_2$ be the maximal ideal of the local ring $O(x_2)$. Thanks to the transitivity of $\mathcal{D}\subseteq\mathcal{D}_0\oplus \G_1$ and the inclusion $\G_1\subseteq O(x_2)\del_1$ there exists
an element $d\in \bar{\G}_0$ such that $\pi(d)=\phi(x_2)\del_2+\psi(x_2)(x_1\del_1)\in\mathcal{D}_0$ has the property that $\phi(x_2)\in O(x_2)\setminus\m_2$. Since all derivations of $S$ are inner, our discussion in (\ref{3.4}) yields $d-\pi(d)\in S\otimes O(x_2)$.

As before, we may assume that $\dim \G_1\ge 2$. Indeed, otherwise ${\rm nil}(M)=M_{(1)}$ is $1$-dimensional implying that $M=\n_\g(M_{(1)})$ is contained in a parabolic subalgebra of $\g$.
We choose $e_0,f_0, h_0\in S$ as in (\ref{3.6}) and define $\bar{R}:=\Bbbk d\oplus\Bbbk (h_0\otimes 1)\oplus (f_0\otimes O(x_2))$. Then $[\bar{R},\bar{R}]\subseteq
f_0\otimes O(x_2)$ yielding $[\bar{R},\bar{R}]^{[p]}\subseteq
f_0^{[p]}\otimes O(x_2)=0$. Let $R$ be the preimage
of $\bar{R}$ under the canonical homomorphism
$M\twoheadrightarrow \G_0$. By construction, $R$ is a Lie subalgebra of $M$ with $[R,R]^{[p]}\subseteq M_{(1)}$. Therefore, it satisfies the assumptions of Lemma~\ref{borel}.

Now we need to locate $\bar{Q}$, the image of $Q=\c_\g(M_{(1)})$ in $\G_0$. It contains $S\otimes O(x_2)$ because the latter
commutes with $\G_1\subseteq O(x_2)\del_1$.
Therefore, $$\bar{Q}=\big({\rm Id}_S\otimes {\rm Ann}_{\mathcal{D}_0}(\G_1)\big)\,\textstyle{\oplus}\,\big(S\otimes O(x_2)\big).$$
Clearly, ${\rm Ann}_{\mathcal{D}_0}(\G_1)$
is an ideal of $\mathcal{D}_0$. Since $\G_1\subseteq O(x_2)\del_1$ is $d$-stable and $O(x_2)(x_1\del_1)$
preserves $\m_2\del_1\subset \G_1$, the subspace $\G_1$
contains an element of the form $a(x_2)\del_1$ with $a(x_2)\in O(x_2)\setminus\m_2$. Since $$[f(x_2)(x_1\del_1),a_2(x_2)\del_1]=-f(x_2)a(x_2)\del_1\qquad\quad\big(\forall\,f(x_2)\in
O(x_2)\big)$$
and $a(x_2)$ is invertible in $O(x_2)$, it must be that $\big(O(x_2)(x_1\del_1)\big)\bigcap{\rm Ann}_{{\mathcal D}_0}(\G_1)=0.$

Suppose ${\rm Ann}_{{\mathcal D}_0}(\G_1)\ne 0$. Then there exists $u=b(x_2)\del_2+c(x_2)(x_1\del_1)\in {\rm Ann}_{{\mathcal D}_0}(\G_1)$ such that $b(x_2),c(x_2)\in O(x_2)$ and
$b(x_2)\ne 0$. Since $O(x_2)(x_1\del_1)$ is an abelian ideal of the Lie algebra $O(x_2)\del_2\oplus O(x_2)(x_1\del_1)$ and ${\rm Ann}_{{\mathcal D}_0}(\G_1)$
is $(\ad d)$-stable, we may assume further that
$b(x_2)\not\in\m_2$. As $\dim\,\G_1\ge 2$ and $\m_2\del_1$ has codimension $1$ in $O(x_2)\del_1$
it must be that $(\m_2\del_1)\cap \G_1\ne 0$.
Let $v=g(x_2)\del_1$ be a nonzero vector in
$(\m_2\del_1)\cap \G_1$.
Then there is $k\in\{1,\ldots, p-1\}$ such that $g(x_2)\in\m_2^k\setminus\m_2^{k+1}$ and
$$[u,v]=[b(x_2)\del_2+c(x_2)(x_1\del_1),g(x_2)\del_1]=\big(b(x_2)g'(x_2)-c(x_2)g(x_2)\big)\del_1\not\in\m_2^k\del_1$$
(it is important here that $b(x_2)\not\in\m_2$). In particular, $[u,v]\ne 0$. Since this contradicts our choice of $u$ we now deduce that ${\rm Ann}_{{\mathcal D}_0}(\G_1)=0$.
This yields $\bar{Q}=S\otimes O(x_2)$.

By Corollary~\ref{max-root}, there exists a nilpotent element $e\in Q$ such that $[R,e]\subseteq \Bbbk e$ and $[e,[e,\g]]=\Bbbk e$. Let $\bar{e}$ be the image of $e$ in $\bar{Q}=S\otimes O(x_2)$. Since
$[f_0\otimes 1,\bar{e}]=0$  it must be that $\bar{e}=f_0\otimes q(x_2)$ for some $q(x_2)\in O(x_2)$. Suppose $\bar{e}\ne 0$. Note that
$O(x_2)(x_1\del_1)$ acts trivially
on $S\otimes O(x_2)$. Since $$[d,f_0\otimes q(x_2)]\in \Bbbk(f_0\otimes q(x_2))\bigcap\big(f_0\otimes \phi(x_2)q'(x_2)+S\otimes O(x_2)q(x_2)\big)$$ and $\phi(x_2)\not\in\m_2$, it must be that $q(x_2)\not\in\m_2$. From this it is immediate that the restriction of $(\ad \,\bar{e})^2$ to every subspace $\bar{\G}_{-i}=S\otimes O(2;\underline{1})[i;1]$ with $1\le i\le p-1$ is nonzero. But then
$\rk\,(\ad e)^2\ge \rk\,(\ad\,\bar{e})^2\ge p-1$. This contradiction shows that $e\in M_{(1)}$.
Identifying $M_{(1)}$ with $\G_1\subseteq O(x_2)\del_1$ we write $e=w(x_2)\del_1$ for some nonzero $w(x_2)\in O(x_2)$. Since $\phi(x_2)\not\in\m_2$ and
$$[d,w(x_2)\del_1]=\big(\phi(x_2)w'(x_2)-
\psi(x_2)w(x_2)\big)\del_1\in \Bbbk (w(x_2)\del_1)$$
it is straightforward to see that $w(x_2)\not\in\m_2$. But then
$(w(x_2)\del_1)^{p-1}$ is a nonzero linear operator on
$O(2;\underline{1})$ implying that $(\ad \,e)^3\ne 0$. Since this is false, we finally conclude that the degenerate case of Weisfeiler's theorem cannot occur for $\bar{\G}$.
\subsection{}\label{3.8}
From now on we may assume that the non-degenerate case of Weisfeiler's theorem holds for $\bar{\G}$,  that is $S=\bigoplus_{i\in\Z}\,S_i$ is $\Z$-graded,
$A_i(\bar{\G})=S_i\otimes O(m;\underline{n})$ for all $i\in\Z$, and $$S_0\otimes O(m;\underline{n})\subseteq \bar{\G}_0\subseteq \big(\Der_0(S)\otimes O(m;\underline{n})\big)\rtimes \big({\rm Id}_S\otimes\mathcal{D}\big)$$ where $\mathcal{D}=\pi(\bar{\G}_0)$ is a transitive subalgebra of $W(m;\underline{n})$. We shall often identify $\G_{\ge 0}$ with $\bar{\G}_{\ge 0}$.

First suppose that $S=\Lie(\mathcal{H})$ for some simple algebraic $\Bbbk$-group $\mathcal{H}$ (this excludes the case where $S\cong \psl_{kp}$ for some $k\ge 1$).
Then $\Der(S)=\ad S$; see \cite[Lemma~2.7]{BGP}, for example.
It follows that $\bar{\G}_i=S_i\otimes O(m;\underline{n})$ for all $i\ne 0$.
Our grading of $S$ is induced by the action of a $1$-dimensional torus $T_0$ of $({\rm Aut}\,S)^\circ=\Ad\,\mathcal{H}$. Differentiating this action we  find
a toral element $t_0\in \Lie(T_0)$ such that
$[t_0,x]=\bar{i}x$ for all $x\in S_i$ where $i\in \Z$.
Here and in what follows we write $\bar{i}$ for the image of $i$ in $\F_p\subset\Bbbk$.
The element $t_0\in\Der(S)$ is often referred to as the {\it degree derivation} of the graded Lie algebra $S$. In the present case the derivation $t_0$ is inner.

It is well known that $S_{\ge 0}=\bigoplus_{i\ge 0}\,S_i$ is a parabolic subalgebra of $S$ and $S_0=S^{T_0}$ is a Levi subalgebra of $S_{\ge 0}$. Obviously, $t_0\in S_0$
and we may assume without loss of generality
that $S_{\ge 0}$ is a standard parabolic
subalgebra of $S$.
Since $M_{(0)}=M$ is a restricted subalgebra of $\g$, there exists a toral element $\tilde{t}_0\in M_{(0)}$
which maps onto $t_0$ under the canonical homomorphism $M_{(0)}\twoheadrightarrow \G_0$. If $\c_{\G}(t_0)\subseteq \G_{\ge 0}$ then $\g_{\tilde{t}_0}\subseteq M_{(0)}$. Since $\g_{\tilde{t}_0}$ is a Levi subalgebra of $\g$, it contains a maximal torus of $\g$. This, however, contradicts our assumption that $M$ is not a regular subalgebra of $\g$. So it must be that $\c_{\G}(t_0)\cap \G_i\ne 0$ for some $i<0$.

Since $\Der(S)=\ad S$, our discussion at the end of (\ref{3.4}) shows that
the $S_0$-module $S_{-1}$ is semisimple and isogenic.
In particular, $\z(S_0)$ acts on $S_{-1}$ by scalar endomorphisms forcing $\z(S_0)=\Bbbk t_0$.
From this it follows that the parabolic subalgebra $S_{\ge 0}$ is maximal in $S$ and our grading
of $S$ is standard.
This means that there exist a maximal torus $T$ of $\mathcal{H}$ containing $T_0$, a basis of simple roots
$\{\alpha_1,\ldots,\alpha_\ell\}$ in the root system $\Phi(\mathcal{H},T)$, and a positive integer $d\le \ell$ such that $S_k$ with $k\ne 0$ is spanned by all root vectors $e_\gamma$ of $S$ with respect to $T$ such that $
\gamma=\sum_{i=1}^\ell m_i\alpha_i$ and $m_d=k$; see \cite[2.4]{BGP} for details.

Note that $p>3$ is good for $S$ provided that $S$ is not of type ${\rm E}_8$ and if $S$ is of type ${\rm E}_8$ then so is $\g$. So in any event $p$ is good for $S$.
Since our grading of $S$ is standard we now obtain that $\bar{\G}_+=\bigoplus_{i>0}\,\big(S_i\otimes O(m;\underline{n})\big)$ is generated by $\bar{\G}_1=S_1
\otimes O(m;\underline{n})$. Furthermore,  $\dim\, S_{i}=\dim\,S_{-i}$ for all $i\in\Z$, and
$S_i=0$ for all $i\ge p$.    This yields
$\c_{\bar{\G}}(t_0)\subseteq \bar{\G}_{\ge 0}.$ Since $\c_{\G}(t_0)\cap \G_i\ne 0$ for some $i<0$ it follows that $N(\G)\ne 0$.

Let $l$ be the smallest positive integer for which $N_{-l}(\G)\ne 0$. Then $2\le l\le p$ and $[N_{-l}(\G), \G_1]=0$.
Since $\g$ is generated by $M_{(-1)}$, the toral element $\tilde{t}_0$ preserves each $M_{(i)}$ with $i\in\Z$
and acts on $M_{(i)}/M_{(i+1)}$ as the scalar operator $\bar{i}\cdot {\rm Id}$. Since $\ad\,\tilde{t}_0$ is diagonalisable, for every $i\in\Z$ there exists a subspace $V_i\subset\g$
such that $[\tilde{t}_0,v]=\bar{i}v$ for all $v\in V_i$ and $M_{(i)}=V_i\oplus M_{(i+1)}$.
Given $a\in \Bbbk$ and an $(\ad\,\tilde{t}_0)$-invariant subspace $W$ of $\g$ we write
$W(a)$ for the set of all $w\in W$ with $[\tilde{t}_0,w]=aw$.

Let $\bar{u}\in N_{-l}(\G)\setminus\{0\}$ and pick $u\in V_{-l}$ which maps onto $\bar{u}$ under the canonical homomorphism $M_{(-l)}\twoheadrightarrow
\G_{-l}=
M_{(-l)}/M_{(-l+1)}$. Since
$[\bar{u}, \G_1]=0$ it must be that $[u,M_{(1)}]\subseteq
M_{(-l+2)}$. But then $$[u, V_1]\subseteq
M_{(-l+2)}(\overline{-l+1})\,=\,\big(V_{-l+2}\,\textstyle{\oplus}\,\cdots\,\textstyle{\oplus}\,
V_{-1}\textstyle{\oplus}\,V_0\,\textstyle{\oplus}\,
M_{(1)}\big)\cap \g(\overline{-l+1})\,=\,M_{(1)}(\overline{-l+1}).
$$
(It is of utmost importance here that the set of residues of $0,-1,\ldots, -l+2$ in $\F_p=\Z/p\Z$ does not contain the residue of $-l+1$). This shows that $[u,V_1]\subseteq M_{(1)}$. Since $V_1$ maps onto $\G_1$ under the canonical homomorphism $M_{(1)}\twoheadrightarrow \G_1$ and $\G_1$ generates the Lie algebra $\G_{+}$ by our remarks earlier in the proof, the nilpotent Lie algebra $M_{(1)}$ must be generated by $V_1$.
This gives $[u, M_{(1)}]\subseteq M_{(1)}$ forcing  $u\in\n_\g(M_{(1)})$. But then $u\in M$ by the maximality of $M$. This contradiction shows that the case where $S =\Lie(\mathcal{H})$ is impossible.
\subsection{}\label{3.9}
In the next four subsections we assume that $S=\psl_{kp}$ for some $k\in\Z_{>0}$. Recall that for a finite dimensional Lie algebra $L$ over $\Bbbk$ the {\it absolute toral rank} ${\rm TR}(L)$ denotes the maximal dimension of toral subalgebras in the restricted Lie algebras $\tilde{L}/\z(\tilde{L})$ where $\tilde{L}$ runs over the set of all finite dimensional $p$-envelopes of $L$. Since $p>2$, the restricted Lie algebra $\psl_{kp}$ contains a self-centralising torus of dimension $kp-2$. Similarly, $\g$ contains a self-centralising torus whose dimension equals $\rk(\g)$. Combining \cite{P87} and \cite{P90}
with \cite[Theorem~1.2.9]{Str04} one obtains that ${\rm TR}(\g)=\rk(\g)$ and $TR(\psl_{kp})=kp-2$. On the other hand, Skryabin's theorem says that
that ${\rm TR}(\g)\ge {\rm TR}(\G)$; see \cite[Theorem~5.1]{Skr98}. By \cite[Theorems~1.2.8(3) and ~1.2.7(1)]{Str04}, this gives
$$kp-2={\rm TR}(\psl_{kp})={\rm TR}(S)\le {\rm TR}(\bar{\G})\le {\rm TR}(\G)\le {\rm TR}(\g)=\rk(\g)\le 8.$$ Since $p>5$ when $G$ is of type ${\rm E}_8$, this yields $k=1$. Since $\dim \g> (p^2-2)p^{|\underline{n}|}$ where $|\underline{n}|=n_1+\cdots+n_m$, it must be that $m=0$ unless $\g$ is of type ${\rm E}_7$, $p=5$, and $(m,\underline{n})=(1,\underline{1})$.

The grading of $S$ is induced by the action of a $1$-dimensional torus
$\lambda(\Bbbk^\times)$
of $({\rm Aut}\,S)^\circ\cong {\rm PGL}_p$ on $S$ and
hence $S_{\ge 0}$ is a parabolic subalgebra of $S$.
However, it is no longer true in the present case that the degree derivation $t_0\in\Lie(\lambda(\Bbbk^\times))$  lies in $\ad S_0$. Essentially this is due to the fact that $\Der(\psl_p)\,\cong\, \mathfrak{pgl}_p$; see \cite[Lemma~2.7]{BGP}, for example.
Still
$\lambda(\Bbbk^\times)$ is contained in a maximal torus
$T$ of ${\rm PGL}_p$ and there is a basis of simple roots $\Delta$ in the root system of ${\rm Aut}\,S$ with respect to $T$ and a collection of non-negative integers $\{r_\alpha|\,\alpha\in\Delta\}$ such that
$\alpha(\lambda(t))=t^{r_\alpha}$ for all $\alpha\in\Delta$ and all $t\in\Bbbk^\times$.

If $m=0$ then \cite[Theorem~3.5.7.(6)]{Str04} shows that
the $S_0$-module $S_{-1}$ is semisimple and isogenic.
If $m>0$ then $S_0=\c_S(\lambda)$ is a proper standard Levi subalgebra of $S\cong \mathfrak{psl}_5$. Let $\Sigma_0$ be the root system of $S_0$ with respect to $T$.
Since $\Der(S)$ has type ${\rm A}_4$, the Lie algebra $S_0$ is either toral or
$\Sigma_0$ has type
${\rm A}_1$, ${\rm A}_1^2$, ${\rm A}_2$, ${\rm A}_2{\rm A}_1$ or ${\rm A}_3$. In any event, this implies that 
$S_{-1}$ is a semisimple $S_0$-module and all its irreducible submodules have dimension $1$, $2$, $3$, $4$ or $6$. Moreover, in the last two cases $S_{-1}$ is an irreducible $S_0$-module.
If $S_0$ is toral then $S_{-1}$ coincides with  the span of the roots vectors $e_{-\alpha}\in S$ such that $\alpha\in\Delta$ and $r_\alpha=1$. In particular, $\dim S_{-1}\le 4$. Taking all this into account it is not hard observe that
$S_{-1}$ always contains an irreducible $S_0$-submodule $U_0$ such that $\dim S_{-1}<p\dim U_0$.
Applying \cite[Theorem,~3.5.7(6)]{Str04} we now deduce that when $m>0$ the $S_0$-module $S_{-1}$ is semisimple and isogenic, too.
This implies that in all cases of interest the parabolic subalgebra $S_{\ge 0}$ is maximal in $S$. From this it is immediate that  $S=S_{-1}\oplus S_0\oplus S_1$.
\subsection{}\label{3.10}
The above discussion in (\ref{3.9}) also shows that $\Der(S)=\Bbbk t_0\oplus\ad S$ and
$$S_0\otimes O(m;\underline{n})\subseteq \G_0\subseteq \big(\Der_0(S)\otimes O(m;\underline{n})\big)\rtimes\big({\rm Id_S}\otimes \mathcal{D}\big)$$ for some transitive subalgebra $\mathcal{D}$ of $W(m;\underline{n})$.
Since all Cartan subalgebras of $\sl_p$ are toral the subalgebra $S_0$ of $S=\sl_p/\Bbbk 1$ contains a toral Cartan subalgebra of $S$, say $\t$. Clearly, $\t$ is a toral subalgebra of $\G_0=\bar{\G}_0$. Since the canonical homomorphism
$\eta\colon\,M_{(0)}\twoheadrightarrow \G_0$ is  restricted, there exists a toral subalgebra $\tilde{\t}$ in $M_{(0)}$ such that $\eta(\tilde{\t})\cong\t$; see \cite[Proposition~1.2.2]{Str04}, for example.

Since $\t\subset S_0$ is self-centralising we have that $\c_S(\t)\cap S_{-1}=0$. Consequently, $\c_{\bar{\G}}(\t)\cap \bar{\G}_{-1}=0$.
If $\G_{-2}=0$ then $\c_{\G}(\t)\subset \G_{\ge 0}$ forcing
$\c_\g(\tilde{\t})\subset M_{(0)}=M$. As this contradicts  our assumption that $M$ is non-regular we now deduce that $\G_{-2}=N_{-2}(\G)\ne 0$.
Since $N(\G)\subseteq \bigoplus_{i\le -2}\,N_i(\G)$ we have that $\G_1\cdot N_{-2}(\G)=0$.
The graded factor-space $\bar{N}(\G):=N(\G)/N(\G)^2$ carries a natural
$\bar{\G}$-module structure and $\bar{N}_{-i}(\G)\cong
N_{-i}(\G)$ for $i=2,3$ as vector spaces. Therefore, $\bar{N}_{-2}(\G)\ne 0$.
If $\bar{\G}_{-1}\cdot \bar{N}_{-2}(\G)=0$
then $\G_{-3}=[\G_{-1},\G_{-2}]=N_{-3}(\G)=0$
because $N_{-3}(\G)\cong \bar{N}_{-3}(\G)$.

Suppose $\bar{\G}\cdot\bar{N}(\G)=0$. Then the above shows that $N(\G)=\G_{-2}$ and $[\G_{\pm 1},N(\G)]=0$.
It is well known that the torus $\tilde{\t}\subset M_{(0)}$ is contained in a maximal torus of $\g$ and all such tori are
$(\Ad\,G)$-conjugate by \cite[11.8]{Borel} or \cite[Theorem~13.3]{Hum}.
Since $\t$ is a maximal torus of $S_0$ we have that $\dim\,\tilde{\t}=p-2$.
If $p\ge 7$ then $p-2>\frac{1}{2}\rk(\g)$.
In this case, the restriction of the (non-degenerate) Killing form of $\g$ to $\tilde{\t}$ is nonzero. Since $\tilde{\t}\subset M_{(0)}$ and $\t\subset S_0\subseteq [\G_{-1},\G_1]$, one of the composition factors
of the $S$-module $\bar{\G}=\G/\G_{-2}$ provides $S$ with a nonzero trace form. But since $S\cong\psl_p$ no such form can exist by \cite{Bl}
or \cite{Gar}.
This shows that if $\bar{\G}\cdot \bar{N}(\G)=0$ then $p=5$ and $\g$ is not of type ${\rm E}_8$. Arguing similarly one also observes that $G$ cannot be of type ${\rm G}_2$ or ${\rm F}_4$ (even when $p=5$).
\subsection{}\label{3.11}
Suppose $m=0$. Then $S_0\subseteq \G_0\subseteq
\Der_0(S)=S_0\oplus\Bbbk t_0$.
If $t_0\in \G_0$ then arguing as in (\ref{3.8}) we could find a toral element $\tilde{t}_0\in M_{(0)}$ with $\g_{\tilde{t}_0}\subseteq M$. Since this  contradicts our assumption that $M$ is non-regular, the equality $\bar{\G}=S$ must hold. In particular, this means that $\G_1=S_1$ and
$\G_0=S_0$ acts faithfully on $\G_1$.
From this it is immediate that the subalgebra $\c_\g({\rm nil}(M))$ of $M= M_{(0)}$ coincides with $M_{(1)}$. Then there exists a nonzero $e\in M_{(1)}$ such that $[e,[e,\g]]=\Bbbk e$; see
Corollary~\ref{max-root}. Its image $\bar{e}={\rm gr}_1(e)$ in $\G_1=S_1$ has the property that
$[\bar{e},[\bar{e}, \G_i]]=0$ for all $i\le -2$.

Since $\bar{\G}=S$ we have that $\bar{\G}=\bar{\G}^{(\infty)}$.
If all composition factors of the $\bar{\G}$-module
$\bar{N}(\G)$ are trivial then $\bar{\G}\cdot \bar{N}(\G)=\bar{\G}^{(\infty)}\cdot \bar{N}(\G)=0$. Then
$\bar{\G}_{-1}\cdot \bar{N}_{-2}(\G)=0$ and
our discussion in (\ref{3.10}) shows that $p=5$. Since
$\G_{-2}=[\G_{-1},\G_{-1}]$ is a homomorphic image of $\wedge^2\, \G_{-1}$
and $\dim\,\G_{-1}=k(5-k)$ for some $1\le k\le 4$
we have that $\dim\,\G_{-2}\le 15$. But then $\dim \g=
\dim\,S+\dim\,\G_{-2}\le 23+15=48$, a contradiction.

Let $V$ be a non-trivial composition factor of the $S$-module $\bar{N}(\G)$ and denote by $\rho\colon\,
S\to\gl(V)$ the corresponding representation of $S=\psl_p$. We shall regard $\rho$ as a representation of  $\sl_p=\Lie(\SL_p)$ by inflation. Our earlier remarks then show that $\rho(a)^2=0$ for some non-central element $a\in \sl_p$ (one can take $a=\bar{e}$ since $(\ad_{\G}\bar{e})^2=0$). It should be stressed at this point that the centre $\Bbbk I_p$ of $\sl_p$ acts trivially on $V$.
By Lemma~\ref{graded}, $\rho$ is a restricted representation of $S$. So, thanks to Curtis' theorem it can be obtained by differentiating an infinitesimally irreducible rational representation of $\SL_p$.
Since the $(\Ad\,\SL_p)$-stable set $\{X\in\sl_p\,|\,\,\rho(X)^2=0\}$ is non-central, Zariski closed and conical, it contains the orbit $\OO_{\rm min}$ of $\sl_p$.
This shows that $\rho(e_\alpha)^2=0$ for any root vector $e_\alpha\in \sl_p$.
Arguing as in
\cite[p.~72]{PSup} one then observes that $\rho$ is a fundamental representation of $\sl_p$, i.e. $V\cong L(\varpi_k)$ for some $1\le k\le p-1$.  But then the central element $I_p\in\sl_p$ acts on $V$ as $k\cdot {\rm Id}_V\ne 0$, a contradiction. As a result, $m\ge 1$.
\subsection{}\label{3.12}
Our discussion in (\ref{3.11})
implies that $p=5$, $\g$ is of type ${\rm E}_7$ and $O(m;\underline{n})=O(1;\underline{1})$. Then $$
\dim\,\bar{N}(\G)\le \dim \,N(\G)=\dim\,\g-\dim\,\bar{\G}<\dim\,\g-\dim\,S\otimes O(1;\underline{1})=133-115=18.
$$
We claim that $\G_{-3}=0$. Indeed, if this is not the case then $\bar{\G}_{-1}\cdot \bar{N}_{-2}(\G)\ne 0$.
Since $A(\bar{\G})=S\otimes O(1;\underline{1})$ is a perfect Lie algebra, this implies that one of the composition factors of the $A(\bar{\G})$-module $\bar{N}(\G)$ is non-trivial. This, in turn, shows that there is a composition factor $W$ of the $\bar{\G}$-module $\bar{N}(\G)$ such that $A(\bar{\G})\cdot W=W$. In this situation Block's theorem on derivation simple modules says that
there exists a faithful $S$-module $W_0$ such that $W=W_0\otimes O(1;\underline{1})$ as vector spaces; see \cite[Corollary~3.3.7]{Str04}.
As $\dim \,W\le \dim\,\bar{N}(\G)<18$ and $p=5$ we get $\dim\,W_0\le 3$. But then $23=\dim S\le \dim \gl(W_0)=9$. By contradiction the claim follows.

Let $M'$ denote the inverse image of $S_0\otimes O(1;\underline{1})$ under the canonical homomorphism $M_{(0)}\twoheadrightarrow \G_0$. Since $\dim\,S_{-1}=k(5-k)$ for some $1\le k\le 4$, we have that $\dim\,S_{\ge 0}\ge 23-6=17$. As a consequence, $\dim M'= \dim \G_{\ge 0}>\dim\big(S_{\ge 0}\otimes O(1;\underline{1})\big)\ge 85>
(\dim\,\g)/2.$ It follows that the restriction of the Killing form $\kappa$ of $\g$ to $M$ is nonzero.
Since $\ad M_{(1)}$ acts nilpotently on $\g$, it lies in the radical of the trace form
$\kappa_{\vert\, M'}$. Let $\kappa_L$ denote  the Killing form of a finite dimensional Lie algebra $L$. Since $M=M_{(0)}$ preserves each component $M_{(i)}$ of our filtration, we have that
\begin{equation}\label{kappa}\kappa(x,y)=\tr\big((\ad\,x)\circ(\ad\,y)\big)=
\kappa_\G({\rm gr}_0(x),{\rm gr}_0(y))\qquad\quad\ (\forall\,x,y\in M).\end{equation}
In view of the above this shows that the restriction of $\kappa_\G$ to $S_0\otimes O(1;\underline{1})$ is nonzero.  As $[\G_{\pm 1},\G_{-2}]=0$
by our earlier remarks and $S_0\otimes O(1;\underline{1})=[\G_{-1},\G_1]$, we have that $\kappa_\G(x,y)=
\kappa_{\bar{\G}}(x,y)$ for all $x,y\in S\otimes O(1;\underline{1})$ (as before we identify $\bar{\G}_0$ with $\G_0$).
This implies that the restriction of $\kappa_{\bar{\G}}$ to $A(\bar{\G})=S\otimes O(1;\underline{1})$ of $\bar{\G}$ is nonzero.
Since $A(\bar{\G})$ is the unique minimal ideal of $\bar{\G}$ and the form $\kappa_{\bar{\G}}$ is
$\bar{\G}$-invariant, the restriction
of $\kappa_{\bar{\G}}$ to $A(\bar{\G})$ must be
non-degenerate.
On the other hand, it is straightforward to see that
for any $s\in S$ and any $u\in A(\bar{\G})$ the linear operator  $$\ad_{A(\bar{\G})}\big(s\otimes x_1^{p-1}\big)\circ \ad_{A(\bar{\G})}u$$ is nilpotent and hence has zero trace. This contradiction shows that the case where $S=\psl_{kp}$ is impossible.
\subsection{}\label{3.13}
Suppose $S$ is a Lie algebra of Cartan type $W$. Since $\dim W(r;\underline{d})=rp^{|\underline{d}|}$ and $\dim S\le \dim \g$, the Lie algebra $S$ is in the following list:
$$W(1;\underline{1}), W(1;\underline{2}), W(1;\underline{3}), W(2;\underline{1}).$$
If $S=W(1;\underline{d})$ for $1\le |\underline{d}|\le 3$ then
we may assume that its grading is either natural or reverse thereof; see \cite[Theorem~4.7]{PS3}. In the second case,
$S_{-2}\ne 0$ and $\dim S_{-1}=1$. But then
$$0\ne S_{-2}\otimes O(m;\underline{n})=
\bar{\G}_{-2}=[\bar{\G}_{-1},\bar{\G}_{-1}]=
[S_{-1},S_{-1}]\otimes O(m;\underline{n})=0,$$
a contradiction. Thus, $S=\bigoplus_{i\ge -1}\,S_i$
and all graded components of $S$ are $1$-dimensional.
Moreover, there exists $t_0\in S_0$ such that $[t_0,x]=\bar{i}x$ for all $x\in S_i$.
It follows that $N(\G)=\bigoplus_{i\le -2}\,\G_i$, forcing $[\G_{-2}, \G_1]=0$. Equivalently,
$[M_{(-2)},M_{(i)}]\subseteq M_{(i-1)}$ for all $i\ge 0$.

The subspace $\Bbbk (t_0\otimes 1)$ is a $1$-dimensional torus contained in $\z(\G_0)$. As both $\G_0$ and $M_{(0)}$ are restricted, there exists a toral element $\tilde{t}_0\in M_{(0)}$ which maps onto $t_0$ under the canonical epimorphism $M_{(0)}\twoheadrightarrow \G_0$.
If $N(\G)=0$ then $\g_{\tilde{t}_0}\subset M_{(0)}$ contrary to our assumption that $M=M_{(0)}$ is
non-regular. This shows that $\G_{-2}\ne 0$.
As in (\ref{3.8}) we choose a subspace $V_{i}$ in
$M_{(i)}$ such that $M_{(i)}=V_i\oplus M_{(i+1)}$ and $[\tilde{t}_0, x]=\bar{i}x$ for all $x\in V_i$.
By our preceding remarks, $V_{-2}\ne 0$ and $[V_{-2},V_i]\subseteq M_{(i-1)}\cap M(\overline{i-2})\subset M_{(1)}$ for $i=1,2$.
Since $[V_{-2}, M_{(3)}]\subseteq M_{(1)}$ and $M_{(1)}=V_1\oplus V_2\oplus M_{(3)}$ we thus deduce
that $0\ne V_{-2}\subset \n_\g(M_{(1)})=M_{(0)}$. This contradiction shows that $S\ne W(1;\underline{d})$.
\subsection{}\label{3.14}
Suppose $S=W(2;\underline{1})$, a Lie algebra of dimension $2p^2$. Then $m=0$ and $\Der(S)=\ad S$ by \cite[Theorem~7.1.2(1)]{Str04}.
Hence $\bar{\G}=S$. In particular, this implies that $S_{-1}$ is an irreducible and faithful $S_0$-module.
By \cite[Theorem~4.7]{PS3}, any grading of $W(2;\underline{1})$ is given by assigning to
some generators  $u_1,u_2$ of the maximal ideal of $O(2;\underline{1})$ certain degrees $a_1,a_2\in \Z$. The element $u_1^{m_1}u_2^{m_2}\frac{\del}{\del u_k}\in W(2;\underline{1})$ with $k=1,2$ then acquires the degree $a_1m_1+a_2m_2-a_k$. The grading of $S$ obtained this way is said to have type $(a_1,a_2)$ with respect to $u_1,u_2$. To ease notation we assume that $u_i=x_i$ for $i=1,2$.
We write $S[i]$ for the $i$-th component of the standard grading of the Cartan type Lie algebra $S$ which has type $(1,1)$.

Suppose our grading of $S$ has type $(a_1,a_2)$. No generality will be lost by assuming that
$|a_1|\ge |a_2|$. Then $S_0$
is spanned by some $x_1^{i_1}x_2^{i_2}\del_k$
with $k=1,2$. We now follow very closely the argument in \cite[Lemma~4.15]{PS3}.
If $\{\del_1,\del_2,x_1\del_2\}\cap S_0=\emptyset$ then $$ \Bbbk x_1\del_1\oplus \Bbbk x_2\del_2\subseteq S_0\subseteq \Bbbk x_1\del_1\oplus \Bbbk x_2\del_2\oplus \Bbbk x_2\del_1\oplus \textstyle{\sum}_{i\ge 1}S[i].$$ In this case $S_0$ is solvable and $[S_0,S_0]\subseteq \Bbbk x_2\del_1\oplus\sum_{i\ge 1}S[i]$ acts nilpotently on $S$. Since  $S_{-1}$ is an irreducible and faithful $S_0$-module, this is impossible. Therefore,
$\{\del_1,\del_2,x_1\del_2\}\cap S_0\neq
\emptyset$, implying that either $a_2=0$ or
$a_1=a_2$. Since $S_{-1}\ne 0$ we now deduce that either
the grading of $S$ or its reverse has type $(1,1)$ or $(1,0)$.
As before, we set $Q:=\c_\g(M_{(1)})$, pick $e\in Q\cap\OO_{\rm min}$, and denote by
$\bar{Q}$ the image of $Q$ in $\G_0=M_{(0)}/M_{(1)}$.
Write $\bar{e}$ for the image of $e$ in $\bar{Q}$.

If the grading of $S$ has type $(-1,-1)$ then
$\G_1=S_{1}=\Bbbk\del_1\oplus\Bbbk\del_2$ and $S_i=0$ for $i\ge 2$. Furthermore, $\G_0=S_0=\sum_{i,j=1}^2\,\Bbbk(x_i\del_j)$ acts on $S_{1}$ faithfully.
From this it is immediate that $\bar{Q}=0$. But then $\bar{e}={\rm gr}_1(e)$ is a nonzero linear combination of $\del_1$ and $\del_2$. Since $(\ad\,\bar{e})^3=0$ and $p>3$, this is impossible. So this case cannot occur. If the grading of $S$ has type $(-1,0)$ then $S_1=\sum_{i=0}^{p-1}\,(\Bbbk x_2^i\del_1)$ and $S_i=0$ for $i\ge 2$. Moreover, the Lie algebra $$S_0\,=\,\textstyle{\sum}_{i=0}^{p-1}\,(\Bbbk x_2^i\del_2)\,\oplus\,\textstyle{\sum}_{i=0}^{p-1}\,\Bbbk x_2^i(x_1\del_1)$$ is isomorphic to $W(1;\underline{1})\ltimes O(1;\underline{1})$
where $\sum_{i=0}^{p-1}\,\Bbbk x_2^i(x_1\del_1)\cong O(1;\underline{1})$ is an abelian ideal of $S_0$.
It is straightforward to check that $\G_0=S_0$ acts faithfully on $\G_1=S_1$. So the equality $\bar{Q}=0$ must hold. Let
$\bar{R}=\Bbbk \del_2\oplus S_1$ and denote by $R$  the inverse image of $\bar{R}$ in $M_{(0)}$. Then $[R,R]\subseteq M_{(1)}$ consists of nilpotent elements of $\g$. In view of Corollary~\ref{max-root} we may assume that $[R,e]\subseteq \Bbbk e$.
Then $[\del_2,\bar{e}]=0$ forcing $\bar{e}=\lambda\del_1$ for some $\lambda\in\Bbbk$. This, however, contradicts the fact that $(\ad \del_1)^4\ne 0$.

As a result, we may assume that the grading of $S$ is either standard or has type $(1,0)$.
These are the reverse gradings of the ones considered earlier.
In particular, $\bar{\G}_{-2}=S_{-2}=0$. Let $\t=\Bbbk (x_1\del_1)\oplus\Bbbk(x_2\del_2)$, a maximal toral subalgebra of $\bar{\G}=S$ contained in $\G_0=S_0$. Since $\G_0$ is restricted, there exists a $2$-dimensional toral subalgebra $\tilde{\t}$ in $M_{(0)}$ which maps onto $\t$ under the canonical homomorphism $M_{(0)}\twoheadrightarrow \G_0$; see
\cite[Proposition~1.2.2]{Str04}. Since $M$ is non-regular and $\c_{\bar{\G}}(\t)=\c_S(\t)=\t$ it must be that $N(\G)\ne 0$. So the above yields $0\ne \G_{-2}\subseteq N(\G)$. Since $\Der(S)=\ad S$, the Lie algebra $\G_0$ contains the degree derivation $t_0$
associated with our present grading of $S$. As before, we find a toral element $\tilde{t}_0\in M_{(0)}$ which maps onto $t_0$ under the canonical
homomorphism $M_{(0)}\twoheadrightarrow \G_0$ and pick, for any $i\in\Z$, a subspace $V_i$ of $\g$ such that $M_{(i)}=V_i\oplus M_{(i+1)}$ and $[\tilde{t}_0,v]=\bar{i} v$ for all $v\in V_i$. Repeating verbatim the argument used in the $W(1;\underline{n})$-case we obtain that
$[V_{-2}, M_{(i)}]\subseteq M_{(1)}$ for $i=1,2$. It follows that $0\ne V_{-2}\subset \n_\g({\rm nil}(M))=M$ which is impossible. This enables us to conclude that the case where $S\cong W(r;\underline{d})$ cannot occur.
\subsection{}\label{3.15}
Suppose $S$ is of special Cartan type. Then it follows from  \cite[6.3]{Str04} that  $S$ is one of
$$S(r;\underline{d})^{(1)},\,\,S(r;\underline{d};\Phi(\tau))^{(1)},\,\,S(r;\underline{d};\Phi(l))^{(1)},$$ where $r\ge 3$ and $1\le l\le r$. The Lie algebras in this list have dimensions
$(r-1)(p^{|\underline{d}|}-1)$,
$(r-1)(p^{|\underline{d}|}-1)$ and $(r-1)p^{|\underline{d}|}$, respectively. As a consequence, $\dim S\ge 2(p^3-1)\ge 248$. Since $\dim\g\ge\dim S$ and $p>5$ if $\g$ is of type ${\rm E}_8$, we see that this case is impossible.
\subsection{}\label{3.16}
Suppose $S$ is a Hamiltonian Lie algebra, that is $S\cong H(2r;\underline{d};\Phi)^{(2)}$, where $r\ge 1$. This case is more complicated because several entirely different situations may occur here. We first note that $\dim S\ge \dim H(2r;\underline{d})^{(2)}$ because $H(2r;\underline{d};\Phi)^{(2)}$ is a filtered deformation of a graded Lie algebra sandwiched between the {\it graded} Hamiltonian algebras $H(2r;\underline{d})^{(2)}$ and $CH(2r;\underline{d})$; see \cite[Theorem~6.1.2]{Str04}, for instance. Hence
$\dim S\ge \dim H(2r;\underline{1})^{(2)}=
p^{2r}-2$. If $r\ge 2$ then $248\ge \dim \g\ge \dim S\ge 5^4-2$, a contradiction. In view of \cite[Theorem~6.3.10]{Str04} this shows that $S$ is  one of
$$H(2;\underline{d})^{(2)},\,\,
H(2;\underline{d};\Phi(\tau))^{(1)},\,\,H(2;\underline{d};\Phi(l)),\qquad l=1,2,$$
where $\underline{d}\in\{(1,1), (2,1), (1,2)\}$. Besides,
$H(2;(1,2);\Phi)^{(2)}\,\cong\,
H(2;(2,1);\Phi)^{(2)}$ unless $\Phi=\Phi(l)$.
\subsection{}\label{3.17}
We first assume that $A(\bar{\G})=S\otimes O(m;\underline{n})$ and $m\ge 1$. Then $\dim \G>(\dim S)p^{|\underline{n}|}\ge (p^2-2)p$. From this it is immediate that $p=5$, $O(m;\underline{n})=O(1;\underline{1})$, $\g$ is of type ${\rm E}_7$, and $S$ is one of $H(2;\underline{1})^{(2)}$, $H(2;\underline{1},\Phi(\tau))^{(1)}$, $H(2;\underline{1}; \Phi(1))$.

Let $\mathcal{M}(S)$ denote the standard maximal subalgebra of the Cartan type Lie algebra $S$. By Kreknin's theorem, $\mathcal{M}(S)$ is invariant under all automorphisms of $S$; see \cite[Theorem~4.2.6]{Str04}. Since our grading of $S$ is induced by the action of a $1$-dimensional torus  $T_0\subset {\rm Aut}(S)$, this implies that $\mathcal{M}(S)\,=\,\bigoplus_{i\in\Z}\,\big(\mathcal{M}(S)\cap S_i\big)$.
Since $\mathcal{M}(S)$ is maximal in $S$, it cannot consist of nilpotent elements of the minimal $p$-envelope $S_p$ of $S$. Since each $S_i\cap \mathcal{M}(S)$ with $i\ne 0$ acts nilpotently on $S$, it follows from the Engel--Jacobson theorem that $S_0\cap \mathcal{M}(S)$ contains a  
non-nilpotent element of $S_p$. On the other hand, it is immediate from \cite[Lemma~7.1.1.(3)]{Str04}
that $\mathcal{M}(S)$ is a restricted
subalgebra of $S_p$.
It follows that so is $\mathcal{M}(S)\cap S_0\,=\,\mathcal{M}(S)^{T_0}$.
As a consequence, $\mathcal{M}(S)\cap S_0$ contains a nonzero toral element, $h_0$ say. It is straightforward to see that
$S_p\otimes 1$ coincides with the $p$-envelope of $S\otimes 1$ in $\Der\big(S\otimes O(1;\underline{1})\big)$.

Let $C_0=\c_{\bar{\G}_0}(h_0\otimes 1)$.
Clearly, $h_0\otimes O(1;\underline{1})\subset
C_0$.
Recall that
$$S_0\otimes O(1;\underline{1})\subset \bar{\G}_0\subseteq \big(\Der_0(S)\otimes O(1;\underline{1})\big)\rtimes \big({\rm Id}_S\otimes\mathcal{D}\big)$$ for some transitive Lie subalgebra $\mathcal{D}$ of $W(1;\underline{1})$. Let $\m=O(1;\underline{1})x_1$ denote the maximal ideal of $O(1;\underline{1})=O(x_1)$. Since $\ad_{\bar{\G}}(h_0\otimes 1)$ is semisimple and $\mathcal{D}$ is transitive, $C_0$
contains an element
$c=(d\otimes f)+({\rm Id}_S\otimes D)$ with $d\in\Der_0(S)$, $f\in O(1;\underline{1})$,
$D\in W(1;\underline{1})$ such that $D(x_1)\not\in \m$.  Since $c\in C_0$ it must be that $[d,h_0]=0$.
By construction, $[c,h_0\otimes x_1]=h_0\otimes D(x_1)$. Since $h_0^{[p]}=h_0$ and $D(x_1)^p$ is a {\it nonzero}
scalar in $O(1;\underline{1})$, the $p$-closure of
$[c,h_0\otimes x_1]$ in the restricted Lie algebra $\bar{\G}_0$ contains
$h_0\otimes 1$.

Since the canonical homomorphism $\eta\colon\,M_{(0)}\twoheadrightarrow \bar{\G}_0$ is restricted, there is a toral element $\tilde{h}_0\in M_{(0)}$ such that $\eta(\tilde{h}_0)=h_0\otimes 1$. Let $\tilde{C}_0=\c_{M}(\tilde{h}_0)$. Since
$\ad \tilde{h}_0$ is semisimple, we have that $\eta(\tilde{C}_0)=C_0$ and
$\tilde{C}_0\cap\ker\eta=\c_{M_{(1)}}(\tilde{h}_0)$. Since both $c$ and $h_0\otimes x_1$ are in $C_0$, the above discussion implies  that the $p$-closure of $[\tilde{C}_0,\tilde{C}_0]$
in $M$ contains an element of the form
$\tilde{h}_0+y$ for some $y\in \tilde{C}_0\cap M_{(1)}$. As $(\tilde{h}_0+y)^{[p]^N}=\tilde{h}_0^{[p]^N}=
\tilde{h}_0$ for $N\gg 0$, it follows that the
$p$-closure of the derived subalgebra of $\g_{\tilde{h}_0}$ contains $\tilde{h}_0$. Since it follows from Jacobson's formula that for any Levi subalgebra $\l$ of $\g$ the Lie algebra $[\l,\l]$ is is closed under taking $p$-th powers in $\g$, this yields $\tilde{h}_0\in[\g_{\tilde{h}_0},\g_{\tilde{h}_0}]$.

We now wish to estimate $\dim \g(\tilde{h}_0,i)$ for all $i\in \F_p^\times$. First we note that since
$\ad\,\tilde{h}_0$ is toral and preserves all components $M_{(j)}$ of our filtration, the equality
$\dim \g(\tilde{h}_0,i)=\dim \G(h_0\otimes 1,i)$ must hold for all $i\in\F_p$. Next we observe that for any $k\ge 1$ the factor space $N(\G)^k/N(\G)^{k+1}$ is a graded $\bar{\G}$-module. This yields
\begin{eqnarray}\label{dim-gr}\qquad \dim \g(\tilde{h}_0,i)=\dim\bar{\G}(h_0\otimes 1,i)+
\textstyle{\sum}_{k\ge 1}\,\dim \big(N(\G)^k/N(\G)^{k+1}\big)(h_0\otimes 1,i).
\end{eqnarray}
Clearly, each $N(\G)^k/N(\G)^{k+1}$ is a graded $\bar{\G}$-module.
Since $\bar{\G}$ is semisimple and $\G$ is a restricted $(\ad\,\G_0)$-module, applying Lemma~\ref{graded} shows that all composition factors of the $(S\otimes 1)$-modules $\bar{\G}$ and $N(\G)^k/N(\G)^{k+1}$ with $k\ge 1$ are restrictable.

If $S$ is one of $H(2;\underline{1})^{(2)}$ or $H(2;\underline{1};\Phi(\tau))^{(1)}$ then any non-trivial irreducible restricted $S_p$-module $V$ is either isomorphic to $S$ (the adjoint $(\ad\,S)$-module) or is induced from an irreducible restricted $\mathcal{M}(S)$-module $V_0$; see \cite[\S\,5]{FSW}.
%If $S= H(2;\underline{1};\Phi(1))$ then combining
%the main result of \cite{RH}
%with \cite[Theorem~4.3]{FSW} it is straightforward to see %that any non-trivial irreducible restricted $S_p$-module $V$ %is either induced from an irreducible restricted 
%$\mathcal{M}(S)$-module $V_0$ or has the form $V=O(2;%\underline{1})/\Bbbk1$
%with the action of $S$ induced by an embedding
%$S\hookrightarrow W(2;\underline{1})$.
Note that $\mathcal{M}(S)/{\rm nil}(\mathcal{M}(S))$ is isomorphic to $\sl_2$ and $S/\mathcal{M}(S)$ is a
$2$-dimensional irreducible module over
$\mathcal{M}(S)/{\rm nil}(\mathcal{M}(S))$. Since $h_0\in \mathcal{M}(S)$ this implies that if $V$ is induced from $V_0$ then $\dim V(h_0,i)=p(\dim V_0)$ for all $i\in \F_p$. If $V\cong S$ then one derives that $\dim V(h_0,i)=p$ for all $i\in\F_p^\times$ by passing from the filtered Cartan type algebra $S$ to the corresponding graded Lie algebra ${\rm gr}(S)\supseteq H(2;\underline{1})^{(2)}$.
If $S= H(2;\underline{1};\Phi(1))$ then the description in \cite[10.4]{Str09} shows that $h_0\in S_0$ is contained in a $2$-dimensional torus of $S_p$.
Thanks to \cite[Lemmas~4.7.2]{BW} this yields that 
for any non-trivial restricted $S_p$-module $V$ all eigenspaces
$V(h_0,i)$ with $i\in\F_p^\times$ have the same dimension divisible by $p$. In conjunction with (\ref{dim-gr}) this entails that  in all cases
$\dim \g(\tilde{h}_0,i)=\dim \g(\tilde{h}_0,j)$ for all $i,j\in\F_p^\times$.
%and $V=O(2;\underline{1})/\Bbbk1$ because the action of $S$ %on $V$ is induced by that of $W(2;\underline{1})$.

As a result, $\tilde{h}_0$ is a $p$-balanced toral element of $\g$.
Since in the present case $p=5$ and $G$ is of type ${\rm E}_7$, applying Proposition~\ref{balanced}(ii) yields that the Levi subalgebra $\g_{\tilde{h}_0}$ has type ${\rm D}_4{\rm A}_1$.
But then the derived subalgebra of $\g_{\tilde{h}_0}$ is semisimple and hence cannot contain its central element $\tilde{h}_0$. This contradiction shows that the case where $m\ge 1$ cannot occur.
\subsection{}\label{3.18}
In the next five subsections we assume that $A(\bar{\G})=S\cong H(2;\underline{1})^{(2)}$. In this case we may regard $\bar{\G}$ as a graded subalgebra of $\Der\big(H(2;\underline{1})^{(2)}\big)$.
Recall that $S$ is spanned by all $$D_H(f)\,:=\,\partial_1(f)\partial_2-
\partial_2(f)\partial_1$$ where $f\in O(2;\underline{1})$ is a linear combination of $x_1^{a_1}x_2^{a_2}$ with $0<a_1+a_2<2(p-1)$, and the standard maximal subalgebra $\mathcal{M}(S)$ has a basis consisting of all
$D_H(x_1^{a_2}x_2^{a_2})$ with $2\le a_1+a_2<2(p-1)$.
By \cite[Theorem~3.3]{PS2}, every grading of $\Der
\big(H(2;\underline{1})^{(2)}\big)$ is induced by a suitable $(a_1,a_2)$-grading of $W(2;\underline{1})$ (which contains the derivation algebra of $H(2;\underline{1})^{(2)}$).
Since $\bar{\G}_{-1}$ is an irreducible $\G_{0}$-module, no generality will be lost by assuming that $(a_1,a_2)\in\big\{(-1,-1), (-1,0), (1,0), (1,1)\big\}$; see \cite[Corollary~3.4(4)]{PS2}.
If $a_1=a_2=-1$ then $\G_i=0$ for $i\ge 2$, $\bar{\G}_1=
\Bbbk D_H(x_1)\oplus\Bbbk D_H(x_2)=\Bbbk \del_2\oplus \Bbbk\del_1$, and $$\sl_2\cong S_0\subseteq \bar{\G}_0\subseteq \gl_2$$ acts irreducibly on $\bar{\G}_1$.
From this it is immediate that $M_{(1)}\cong \bar{\G}_1$ and $\c_\g(M_{(1)})$ coincides with $M_{(1)}$ (recall that $\c_\g(M_{(1)})\subseteq M_{(0)}$ by the maximality of $M$).
Since $p\ge 5$, every nonzero element $D \in \Bbbk \del_1\oplus \Bbbk \del_2$ has the property that $(\ad D)^4(\bar{\G})\ne 0$. Arguing as before it is now easy to observe that this contradicts  Corollary~\ref{max-root}.
So the case where $(a_1,a_2)=(-1,-1)$ is impossible.
\subsection{}\label{3.19}
 If $(a_1,a_2)=(-1,0)$ then \cite[Corollary~3.4]{PS2} implies that $\bar{\G}_i=0$ for $i\ge 2$ and $\bar{\G}_0$ is sandwiched between $S_0=\bigoplus_{i=0}^{p-1}\,
 \Bbbk (ix_2^{i-1}x_1\del_1-x_2^i\del_2)\cong W(1;\underline{1})$ and $S_0\oplus\Bbbk z$ where $z=x_1\del_1$ acts on $\bar{\G}_1$ as a scalar operator. Also, $S_1=\bigoplus_{i=0}^{p-2}\,\Bbbk x_2^i\del_1\cong O(1;\underline{1})/\Bbbk 1$ has codimension $\le 1$ in $\bar{\G}_1\subseteq S_1\oplus \Bbbk x_2^{p-1}\del_1$ which is indecomposable as a $\Bbbk \del_2$-module. We
 now see that as in the previous case $M_{(1)}\cong\bar{\G}_1$ is abelian and $\c_\g(M_{(1)})=M_{(1)}$. We pick an element $v\in M_{(0)}$ which maps onto $\del_2\in S_0$ under the canonical homomorphism $M_{(0)}\to \G_{(0)}=M_{(0)}/M_{(1)}$ and set $R=\Bbbk v$. Since
 $\c_{\bar{\G}_1}(\del_2)=\Bbbk \del_1$ it follows from Corollary~\ref{max-root} that there exists an element $e\in\OO_{\rm min}\cap M_{(1)}$ which maps
 onto $\del_1$ under the epimorphism $M_{(1)}\twoheadrightarrow \G_{(1)}$. As $(\ad e)^3=0$ this contradicts the fact that $(\ad \del_2)^4\big(H(2;\underline{1})^{(2)}\big)\ne 0$. Hence the case where $(a_1,a_2)=(-1,0)$ cannot occur either.
 \subsection{}\label{3.20}
Now suppose $(a_1,a_2)=(1,0)$. Then
\cite[ Corollary~3.4(2)]{PS2} implies that $S=\bigoplus_{i=-1}^{p-2}\,S_i$ and $\bar{\G}=\bigoplus_{i=-1}^r\,\bar{\G}_i$ where
$r=p-2$ or $r=p-1$. Moreover, if $r=p-1$ then $\dim\G_r=\dim \bar{\G}_r=1$. Since
$\dim \G_r=\dim M_{(r)}$ the equality $r=p-1$ would mean that $M$ normalises a $1$-dimensional subspace $\Bbbk e\subset\N(\g)$.  As the latter would entail that $M\subseteq \n_\g(\Bbbk e)$ lies in the optimal parabolic subalgebra of $e$, it must be that $r=p-2$.

Since $\bar{\G}$ is a graded Lie algebra, the centre of $\bar{\G}_{>0}$ is a graded subalgebra of $\bar{\G}$.
Since $S\cong \ad S$ coincides with the unique minimal ideal of $\bar{\G}\subseteq \Der(S)$ and $S_{p-2}\ne 0$, this forces
$\z(\bar{\G}_{>0})=\bar{\G}_{p-2}$. From this it is immediate that
$\c_\g({\rm nil}(M))\,=\,\z(M_{(1)})\,=\,M_{(p-2)}$.

Applying \cite[Corollary~3.4(2)]{PS2} it is straightforward to see that
the endomorphism $\ad D_H(x_1)=\del_2\in \ad S_0$ acts on
$\G_{p-2}=\bar{\G}_{p-2}$ as a single Jordan block
and annihilates $D_H(x_1^{p-1})=-x_1^{p-2}\del_2\in S_{p-2}$.
Pick $e\in M_{(p-2)}$ which maps onto $D_H(x_1^{p-1})$
under the canonical epimorphism $M_{(p-2)}\twoheadrightarrow \bar{\G}_{p-2}$. Using Corollary~\ref{max-root} and arguing as in (\ref{3.19}) we now observe that $e\in\OO_{\rm min}$.

Let $E$ be a non-trivial restricted irreducible $S$-module.
We have already mentioned in (\ref{3.17}) that either $E\cong S$, the adjoint $S$-module, or
$E$ is induced from an
irreducible restricted module $E_0$ over $\mathcal{M}(S)/
{\rm nil}(\mathcal{M}(S))\cong\sl_2$.
Let $\rho\colon S\rightarrow \gl(E)$ denote the corresponding representation.  If $\rho=\ad$ then
$\rho\big(D_H(x_1^{p-1})\big)$ is injective on the span of $\big\{D_H(x_2^i),\,D_H(x_1x_2^i)\,|\,\,1\le i\le p-1\big\}$ implying that $\dim\,{\rm Im}\,\rho\big(D_H(x_1^{p-1})\big)\ge 2(p-1)$.

Now suppose $E\cong u(S)\otimes_{u(\mathcal{M}(S))}\,E_0$
and write $x\cdot v$ for $(\rho(x))(v)$ where $x\in S$ and $v\in E$.
Let $w \in E$ be such that $D_H(x_1^k)\cdot w=0$ for $3\le k\le p-1$. Recall that
$D_H(x_1^k)=kx_1^{k-1}\partial_2$. Also, $x_1\partial_2=\frac{1}{2}D_H(x_1^2)\in \mathcal{M}(S)$ acts nilpotently on the subspace $E_0$ of $E$ and commutes with $\partial_2$.
We wish to determine 
$D_H(x_1^{p-1})\cdot \partial_1^{p-2}\cdot w$. As
$D_H(x_1^{p-1})=-x_1^{p-2}\del_1$ we require an inductive formula for $(x_1^k\partial_2)\cdot \partial_1^k\cdot w$ valid for $k\in\{1,\ldots, p-2\}$.
Clearly, $(x_1\partial_2)\cdot\partial_1\cdot w=
-\partial_2\cdot w+\partial_1\cdot (x_1\partial_2)(w)$.
Suppose the equality
\begin{equation}\label{parr}
(x_1^k\partial_2)\cdot \partial_1^k\cdot w\,=\,
k!(-1)^k\partial_2\cdot w+\lambda_k\partial_1\cdot (x_1\partial_2)(w)
\end{equation}
holds for some  $\lambda_k\in \Bbbk$
(this is true for $k=1$ with $\lambda_1=1$).
Then
\begin{eqnarray*}
(x_1^{k+1}\partial_2)\cdot\partial_1^{k+1}\cdot w&=&
[x_1^{k+1}\partial_2,\partial_1]\cdot\partial_1^k\cdot w
+\partial_1\cdot(x_1^{k+1}\partial_2)\cdot\partial_1^k\cdot w\\
&=&-(k+1)(x_1^k\partial_2)\cdot\partial_1^k\cdot w+(k+1)!(-1)^k\partial_1\cdot(x_1\partial_2)(w)\\
&=&(k+1)!(-1)^{k+1}\partial_2\cdot w+\big((k+1)!(-1)^k-(k+1)\lambda_k\big)\partial_1\cdot(x_1\partial_2)(w).
\end{eqnarray*}
This shows that (\ref{parr}) holds for all $k\in\{1,\ldots, p-2\}$.  Since $(p-2)!=1$ in $\Bbbk$ we get
\begin{equation}\label{par-k} D_H(x_1^{p-1})\cdot\partial_1^{p-2}\cdot w=
\partial_2\cdot w- \lambda_{p-2}\partial_1\cdot(x_1\partial_2)(w).
\end{equation}
Next we show that the equality
\begin{equation}\label{par-kk} (x_1^k\partial_2)\cdot \partial_1^{k+1}\cdot w=(k+1)!(-1)^k\partial_1\partial_2\cdot w+\mu_k\partial_1^2\cdot(x_1\partial_2)(w)\ \mbox{ with }\ \mu_k\in\Bbbk\end{equation}
holds for all $k\in\{1,\ldots,p-2\}$.
Since $$(x_1\partial_2)\cdot\partial_1^2\cdot w=[x_1\partial_2,\partial_1]\cdot\partial_1\cdot w +\partial_1\cdot[x_1\partial_2,\partial_1]\cdot w+\partial_1^2\cdot(x_1\partial_2)(w)=-2\partial_1\partial_2\cdot w+\partial_1^2\cdot(x_1\partial_2)(w)$$ the statement holds for $k=1$. If it holds for $k=m$ then using (\ref{parr}) we get
\begin{eqnarray*}
(x_1^{m+1}\partial_2)\cdot\partial_1^{m+2}\cdot w
&=&\partial_1\cdot(x_1^{m+1}\partial_2)\cdot \partial_1^{m+1}\cdot w+[x_1^{m+1}\partial_2,\partial_1]\cdot\partial_1^{m+1}\cdot w\\
&=&(m+1)!(-1)^{m+1}\partial_1\partial_2\cdot w
+\lambda_m\partial_1^2\cdot(x_1\partial_2)(w)-
(m+1)(x_1^m\partial_2)\cdot\partial_1^{m+1}\cdot w\\
&=&\big((m+1)!(-1)^{m+1}-(m+1)!(-1)^m(m+1)\big)\cdot \partial_1\partial_2\cdot w\\
&+&
\big(\lambda_m-(m+1)\mu_m\big)\partial_1^2\cdot(x_1\partial_2)(w)\\
&=&(m+2)!(-1)^{m+1}\partial_1\partial_2\cdot w+\mu_{m+1}\partial_1^2\cdot(x_1\partial_2)(w).
\end{eqnarray*}
This proves (\ref{par-kk}). As a consequence,
\begin{equation}\label{parrr}
D_H(x_1^{p-1})\cdot\partial_1^{p-1}\cdot w=
-\partial_1\partial_2\cdot w- \mu_{p-2}\partial_1^2\cdot(x_1\partial_2)(w).
\end{equation}
Note that we can take for $w$ any vector of the form $\partial_2^{i}\otimes v$ with
$v\in E_0$ and $0\le i\le p-2$.
Applying (\ref{par-k}) and (\ref{parrr}) it is now straightforward to see that
$\rho\big(D_H(x_1^{p-1})\big)$ is injective on the subspace $\bigoplus_{i=0}^{p-2}\big((\del_1^{p-1}\del_2^i\otimes E_0)\oplus(\del_1^{p-2}\del_2^i\otimes E_0)\big)$ of $E$.
It follows that ${\rm Im}\,\rho\big(D_H(x_1^{p-1})\big)$ has dimension $\ge 2(p-1)(\dim E)/p^2$.

Let ${\rm gr}(\G)$ denote the $S$-module $\bar{\G}\oplus \sum_{k\ge 1}N(\G)^k/N(\G)^{k+1}$
and write $\varphi$ for the corresponding representation of $S$. Let $E_1,\ldots,E_r$ be all induced composition factors of the $S$-module
${\rm gr}(\G)$ and write $l$ for the multiplicity of $S$ in ${\rm gr}(\G)$. Put
$$s:=l+\textstyle{\sum}_{i=1}^r\,(\dim E_i)/p^2.$$
Repeating almost verbatim the argument used in (\ref{3.17}) one observes that $\g$ contains a $p$-balanced toral element $\tilde{h}_0$ such that $\dim\g(\tilde{h}_0,i)=ps$.
for all $i\in\F_p^\times$.
On the other hand, since $D_H(x_1^{p-1})={\rm gr}(e)$ for some $e\in M_{(p-2)}\cap \OO_{\rm min}$, the above discussion yields
\begin{equation}\label{image}
\dim\OO_{\rm min}=\,\dim \big({\rm Im}\,\ad e\big)\,\ge\, \dim\big({\rm Im}\,\varphi\big(D_H(x_1^{p-1})\big)\,\ge\,
2(p-1)s.
\end{equation}
If $G$ is of type ${\rm E}_6$ then Proposition~\ref{balanced}(i) shows that $p=5$ and $s=3$. But then $2(p-1)s=24>22=\dim\OO_{\rm min}$.
As this contradicts (\ref{image}) this case cannot occur. If $\g$ is of type ${\rm E}_7$ then Proposition~\ref{balanced}(ii) yields that $p=5$ and $s=5$.
Since $\dim \OO_{\rm min}=34<40=2(p-1)s$ this case cannot occur either.
\subsection{}\label{3.21}
Suppose $G$ is of type ${\rm E}_8$. Then
Proposition~\ref{balanced} says that $p=7$ or $p=11$. Moreover, if $p=7$ then
$\g_{\tilde{h}_0}$ has dimension $80$ or $38$. If $\dim\g_{\tilde{h}_0}=38$ then $s=5$ and $\dim\OO_{\rm min}=58<60=2(p-1)s$ which violates (\ref{image}).

Suppose $p=7$ and $\dim\g_{\tilde{h}_0}=80$. Then $s=4$ and part~(f) of
the proof of Proposition~\ref{balanced}
shows that no generality will be lost by assuming that
 $\widetilde{h}_0=2t_7+2t_8$.
 In this case, there exists an element $e\in\OO({\rm D}_4)$ which admits an optimal
cocharacter $\lambda\colon\,\Bbbk^\times\rightarrow
G$ with the property that $e\in\g(\lambda_e,2)$ and $({\rm d}\lambda)(1)=\widetilde{h}_0$. Moreover, it follows from (\ref{e8-7-2})
and the preceding discussion that the weight space $\g(\lambda,2)$ coincides with the $2$-eigenspace of $\ad \tilde{h}_0$. In view of \cite[Theorem~2.3]{P03} this implies that the subset
$\g(\tilde{h}_0,2)\cap \OO({\rm D}_4)$ is Zariski dense in
$\g(\tilde{h}_0,2)$. As $\OO({\rm D}_4)\subset \N_p(\g)$ by \cite[Table 9]{Law} and \cite[Theorem~4.1]{PSt} we now deduce that $x^{[p]}=0$ for all $x\in\g(\tilde{h}_0,2)$.
We may assume that the image of $\widetilde{h}_0$ in $\G_0$ equals $2D_H(x_1x_2)$.

As $[2D_H(x_1x_2),D_H(x_2)]=2D_H(x_2)$ we can find an element
$u\in \g(\tilde{h}_0,2)\cap M_{(0)}$ which maps onto $D_H(x_2)=-\del_1$ under the epimorphism
$M_{(0)}\twoheadrightarrow \G_0$.
One checks directly that ${\rm Im}\big(\ad_S D_H(x_2)^{p-1}\big)$ has dimension $p-1$.
Since $D_H(x_2)\not\in S_{(0)}$, it is straightforward to see for any irreducible restricted representation $\rho\colon\,S\to \gl(E)$ such
that $E= u(S)\otimes_{u(S_{(0)})}\,E_0$ as $S$-modules,
$\rho\big(D_H(x_2)\big)$ has $p\dim E_0$ Jordan blocks of size $p$.
Since $u^{[p]}=0$ by
our earlier remark, the definition of $s$ in (\ref{3.20}) shows that $\ad  u$ has at least
$4(p-1)=24$
Jordan blocks of size $p=7$.

As $\dim\, (\Ad G)\, u\le \dim\, (\Ad G)\, e=168$,
combining \cite[Table~9]{Law} with \cite[Theorem~4.1]{PSt} now yields $u\in\OO({\rm D}_4)$.
Note that $D_H(x_1^k x_2^{p-1})\in S_{k-1}$ for $1\le k\le p-2$. Since  $(\ad u)^p=0$ and $$\big(\ad D_H(x_2)\big)^{p-1}\big(D_H(x_1^kx_2^{p-1})\big)\ne 0,\qquad
 \big[2D_H(x_1x_2),D_H(x_1^kx_2^{p-1})\big]=
-2(k+1)D_{H}(x_1^kx_2^{p-1}),$$
it follows that $\g_u\cap \g(\tilde{h}_0,2j)\ne 0$
for five values of $j\in\F_p^\times$. This, however, contradicts \cite[p.~131]{LT11}. Therefore, the case where $p=7$ cannot occur.

If $p=11$ then $[M_{(5)}, M_{(5)}]\subseteq M_{(10)}=M_{(p-1)}=0$ and $$\dim M_{(5)}\,=\,\textstyle{\sum} _{i=5}^9\, \G_i\ge \dim\textstyle{\sum} _{i=5}^9\, S_i\,=\,4p+(p-1)=54.$$
Hence $\g$ contains an abelian Lie subalgebra of dimension $54$. The set of all $d$-dimensional abelian Lie subalgebras of $\g$ is a closed subset of the Grassmannian ${\rm Gr}(d,\g)$ invariant under the action of a maximal torus $T$ of $G$. So it follows from the Borel fixed-point theorem that the root system $\Phi=\Phi(G,T)$ contains an {\it abelian} subset $\mathcal A$ of size $54$
(it has the property that $\beta+\gamma\not\in\Phi$ for all $\beta,\gamma\in \mathcal{A}$). However, an old result of Malcev
says that in type ${\rm E}_8$ the size of
any abelian subset of $\Phi$ cannot be bigger than $36$; see \cite{Mal}. It is not hard to see that this result of Malcev is still valid in our situation; see \cite{PeS} for detail. Thus $G$ is not of type ${\rm E}_8$.

If $G$ is of type ${\rm F}_4$ then $\g$ (and hence $M$) acts faithfully on a $26$-dimensional irreducible 
$G$-module $V$ of highest weight $\varpi_4$. It is well known that the restricted Lie algebra $\g$ embeds into  a restricted Lie algebra $\widetilde{\g}$ of type ${\rm E}_6$ in such a way that $\widetilde{\g}=\g\oplus V$ as 
$\g$-modules and $\mathcal{O}_{\rm min}(\g)\subset\mathcal{O}_{\rm min}(\widetilde{\g})$. Let $e\in\OO_{\rm min}(\g)$ and denote by $\rho$ the representation of $\g$ in $\mathfrak{gl}(V)$. As
$\dim\, [e,\g]= \dim\OO_{\rm min}(\g)=16$ 
and $\dim\, [e,\widetilde{\g}]=\dim \OO_{\rm min}(\widetilde{\g})=22$ the above shows that $$\dim {\rm Im}\,\rho(e)=22-16=6.$$ Since $V$ is a restricted 
$M$-module, the 
subspace $V_{(0)}:=\{v\in V\,|\,\,\rho(M_{(1)})\cdot v=0\}$ is nonzero.
For $i>0$ we put 
$V_{(i)}:=\big(\rho(M_{(-1)}\big)^i\cdot V_{(0)}.$ As $V$ is an irreducible $\g$-module and $M_{(-1)}$ generates the Lie algebra $\g$ we obtain a finite filtration
$V=V_{(-r)}\supset\cdots\supset V_{(-1)}\supset V_{(0)}$ of $V$ with the property that $\rho(M_{(i)})\cdot V_{(j)}\subseteq V_{(i+j)}$ for all $i,j\in\Z$.
 Let ${\rm gr}(V)=\bigoplus_{i\ge -r}\, {\rm gr}_i (V)$ be the corresponding graded $\mathcal{G}$-module where, as usual,
${\rm gr}_i(V)=V_{(i)}/V_{(i+1)}$ and $V_{(1)}=0$ by convention. Since the restricted Lie algebra $M=M_{(0)}$ contains a nonzero toral element $\tilde{h}_0$ of $\g$, the $\mathcal{G}$-module ${\rm gr}(V)$ has at least one  non-trivial composition factor, say $W$. 

By construction, all elements of $\mathcal{G}_-$ acts nilpotently on ${\rm gr}(V)$. Therefore, the ideal $N(\mathcal{G})$ of $\mathcal{G}$ acts trivially on
$W$. Hence $W$ is a non-trivial irreducible
$\bar{\mathcal G}$-module. Since we can choose $\tilde{h}_0$ in such a way that $h_0:={\rm gr}_0(\tilde{h}_0)$ is a nonzero toral element of 
$S_0\subseteq \bar{\mathcal G}_0$, the $S$-module $W$ has a non-trivial composition factor, too; we call it $\overline{W}$. Let $\bar{\rho}$ denote the representation of $S$ in $\mathfrak{gl}(\overline{W})$. Recall that we can find $e\in \OO_{\rm min}\cap M_{(p-2)}$ whose image 
$\bar{e}$ in $\bar{G}_{p-2}$ coincides with $D_H(x_1^{p-1})$.
Our remarks earlier in this part now imply that
$\dim\,{\rm Im}\,(\bar{\rho}(\bar{e})\ge 2(p-1)$. 
But then $$6=\dim\, {\rm Im}\,\rho(e)\ge \dim\,{\rm Im}\,\bar{\rho}(\bar{e})\ge 2(p-1).$$ As $p\ge 5$ 
%If $G$ is of type ${\rm F}_4$ then $p\in \{5,7\}$ by 
%parts~(vi) and (vii) of Proposition~\ref{balanced}. 
%Moreover, if $p=5$ then $s=2$ and hence
%$2(p-1)s=16>$
%If $p=7$ then $[M_{(3)},M_{(3)}]\subseteq %M_{(6)}=M_{(p-1)}=0$. Since $$\dim M_{(3)}\,=\, %\textstyle{\sum} _{i=3}^5\, \G_i\ge \dim\textstyle{\sum} %_{i=3}^5\, S_i\,=\,2p+(p-1)=20.$$
%Arguing as before we now deduce that a root system of %type ${\rm F}_4$ contains an abelian subset of size $20$. 
%Since such subsets  cannot have size bigger than $9$ by $\cite{Mal} and \cite{PeS}, 
we reach a contradiction which shows that the case where $(a_1,a_2)=(1,0)$ is impossible.
\subsection{}\label{3.22} Finally, suppose  $(a_1,a_2)=(1,1)$, i.e. the grading of the Cartan type Lie algebra $S = H(2;\underline{1})^{(2)}$ is standard. Then $S_0\,=\,\Bbbk D_H(x_1^2)\oplus\Bbbk D_H(x_1x_2)\oplus \Bbbk D_H(x_2^2)$ is isomorphic to $\sl_2$ and $\bar{G}_{-1}=S_{-1}=\Bbbk D_H(x_1)\oplus\Bbbk D_H(x_2)$ is a $2$-dimensional irreducible $S_0$-module. Furthermore, $\bar{G}_{k}=0$ for $k\le -2$ and  $\G_1=S_1=\Bbbk D_H(x_1^3)\oplus\Bbbk D_H(x_1^2x_2)\oplus \Bbbk D_H(x_1x_2^2)\oplus\Bbbk D_H(x_2^3)$ is an irreducible $4$-dimensional $S_0$-module.

As before, we choose a toral element $\tilde{h}_0\in M_{(0)}$ which maps onto $D_H(x_1x_2)$ under the epimorphism $M_{(0)}\twoheadrightarrow \G_0$. Since the endomorphism $\ad \tilde{h}_0$ is semisimple, there exist $v_{-1,\pm 1}\in M_{(-1)}\cap \g(\tilde{h}_0,\pm 1)$  and $v_{0,\pm 2}\in M_{(0)}\cap \g(\tilde{h},\pm 2)$ such that $${\rm gr}_{-1}(v_{-1,-1})=D_H(x_1),\ \, {\rm gr}_{-1}(v_{-1,1})=D_H(x_2),\ \,{\rm gr}_{0}(v_{0,-2})=D_H(x_1^2),\ \,{\rm gr}_{0}(v_{0,2})=D_H(x_2^2).$$ Besides, there exist $v_{1,\pm 3}\in M_{(1)}\cap \g(\tilde{h}, \pm 3)$ and  $v_{1,\pm 1}\in M_{(1)}\cap \g(\tilde{h}, \pm 1)$ such that
$${\rm gr}_{1}(v_{1,-3})=D_H(x_1^3),\ \, {\rm gr}_{1}(v_{1,-1})=D_H(x_1x_2^2),\ \,{\rm gr}_{1}(v_{1,1})=D_H(x_1^2x_2),\ \,{\rm gr}_{1}(v_{1,3})=D_H(x_2^3).$$ Since $p>3$ we have that
$\G_2=S_2=[S_1,S_1]$. Our earlier remarks show that
\begin{equation}\label{H}
[v_{0,-2},v_{1,3}]\equiv b\, v_{1,1}\mod M_{(2)}\quad \mbox{for some }\, b\in\Bbbk^\times.
\end{equation}
To unify notation we put $\tilde{h}_0=:v_{0,0}$.
If $N(\G)=0$ then $\g=M_{(-1)}$ contains
$\g_{v_{0,0}}$. Since
the latter contains a maximal torus of $\g$ the subalgebra $M$ is regular in $\g$. So from now on we may assume that  $N_2(\G)=[\G_{-1},\G_{-1}]\ne 0$. In this case $[\G_{-1},\G_{-1}]\cong\wedge^2\,\bar{\G}_{-1}$ is
a $1$-dimensional $S_0$-module and
$M_{(-2)}=
[M_{(-1)},M_{(-1)}]+M_{(-1)}=\Bbbk w\oplus M_{(-1)}$ where $w=[v_{-1,-1}, v_{-1,1}]\ne 0$.

We claim that $w\in \n_\g(M_{(1)})$. Since $w\in M_{(-2)}$ and $M_{(2)}=[M_{(1)},M_{(1)}]+M_{(3)}$ we
just need to show that $[w,v_{i,1}]\in M_{(1)}$ for   $i\in\{\pm 1,\pm3\}$.
Since $w\in M_{(-2)}\cap \g_{v_{0,0}}$ and $$M_{(-2)}\subseteq \g_{v_{0,0}}+\g(v_{0,0},1)+\g(v_{0,0},-1)+M_{(0)}$$ we have
$[w,v_{0,\pm 2}]\in M_{(0)}$. As $[w,v_{1,3}]\in
\g(v_{0,0}, 3)\cap M_{(-1)}$  and $M_{(-1)}\subseteq \g(v_{0,0},1)+\g(v_{0,0},-1)+M_{(0)}$ it must be that $[w,v_{1,3}]\in M_{(0)}$. Hence
$[v_{0,-2},[w,v_{1,3}]]\in M_{(0)}$. But then (\ref{H}) yields
\begin{eqnarray*}
M_{(0)}&\ni& [v_{0,-2},[w,v_{1,3}]]=[[v_{0,-2},w], v_{1,3}]+[w,[v_{0,-2},v_{1,3}]]\\
&\in& [M_{(0)},M_{(1)}]+b[w,v_{1,1}]+[w,M_{(2)}]
\subseteq\, b[w,v_{1,1}]+M_{(0)}.
\end{eqnarray*}
As $b\ne 0$ and $M_{(0)}\cap \g(v_{0,0},1)\subseteq M_{(1)}$ this yields $[w,v_{1,1}]\in M_{(1)}$.
So $[w,[v_{0,\pm 2},v_{1,1}]]\in
[M_{(0)},v_{1,1}]+[v_{0,\pm 2},M_{(1)}]\subseteq M_{(1)}$ forcing $[w,v_{1,3}],\,[w,v_{1,1}]\in M_{(1)}$. Finally, $[w,[v_{0,-2}, v_{1,-1}]]\in
[M_{(0)},v_{1,-1}]+[v_{0,-2},M_{(1)}]\subseteq M_{(1)}$. Hence $[w,v_{1,-3}]\in M_{(1)}$  proving the claim.
As a result, $w\in \n_\g(M_{(1)})$ contrary to the maximality of $M$. We now conclude that the case where $S=H(2;\underline{1})^{(2)}$ is impossible.
\subsection{}\label{3.23} Suppose that $S\cong H(2;\underline{1};\Phi)^{(1)}$ where $\Phi$  is one of $\Phi(\tau)$, $\Phi(1)$, $\Phi(2)$.
Then ${\rm Der}(S)=S_p$; see \cite[Theorem~7.2.2]{Str04}. The grading of $S$ gives rise to that of ${\rm  Der}(S)$. Identifying $S$ with $\ad S$ we have that
$S_i^{[p]}\subseteq  {\rm Der}_{pi}(S)$ for all $i\in \Z$. Then it follows from Jacobson's formula for $p$-th powers that
\begin{equation}\label{p-th}
{\rm Der}_0(S)\,=\,\big(\textstyle{\sum}_{i\in\Z,\,j\ge 0}\,S_i^{[p]^j}\big)_0\,=\,
\textstyle{\sum}_{i\ge 0}\,S_0^{[p]^i}.
\end{equation}
As $\G_0\cong\bar{\G}_0\subseteq {\rm Der}_0(S)$
this implies that $S_{-1}=\G_{-1}$ is an irreducible $S_0$-module.

If $\Phi=\Phi(\tau)$ then $\omega=\Phi(\omega_S)=(1+x_1^{p-1}x_2^{p-1}){\rm d}x_1\wedge{\rm d}x_2$ and $S$ is contained in the $\Bbbk$-span of all $D_{H,\omega}(f)$ with $f\in O(2;\underline{1})$; see \cite[Theorem~6.5.7(2)]{Str04}. Thanks to
(the proof of)
Lemma~\ref{non-grad} we may assume  that the grading of $S$ is induced by the action of the torus
$T_\Phi=\{g(t,t^{-1})\,|\,\,t\in \Bbbk^\times\}$. Then $S_0$ is contained in the $\Bbbk$-span of all $D_{H,\omega}(x_1^kx_2^k)$ with
$k\in\{1,\ldots, p-1\}$. Using \cite[(6.5.5)]{Str04} it is then straightforward to check that $S_0$ is abelian. The irreducibility of the $S_0$-module $S_{-1}$ now gives $\dim \G_{-1}=\dim S_{-1}=1$.
But then $\G_{-1}$ is not a faithful $\G_0$-module.
This contradiction shows that $S\not\cong H(2;\underline{1};\Phi(\tau))^{(1)}$.

Suppose $\Phi=\Phi(l)$. Since in the present case $\underline{n}=\underline{1}$ we may assume that $l=1$.
Then $\omega=\Phi(\omega_S)=\exp(x_1^{(p)}){\rm d}x_1\wedge{\rm d}x_2$. Using (\ref{p-th}) we observe that $\G_0\cong \bar{\G}_0$ identifies with the $p$-closure of $S_0$ in ${\rm Der}_0(S)$. By the proof of Lemma~\ref{non-grad}, we may assume that the grading of $S$ is induced by the action of the torus $T_\Phi=\{g(1,t)\,|\,\,t\in\Bbbk^\times\}$. In this case, $S_0$ contains $D_{H,\omega}(x_2)=-\partial_1-x_1^{p-1}x_2\partial_2$. Since $\bar{\G}_0$ is a restricted subalgebra of ${\rm Der}(S)$ we then have $x_2\del_2=D_{H,\omega}(x_2)^{[p]}\in \bar{\G}_0$; see \cite[p.~45]{Str09}. This means that $\G_0\cong \bar{\G}_0$ contains the degree derivation of $\G$.
Since $M_{0}$ is a restricted subalgebra of $\g$ we can find a toral element $t_0\in M_{(0)}$ which maps onto
the degree derivation $x_2\partial_2$ under the canonical homomorphism $M_{(0)}\twoheadrightarrow
\bar{\G}_0$.

From \cite[10.4]{Str09} we know that $S_p=S\oplus \Bbbk(x_2\del_2)$ and $\bar{\G}_i=S_i$ for all $i\ne 0$.
Since $S_{-1}\ne 0$ there exists $\varepsilon\in\{\pm 1\}$ such that $S_i=\{x\in S\,|\,\,(g(1,t))
\cdot x=t^{\varepsilon i}x\}$ for all $i\in\Z$.
As $S$ is spanned by all $D_{H,\omega}(f)$ with $f\in O(1;\underline{1})$ by \cite[Theorem~6.5.7(2)]{Str04}, this shows that $\bar{\G}_i=S_i=0$ for all $i\le -p$.
If $\varepsilon=-1$ then $S_i=0$ for $i>1$. So we can
repeat verbatim the argument used in the last two paragraphs of
(\ref{3.8}) to obtain that $N(\G)=0$.
If $\varepsilon=1$ then $S_i=0$ for $i>1$ and we reach
the same conclusion by arguing as in (\ref{3.13}). In any event,
$\g_{t_0}\subset M$ which implies that $M$ is a regular subalgebra of $\g$. Since this contradicts our assumption on $M$ we deduce that the case $S\cong H(2;\underline{1};\Phi)^{(1)}$ is impossible.

\subsection{}\label{3.24} Suppose $S\cong H(2;(2,1);\Phi)^{(2)}$. Then $\dim S\ge p^3-2$ which implies that $G$ is of type ${\rm E}_7$ and $p=5$.
Therefore, $$\dim N(\G)=\dim \g-\dim \bar{\G}\le \dim\g-\dim S\le 133-123=10.$$
From this it is immediate that $S=S^{(\infty)}$ acts trivially on all $\bar{\G}$-modules $N(\G)^k/N(\G)^{k+1}$ with $k\ge 1$.
Let $M'$ be the inverse image of $S_0$ under the canonical homomorphism $M=M_{(0)}\twoheadrightarrow \bar{\G}_0$.

If $\dim M'>(\dim\g)/2$ then $\kappa_{\vert M'}\ne 0$. Applying
(\ref{kappa}) then yields that the restriction of  $\kappa_{\G}$ to $S_0$ is nonzero.
Since $S_0$ acts trivially on each $N(\G)^k/N(\G)^{k+1}$, it follows that the restriction of $\kappa_{\bar{\G}}$ to $S_0$ is nonzero as well.
Since $S$ is an ideal of $\G$, the restriction of $\kappa_{\bar{\G}}$ to $S$ coincides with the Killing form of $S$. In view of the above this means that $\kappa_S\ne 0$. But then $\kappa_S$  is non-degenerate by the simplicity of $S$. Since $S$ is strongly degenerate, i.e. contains a nonzero element $c$ with $(\ad c)^2=0$, this is impossible; see \cite[Lemma~4.4]{PS3}, for example.
It follows that
\begin{equation}\label{1/2}\dim S_{\ge 0}\,\le\, \dim M'\le (\dim \g)/2.\end{equation}
If $S=H(2;(2,1))^{(2)}$ we may assume without loss of generality that the grading of $S$ has type $(a_1,a_2)$ with respect to the standard generators $x_1,x_2$ of $O(2;(2,1))$; see \cite[Theorem~4.7]{PS3}.
Since $S_{-1}$ is an irreducible
${\rm Der}_0(S)$-module, the proof of \cite[Lemma~4.18]{PS3} shows that $(a_1,a_2)$ is one of
$(-1,-1)$, $(-1,0)$, $(0,-1)$, $(0,1)$, $(1,0)$, $(1,1)$. The description in {\it loc.\,cit.} shows that in the last three cases $S_{\ge 0}$ has codimension $< p^2=25$ in $S$, implying that $\dim S_{\ge 0}\ge 123-24=94>(\dim \g)/2$. In view of (\ref{1/2}) it follows that $(a_1,a_2)\in \big\{(-1,-1), (-1,0), (0,-1)\big\}$. Repeating almost verbatim the arguments used (\ref{3.18}) and (\ref{3.19}) we find, in each of the remaining cases, an element $x\in M_{(1)}\cap \OO_{\rm min}$ such that $(\ad {\rm gr}_1(x))^4\ne 0$. Since $(\ad x)^3=0$ for every $x\in\OO_{\rm min}$, we reach a contradiction. This shows that the case where
$S\cong H(2;(2,1))^{(2)}$ cannot occur.

 If $S\cong H(2;(2,1);\Phi(\tau))^{(1)}$ we argue as in (\ref{3.23}). Indeed, in this case
 ${\rm Der}(S)=S_p$ by \cite[Theorem~7.2.2(3)]{Str04} and
 $\omega=(1+x_1^{(p^2-1)}x_2^{(p-1)}){\rm d}x_1\wedge{\rm d}x_2$.
 It follows from (\ref{p-th}) that $S_{-1}$ is an irreducible $S_0$-module.
 By the proof of
Lemma~\ref{non-grad}, we may assume  that the grading of $S$ is induced by the action of the torus
$T_\Phi=\{g(t,t^{-p-1})\,|\,\,t\in \Bbbk^\times\}$ whilst \cite[Theorem~6.5.7(2)]{Str04} yields that
  $S$ is contained in the $\Bbbk$-span of all $D_{H,\omega}(f)$ with $f\in O(2;(2,1))$. Then  $S_0$ is contained in the $\Bbbk$-span of all $D_{H,\omega}\big(x_1^{(kp+k-p)}x_2^{(k)}\big)$ with
$k\in\{1,\ldots, p-1\}$. As a consequence, the nilradical of $S_0$ has codimension $1$ in $S_0$. As $S_{-1}$ is an irreducible $S_0$-module $S_{-1}$ this implies that $\dim \G_{-1}=\dim S_{-1}=1$.
But then $\G_{-1}$ is not faithful over $\G_0$, a contradiction.
Therefore, $S\not\cong H(2;(2,1);\Phi(\tau))^{(1)}$.

If $S\cong H(2;(2,1);\Phi(1))$ then $\omega=\exp\big(x_1^{(p^2)}\big){\rm d}x_1\wedge{\rm d}x_2$ and we may assume that the grading of $S$ is induced by the action of $T_\Phi=\{g(1,t)\,|\,\,t\in\Bbbk^\times\}$ (see the proof of Lemma~\ref{non-grad}).
Since $S_{-1}\ne 0$ there exists $\varepsilon\in\{\pm 1\}$ such that $S_i=\{x\in S\,|\,\,(g(1,t))
\cdot x=t^{\varepsilon i}x\}$ for all $i\in\Z$.
From \cite[Theorem~7.2.(4)]{Str04} we know that
${\rm Der}(S)=S_p$ whilst \cite[Theorem~7.1.3(2)]{Str04} implies that ${\rm Der}(S)$ is spanned by
$S\cong \ad S$ and the elements $x_2\partial_2$ and $\big(\partial_1-x_1^{(p^2-1)}x_2\partial_2\big)^p$ both of which have degree $0$. By \cite[Theorem~6.5.8]{Str04}, $S$ is spanned by the homogeneous elements
$$D_{H,1}(f)=D_{\omega}\big(\exp\big(x_1^{(p^2)}\big) \cdot f\big),\qquad\  f\in O(2;(2,1)).$$
This implies that if $\varepsilon=1$ then the graded subalgebra $S_{\ge 0}$
of ${\rm Der}(S)$ has codimension $p^2=25$ in $S$. As $\dim S_{\ge 0}=100>(\dim\g)/2$,
this violates (\ref{1/2}). So it must be that $\varepsilon=-1$.
But then the above description shows that $S_0\cong W(1;2)$ as Lie algebras, $\G_i=S_i=0$ for
$i>1$, and
$\G_{1}=S_1={\rm span}\,\big\{D_{H,1}(x_1^{(i)})\,|\,\,
0\le i\le p^2-1\big\}$ is an irreducible (non-restrictable) $S_0$-module of dimension $p^2$.
Using \cite[(6.5.8)]{Str04}
is it straightforward to check that
$D_{H,1}(x_1^{(i)})=x_1^{(i-1)}\partial_2$ for $1\le i\le p^2-1$ and $D_{H,1}(1)=x_1^{(p^2-1)}\del_2$.
Moreover, the element $u:=\partial_1-x_1^{(p^2-1)}x_2\partial_2\in S_0$ operates on $S_1$
as a single Jordan block of size $p^2$ with eigenvalue
$1$.
Put $v:=\sum_{i=0}^{p^2-1}\,x_1^{(i)}\partial_2$. Then
$v\in S_1$ and direct computations show that
$[u,v]=v$ and $(\ad v )^4\ne 0$. Since $\G_i=0$ for $i>1$, the ideal ${\rm nil}(M)=M_{(1)}$ is abelian. Since $\G_0$ acts faithfully on $S_1$, it coincides with $\c_\g ({\rm nil}(M))$. Let
$\pi\colon\,M_{(0)}\to \G_0$ be the canonical homomorphism, and pick any
$\tilde{u}\in \pi^{-1}(u)$. Let $R$ be the Lie subalgebra of $M$ generated by
$\tilde{u}$ and $v$ (here we identify $M_{(1)}$ with $S_1$). Since $[\tilde{u},v]=v$ and $v\in {\rm nil}(M)$, the above discussion in conjunction with Corollary~\ref{max-root} show that $v\in\OO_{\rm min}$.
But then $(\ad v)^3=0$. As this contradicts our choice of $v$ we conclude that $S\not\cong H(2;(2,1);\Phi(1))$.

The case $S\cong H(2;(2,1);\Phi(2))$ is quite similar, but shorter.
Here $\omega=\exp\big(x_2^{(p)}\big){\rm d}x_1\wedge{\rm d}x_2$ and the grading of $S$ is induced by the action of $T_\Phi=\{g(t,1)\,|\,\,t\in\Bbbk^\times\}$.
Arguing as before we observe that $\varepsilon=\pm 1$.
 If $\varepsilon=1$ then $S=\bigoplus_{i\ge -1}\,S_i$ and
$S_{\ge 0}$ has codimension $p=5$ in $S$. But then $\dim S_{\ge 0}=120>(\dim \g)/2$ violating (\ref{1/2}).
If $\varepsilon=-1$ then $S_i=0$ for $i>1$ and
$$S_{-1}={\rm span}\,\big\{D_\omega\big(\exp\big(x_2^{(p)}\big)\cdot x_1^{(2)}x_2^{(i)}\big)\,|\,\,0\le i\le p-1\big\}.$$
Using \cite[(6.5.8)]{Str04} is is straightforward to check that $S_{-1}\subset W(2;\underline{1})$. Since
the Lie algebra $S_{<0}$ is generated by $S_{-1}$,
it follows that $S_{1-p}\subset W(2;\underline{1})$
(here we regard both $S$ and $W(2;\underline{1})$ as Lie subalgebras of $W(2;(2,1))$).
However, $D_\omega\big(\exp\big(x_2^{(p)}\big)\cdot x_1^{(p)}\big)\in S_{1-p}\setminus W(2;\underline{1})$.
This contradiction finally shows that $S$ is not a Lie algebra of type $H$.
\subsection{}\label{3.25} If $S= K(3;\underline{1})$ then
$S\subset W(3;\underline{1})$  has basis $\big\{D_K(x_1^{a_1}x_2^{a_2}x_3^{a_3})\,|\,\,0\le a_i\le p-1\big\}$. In particular, $\dim S=p^3$. Hence this case may occur only if $p=5$ and $G$ is a group of type ${\rm E}_7$.
Recall that if $f\in O(3;\underline{1})$ then
$D_K(f)=f_1\partial_1+f_2\partial_2+f_3\partial_3$ where
$f_1=x_1\partial_3(f)-\partial_2(f)$, $f_2=x_2\partial_3(f)+\partial_1(f)$ and $f_3=2f-x_1\partial_1(f)-x_2\partial_2(f)$; see \cite[2.11]{BGP}, for example.

Repeating verbatim the arguments used at the beginning of (\ref{3.24}) we observe that
$\dim S_{\ge 0}\le (\dim \g)/2$.
As all derivations of $S$ are inner by
\cite[7.1.2(4)]{Str04}, the $S_0$-module $S_{-1}$ is faithful and irreducible. The Lie algebra $S$ has basis
$\big\{D_K(f)\,|\,\,f\in O(3;\underline{1})\big\}$ and any $\Z$-grading of $S$ is induced by an admissible grading of $O(3;\underline{1})$; see \cite[p.~276]{PS3}. Hence it may be assumed without loss of generality that there exists a triple $(a_1,a_2,a_3)\in\Z^3$ with $a_3=a_1+a_2$ such that
\begin{equation}\label{typeK}
\deg\big(D_K(x_1^{r_1}x_2^{r_2}x_3^{r_3})\big)=
(r_1+r_3-1)a_1+(r_2+r_3-1)a_2.\end{equation}
We first suppose that $a_2=0$. Since $S_{-1}\ne 0$
it follows from (\ref{typeK}) that $a_1=\pm 1$ and $S_0$ is spanned by $D_K(x_1x_2^i)$ and $D_K(x_2^ix_3)$ with $0\le i\le p-1$. Also, $$\textstyle{\sum}_{i<0}\, S_{ia_1}\,=\,S_{-a_1}\,=\,
{\rm span}\,\big\{D_K(x_2^i)\,|\,\,0\le i\le p-1\}.$$
If $a_1=1$ then $\dim S_{\ge 0}=120>(\dim \g)/2$. So this case cannot occur. Hence $a_1=-1$ implying $S_{\ge 0}=S_0\oplus S_1$. Then the proof of \cite[Lemma~4.12]{PS3} shows that
$S_0\,\cong\, W(1;\underline{1})\ltimes O(1;\underline{1})$
acts irreducibly and faithfully on $S_1\cong O(1;\underline{1})$. Set $u:=D_K(x_1)$ and $v:=D_K(x_2)$. It is straightforward to see that
$u\in S_0$ and $v$ spans $S_{-1}\cap \ker (\ad u)$.
Identify $S_1$ with $M_{(1)}={\rm nil}(M)$ and let $\tilde{u}$ be a preimage of $u$ in $M=M_{(0)}$.
The preceding remarks show that $\c_\g({\rm nil}(M))=M_{(1)}$. Applying Corollary~\ref{max-root}
with the abelian Lie algebra
$R=\Bbbk \tilde{u}\oplus\Bbbk v$ we now conclude that $v\in \OO_{\rm min}$. But $(\ad D_K(x_2))^4\ne 0$ and $(\ad e)^3=0$ for all $e\in\OO_{\rm min}$. This contradiction shows that $a_1 \ne 0$. Arguing similarly (with the roles of $x_1$ and $x_2$ interchanged) one obtains that $a_2\ne 0$.

If $a_1\ne |a_2|$ then none of $D_K(1)$, $D_K(x_1)$, $D_K(x_2)$, $D_K(x_1^2)$, $D_K(x_2^2)$ have degree $0$. 
As $D_K(x_3)$ and $D_K(x_1x_2)$ belong to the standard maximal subalgebra of $S$ it follows that $[S_0, S_0]$ acts nilpotently on $S$. The irreducibility of the $\G_0$-module $\G_{-1}=S_{-1}$ then forces $\dim \G_{-1}=1$ yielding $\G_k=0$\ for $k\ge 2$. But then $\dim \g=\dim S=125$, a contradiction. Therefore, $a_1=|a_2|$.

If $a_1=-a_2$ then arguing as in part~(d) of the
proof of \cite[Lemma~4.12]{PS3} one observes that
$S_0$ contains a nonzero ideal acting nilpotently on $S$. As this contradicts the faithfulness of the $S_0$-module $S_{-1}$, we conclude that $a_1=a_2$.
Then $a_1=\pm 1$ because $S_{-1}\ne 0$. If $a_1=1$ then
$\dim S_{\ge 0}=122>(\dim \g)/2$. As this is not the case we have that $a_1=a_2=-1$. Then $\G_2=S_2=\Bbbk D_K(1)$ and $\G_k=S_k=0$ for $k\ge 2$. It follows that $M$ normalises a line $\Bbbk e$ with $e\in\N(\g)$.
As $\n_\g(\Bbbk e)$ is contained in the optimal parabolic subalgebra of $e$ by \cite[Theorem~A]{P03}, we now deduce that the case where $S\cong K(3;\underline{1})$ is impossible.
\subsection{}\label{3.26} Finally, suppose that $S$ is isomorphic to the Melikian algebra $\mathcal{M}(1,1)$.
Then $p=5$ and $\dim S=125$ implying that the group $G$ has type ${\rm E}_7$. By \cite[7.1.4]{Str04}, all derivations of $S$ are inner. As $\dim \G=125<133=\dim\g$ it must be that $N(\G)\ne 0$.
Arguing as at the beginning of (\ref{3.24}) we observe that $\G=S=S^{(\infty)}$ acts trivially on the nonzero $\G$-module $N(\G)/N(\G)^2$.
As a consequence, $N(\G)$ contains  a graded ideal $I$ of $\G$ such that $N(\G)^2\subset I$ and $\dim(N(\G)/I)=1$.
Let $L=\G/I$. This Lie algebra is a central extension of $\mathcal{M}(1,1)$ and its centre $\z(L)=N(\G)/I$ is contained in $\G_{-k}/I_{-k}$ for some $k\ge 2$.
Since $\G_{-k}=\G_{-1}^k$ for all $k\ge 1$ it must be that $\z(L)\subset [L,L]$, i.e. the extension $$0\to\z(L)\to L\to \mathcal{M}(1,1)\to 0$$ is non-split. Since this contradicts \cite[Proposition~6.2]{PS6}
we now conclude that the present case case cannot occur.
This completes the proof of Theorem~\ref{thm:main}.
\section{Further remarks and observations}
\subsection{Non-existence of Hamiltonian subalgebras of $\g$}
We retain our assumption that $G$ is an exceptional algebraic $\Bbbk$-group and $p$ is a good prime for $G$.
It is proved in \cite{HS2} that $\g$ does not contain Lie subalgebras
$M$ isomorphic to $H(m;\underline{n};\Phi)^{(2)}$. Of course, the majority of pairs $(m;\underline{n})$ are ruled out by simple dimension arguments, but the most difficult case where $M\cong H(2;\underline{1};\Phi)^{(2)}$ is addressed in {\it loc.\,cit.} by using a description of the Witt subalgebras of $\g$.
Since this result is important for classifying all maximal subalgebras of $\g$, an alternative proof
is given below.
\begin{prop}[Herpel--Stewart]
The Lie algebra $\g$ does not contain Lie subalgebras isomorphic to $H(2;\underline{1};\Phi)^{(2)}$.
\end{prop}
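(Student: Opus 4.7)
Suppose, for contradiction, that $\g$ contains a subalgebra $M$ isomorphic to $H(2;\underline{1})^{(2)}$, $H(2;\underline{1};\Phi(\tau))^{(1)}$ or $H(2;\underline{1};\Phi(l))$ for some $l\in\{1,2\}$ (the three variants embraced by the notation in (\ref{3.16})). The plan is to mimic the strategy developed in Section~3: exhibit a nonzero $p$-balanced toral element of $\g$ arising from $M$, apply Proposition~\ref{balanced} to restrict $(G,p)$, and then rule out each of the seven resulting cases.

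To produce the toral element, in the graded case $M=H(2;\underline{1})^{(2)}$ take $t:=D_H(x_1x_2)=x_2\partial_2-x_1\partial_1$, which is toral in $W(2;\underline{1})\supset M$ and has the property that $\ad_M t$ has $0$-weight space of dimension $p-2$ and every nonzero weight space of dimension exactly $p$. In the twisted cases, I would use the $1$-dimensional torus $T_\Phi\subset\Aut(M)$ furnished by Lemma~\ref{non-grad}, together with the explicit realisations described in (\ref{3.23}), to extract a toral element of the minimal $p$-envelope $M_p$ with the same qualitative weight pattern. Setting $\tilde t:=t^{[p]^N_\g}\in\g$ for $N\gg 0$ one gets $\tilde t\ne 0$ with $\ad_\g\tilde t|_M=\ad_M t$, and $\tilde t$ is toral in $\g$.

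Next I would show $\tilde t$ is $p$-balanced in $\g$. By Lemma~\ref{graded}, every composition factor of the restricted $M_p$-module $\g$ is a restricted simple $M_p$-module; these come in three types (trivial, the adjoint $M$, or an induced module $u(M)\otimes_{u(\mathcal{M}(M))}V_0$ of dimension $p^2\dim V_0$), as recalled in (\ref{3.17}). A PBW calculation, exploiting that the two coset representatives of $M/\mathcal{M}(M)$ have $t$-weights $\pm 1$, shows that on any induced factor every $t$-weight space has dimension exactly $p\dim V_0$. Summing over composition factors, all nonzero weight spaces of $\tilde t$ on $\g$ are equal and divisible by $p$, so $\tilde t$ is $p$-balanced. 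Proposition~\ref{balanced} then restricts $(G,p)$ to the seven listed pairs.

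Ruling out each pair is the main obstacle. My plan is to combine three pieces of information: the numerical constraint $\dim\g(\tilde t,i)=p(n_{\mathrm{adj}}+\sum_{V_0}n_{V_0}\dim V_0)$ for $i\ne 0$ with $n_{\mathrm{adj}}\ge 1$ (since $M$ itself sits in $\g$ as an $M$-submodule isomorphic to the adjoint module), which forces the composition multiplicities into a short list compatible with $\dim\g$; the prescribed structure of the Levi subalgebra $\g_{\tilde t}$ as dictated by Proposition~\ref{balanced}; and an argument patterned after (\ref{3.17}) establishing $\tilde t\in[\g_{\tilde t},\g_{\tilde t}]$, which is incompatible with $\tilde t\in\z(\g_{\tilde t})$ in good characteristic since $[\g_{\tilde t},\g_{\tilde t}]$ is semisimple with trivial centre. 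The hard part will be performing this last step without the $S\otimes O(m;\underline{n})$ scaffolding of (\ref{3.17}); one must instead realise $\tilde t$ as a $[p]^N$-power of commutators produced inside the $p$-envelope of the abelian subalgebra $\c_M(t)\subset M$ (spanned explicitly by $D_H(x_1^ax_2^a)$ for $1\le a\le p-2$), exploiting the Jordan--Chevalley decompositions in $\g$ of its non-toral elements such as $D_H(x_1^2x_2^2)$.
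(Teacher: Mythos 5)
Your opening moves do track the paper's: the proof there also manufactures a nonzero $p$-balanced toral element of $\g$ from the Hamiltonian subalgebra (using the classification of restricted irreducible modules over $S_p$ to see that every non-trivial composition factor contributes equal, $p$-divisible amounts to each nonzero eigenspace) and then feeds it into Proposition~\ref{balanced}. Two remarks already at this stage. First, you justify restrictability of the composition factors by citing Lemma~\ref{graded}, but that lemma concerns \emph{graded} modules over graded semisimple Lie algebras and there is no grading in sight here; the paper instead runs a minimal-counterexample reduction showing that the $p$-envelope of $S$ in $\g$ has trivial centre (using that $H(2;\underline{1};\Phi)^{(2)}$ admits no faithful module of dimension $<p^2-2$), which is what makes $\g$ a restricted $S_p$-module. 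Second, the paper deliberately takes a toral element \emph{outside} the standard maximal subalgebra (namely $h=-2(1+x_1)x_2$ in the Poisson realisation), not $D_H(x_1x_2)\in\mathcal{M}(S)$; your choice is adequate for checking balancedness but not for what comes next.

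The fatal gap is the endgame. Your plan is to show $\tilde t\in[\g_{\tilde t},\g_{\tilde t}]$ and contradict centrality of $\tilde t$ in the Levi subalgebra $\g_{\tilde t}$. That trick is exactly the one used in (\ref{3.17}), but there it depends essentially on the tensor factor $O(m;\underline{n})$ with $m\ge 1$: one exhibits $c$ and $h_0\otimes x_1$ in $\c(h_0\otimes 1)$ with $[c,h_0\otimes x_1]=h_0\otimes D(x_1)$, whose $p$-closure contains $h_0\otimes 1$. In the present situation $m=0$ and no such mechanism exists; your proposed substitute --- commutators inside the $p$-envelope of the \emph{abelian} subalgebra $\c_M(t)$ --- yields nothing, since an abelian restricted subalgebra has abelian $p$-envelope and zero derived subalgebra, and there is no a priori reason for $\tilde t$ to lie in $[\g_{\tilde t},\g_{\tilde t}]$ at all. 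This is precisely why the paper abandons that route for $m=0$ and rules out the seven cases of Proposition~\ref{balanced} by entirely different and much heavier means: Malcev's bound on abelian subsets of root systems applied to $S_{(p-1)}$ for cases (v)--(vi); the embedding of ${\rm F}_4$ into ${\rm E}_6$ and the $26$-dimensional module for case (vii); and, for the remaining cases, the identification via Kac coordinates of a nilpotent $v\in V=\Bbbk x_1\oplus\Bbbk(1+x_1)^2x_2$ lying in $\overline{\OO({\rm A}_{p-1})}$, Jordan-block counts for $\ad v$ against Lawther's tables, the commuting powers $v^k$ in the Poisson algebra, and (for the twisted types $\Phi(\tau)$, $\Phi(l)$) the observation that a suitable $2$-plane meets $\N_p(\g)$ trivially while $\g(h,2)\cap\N_p(\g)$ has codimension at most one. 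Essentially all of the proof's content lives in this stage, and your proposal does not reach it.
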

\begin{proof} (a)
Suppose $S\cong H(2;\underline{1};\Phi)^{(2)}$ is a Lie subalgebra of $\g$  and no exceptional group $H$ of dimension $<\dim\g$ has the property that $H(2;\underline{1};\Phi)^{(2)}\hookrightarrow \Lie(H)$.
Let $\mathcal{S}$ be the $p$-envelope of $S$ in $\g$. If $\z(\mathcal{S})$ contains a semisimple element, $t$ say, then $S$ is contained in the Levi subalgebra $\g_t=\Lie(L)$ where $L=G_t$.
If $\z(\mathcal{S})$ contains a nonzero nilpotent element, $e$ say, then $S\subset \g_e$. Since $\g_e$ is contained in a proper parabolic subalgebra of $\g$, the Lie algebra $S$ again injects into the Lie algebra of a proper Levi subgroup $L$ of $G$.
In any event, the Lie algebra of one of the simple components of $L$ must contain an isomorphic copy of $S$. As this component must be classical by our assumption on $\g$,
it is straightforward to see that
$S$ affords a faithful representation of
dimension $<23\le p^2-2$. However, our discussion in (\ref{3.20}) shows that this is impossible.
Therefore, $\z(\mathcal{S})=0$ that is $\mathcal{S}\cong S_p$, the minimal $p$-envelope of $S$; see \cite[Corollary~1.1.8(2)]{Str04}. In particular, this implies that all composition factors of
the $S$-module $\g$ are restrictable.

(b) Since all composition factors of the $\mathcal{S}$-module $\g$ are restricted,
the arguments used in (\ref{3.17}) show that $\mathcal{S}$ contains a nonzero $p$-balanced toral element, say $h$. Then the centraliser $\g_h$ is listed in one of the seven cases of
Proposition~\ref{balanced}.
Let $S=S_{(-1)}\supset S_{(0)}\supset S_{(1)}\supset\cdots\supset S_{(q)}$ be the standard filtration of the Cartan type Lie algebra $S$ and denote by $O(2;\underline{1})[d]$ the subspace of all homogeneous
truncated polynomials of degree $d$ in $O(2;\underline{1})$. It is well known that $q\in \{2p-5,2p-4\}$ and $\dim S_{(i)}/S_{(i+1)}\ge \dim O(2;\underline{1})[i+2]$ for all $i\ge -1$. From this it is immediate that $S_{(p-1)}$ is an abelian subalgebra of $\g$ and
$$\dim S_{(p-1)}\,\ge\, \textstyle{\sum}_{i\ge p+1}\,O(2;\underline{1})[i]\,=\,(p-2)+(p-3)+\cdots+2\,=\,\textstyle{\frac{1}{2}}(p-1)(p-2)-1.$$
If $p=11$ then  $\dim S_{(p-1)}\ge 44$ and if $p=7$ then $\dim S_{(p-1)}\ge 14$. Applying \cite{Mal} and \cite{PeS} 
and arguing as in (\ref{3.21}) we now observe that cases (v) and (vi) of Proposition~\ref{balanced} are impossible. 

(c) Suppose $S\cong H(2;\underline{1})^{(2)}$. Some dimension estimates used in (\ref{3.20}) and (\ref{3.21}) are still applicable since they only rely on the structure of $S$ and properties of
its irreducible restricted representations. Furthermore, keeping in mind that Lie algebras of type ${\rm F}_4$ 
embed into Lie algebras of type ${\rm E}_6$ and arguing as at the end of (\ref{3.21}) we see that case (vii) of Proposition~\ref{balanced} cannot occur in the present situation. 

We adopt the notation introduced in (\ref{3.20}). In particular, we let $E_1,\ldots, E_r$ stand for the {\it induced} composition factors of the $\ad_\g S$-module $\g$ and write $l$ for the multiplicity of the adjoint module $S$ in $\g$. As in (\ref{3.20}) we put
 $$s:=l+\textstyle{\sum}_{i=1}^r\,(\dim E_i)/p^2.$$
 Recall that $S$ is closely related with
$O(2;\underline{1})$ regarded as a Lie algebra through its standard Poisson bracket $\{\,\cdot\,,\,\cdot\,\}$. More precisely, $\Bbbk 1\subset \{O(2;\underline{1}),O(2;\underline{1})\}$ is the centre of the Poisson algebra $O(2;\underline{1})$ and $S\cong\{O(2;\underline{1}),O(2;\underline{1})\}/\Bbbk 1$ as Lie algebras. The derived subalgebra
$\{O(2;\underline{1}),O(2;\underline{1})\}$ is spanned by all monomials $x_1^{m_1}x_2^{m_2}$ with $0\le m_1,m_2\le p-1$ and $m_1+m_2<2(p-1)$.
In other words, we may identify $S$ with the the $\Bbbk$-span of all $x_1^{m_1}x_2^{m_2}\in O(2;\underline{1})$, with $0<m_1+m_2<2(p-1)$ in such a way that
\begin{equation}\label{pois}
[x_1^{m_1}x_2^{m_2},x_1^{n_1}x_2^{n_2}]\,=\,
(m_1n_2-m_2n_1)x_{1}^{m_1+n_1-1}x_2^{m_2+n_2-1}\qquad\, (0\le m_i,n_i\le p-1,\,i=1,2).
\end{equation} Let $h=-2(1+x_1)x_2$, a toral element of $S$ not contained in $S_{(0)}$. Using (\ref{pois}) it is easy to see that all nonzero eigenvalues of $\ad h$ have multiplicity $p$. If $\rho\colon\,S\to \gl(E)$ is a restricted representation of $S$ induced from $S_{(0)}$, so that $E= u(S)\otimes_{u(S_{(0)})}\,E_0$ for some irreducible restricted $S_{(0)}$-module $E_0$, then  {\it all} eigenvalues of $\rho(h)$
have multiplicity $p\dim E_0$ because $h\not\in S_{(0)}$ and $h^{[p]}=h$. From this it is immediate that 
$h$ is a $p$-balanced element of $\g$ and $\dim\g(h,i)=ps$ for all $i\in\F_p^\times$.

(d) Let $V$ be the $2$-dimensional subspace of $S$ spanned by $x_1$ and $(1+x_1)^2x_2$. By construction, $V\cap S_{(0)}=0$ and $[h,v]=2v$ for all $v\in V$.
Analysing Kac coordinates of $p$-balanced $G$-orbits obtained in the course of proving Proposition~\ref{balanced}
one finds out that in cases (i), (ii) and (iv) there always exist a nilpotent element $e\in \OO({\rm A}_{p-1})$ and an optimal cocharacter $\tau\colon\,\Bbbk^\times\to G$ for $e$ such that $e\in \g(\tau,2)$ and $h=({\rm d}\tau)(1)$. In order to see this one just needs to compare
(\ref{e6}), (\ref{e7}) and (\ref{e8-7-2}) with \cite[pp.~402--407]{Car}. Moreover, in all three case
$\g(h,2)=\g\big(({\rm d}\tau)(1),2\big)=\g(\tau,2)\oplus \Bbbk n$ for some $n\in\OO_{\rm min}$. As $\dim V=2$ it must be that
$V\cap \g(\tau,2)\ne 0$. In view of \cite[Theorem~2.3]{P03} this implies that a nonzero element $v\in V$ is contained in the Zariski closure of $\OO({\rm A}_{p-1})$ in $\g$. Note that $v=\lambda x_1+\mu (1+x_1)^2x_2=v=u+v_0$ where $v_0 = 2\mu x_1x_2+x_1x_2^2\in S_{(0)}$ and $u=\lambda x_1+\mu x_2$ is a {\it nonzero} element in the linear span of
$x_1=\partial_2$ and $x_2=-\partial_1$.

As $\OO({\rm A}_{p-1})\subset\N_p(\g)$ we have that $v^{[p]}=0$. Since $v\not\in S_{(0)}$ it is straightforward to see that for any irreducible restricted representation $\rho\colon\,S\to \gl(E)$ such
that $E= u(S)\otimes_{u(S_{(0)})}\,E_0$ as $S$-modules,
the endomorphism $\rho(v)$ has $p\dim E_0$ Jordan blocks of size $p$.
On the other hand, using (\ref{pois}) one checks directly that
the ${\rm Im}(\ad_S u)^{p-1}$ has dimension $p-1$. A standard filtration argument then shows that $\dim\,{\rm Im}\,(\ad_S v)^{p-1}\ge \dim\,{\rm Im}\,(\ad_S u)^{p-1}$. Since $(\ad v)^p=0$ and $\dim S=p^2-2$ it follows that $\ad_S v$ has $p-1$ Jordan blocks of size $p$.   Consequently,
$\ad v$ has at least $s(p-1)$ Jordan blocks of size $p$.

Regarding $v=\lambda x_1+\mu (1+x_1)^2x_2$ as an element of the Poisson algebra $O(2;\underline{1})$ we observe that $v^k \in \{O(2;\underline{1}),O(2;\underline{1})\}$ for all $1\le k\le p-1$. We identify the $v, v^2,\ldots, v^{p-1}$ with their images in $S\,\cong\,\{O(2;\underline{1}),O(2;\underline{1})\}/\Bbbk 1$. Since $\{v,v^k\}=0$ and
$[h,v]=2v$ we have that
$[v,v^k]=0$ and $[h,v^k]=2k v^k$ for all $1\le k\le p-1$.
This implies that
$\g(h,i)\cap \g_v\ne 0$ for all $i\in\F_p^\times$.

Note that $v^{p-1}, u^{p-1}\in S_{(p-3)}$ and $v^{p-1}-u^{p-1}\in S_{(p-2)}$. Since the automorphism group ${\rm Aut}(S)$
preserves all components $S_{(i)}$ of the standard filtration of $S$, it is straightforward to see that $u^{p-1}$ lies in the Zariski closure of $\Bbbk^\times\big({\rm Aut}(S)\cdot v^{p-1}\big)$.
In view of Chevalley's Semicontinuity Theorem, this means that $\dim {\rm Im}\,(\ad_S v^{p-1})\ge \dim{\rm Im}\,(\ad_S u^{p-1})$. Since for every irreducible restricted representation $\rho\colon\,S\to \gl(E)$ induced from $S_{(0)}$ the vector space $E$ carries an ${\rm Aut}(S)$-module structure compatible with that of $S$, we also have, by the same token, that $\dim {\rm Im}\,\rho(v^{p-1})\ge \dim{\rm Im}\,\rho(u^{p-1})$. Since $u^{p-1}$ and $x_1^{p-1}$ are conjugate under the action of ${\rm Aut}(S)$ the images of $\ad_S(u^{p-1})$ and  $\ad_S(x_1^{p-1})$ have equal dimensions. The same applies to the images of $\rho(u^{p-1})$ and  $\rho(x_1^{p-1})$ thanks to the compatible action of ${\rm Aut}(S)$ on $E$.

(e) If $h$ is as in case~(i) of Proposition~\ref{balanced} then $p=5$, the group $G$ has type ${\rm E}_6$ and $s=3$. So $\ad v$ has
at least $12$ Jordan blocks of size $p$.
If $h$ is as in case~(ii) of Proposition~\ref{balanced} then $p=5$, the group $G$ has type ${\rm E}_7$ and $s=5$. Therefore, $\ad v$ has at least $20$ Jordan blocks of size $p$. If $h$ is as in case~(iii) of Proposition~\ref{balanced} then $p=7$, the group $G$ has type ${\rm E}_8$ and $s=5$. Therefore, $\ad v$ has at least $30$ Jordan blocks of size $p$.
Since  $(\Ad\, G)\,v\subseteq \overline{\OO({\rm A}_{p-1})}$ (and hence $\dim\, [\g,v]\le \dim\OO({\rm A}_{p-1})$) we combine \cite[Tables~6, 8 and 9]{Law}
with \cite[Theorem~4.1]{PSt} to conclude that either $v\in\OO({\rm A}_{p-1})$ or $G$ is of type ${\rm E}_8$ and $v\in\OO({\rm E}_8({\rm a}_7))$.

Suppose $v\in\OO({\rm E}_8({\rm a}_7))$.
Then $p=7$ and $v$ is distinguished in $\g$. So it follows from \cite[11.8]{Borel} that all maximal toral subalgebras of the normaliser $\n_\g(\Bbbk v)=\Lie\big(N_G(\Bbbk v)\big)$ are $1$-dimensional and conjugate under the adjoint action of $G_v$. By \cite[p.~157]{LT11}, there is a nonzero toral element $t\in \n_\g(\Bbbk v)$ such that $\g(t,2p-2)\cap\g_v=0$.
As $[h,v]=2v$ the above implies that $\g(h,i)\cap\g_v=0$ for some $i\in\F_p^\times$, a contradiction.

Now suppose  $v\in\OO({\rm A}_{p-1})$. Then it is immediate from \cite[pp.~87, 104, 158]{LT11} that the subspace $$\g_v\cap\g(h,2p-2)=\g_v\cap \g\big(({\rm d}\tau)(1),2p-2\big)=\g_v\cap\g(\tau,2p-2)$$ is contained in the Zariski closure of $\OO_{\rm min}$. Our earlier remarks then show that $v^{p-1}\in\OO_{\rm min}$. In particular, $\dim {\rm Im}\,
(\ad v^{p-1})<2(p-1)s$ in all three cases.
On the other hand, arguing as in (\ref{3.20}) and taking into account the last paragraph of part~(d) one observes that
$\dim {\rm Im}\,\ad(v^{p-1})\ge  2(p-1)s$. This contradiction shows that cases~(i), (ii) and (iv) of Proposition~\ref{balanced} cannot occur.

If $h$ is as in case~(iii) of Proposition~\ref{balanced}, then it follows from (\ref{e8-7-1}) that $\g(h,i)=0$ for some $i\in\F_p^\times$. However, $0\ne v^k\in \g(h,2k)$ for all $k\in\{1,\ldots, p-1\}$. This proves that $S\not\cong H(2;\underline{1})^{(2)}$.

(f) Suppose $S\cong H(2;\underline{1}; \Phi(\tau))^{(1)}$. The rule
$\{x_1,x_2\}_\omega\,=\,(1+x_1)(1+x_2)$
extends uniquely to a Poisson bracket on  $O(2;\underline{1})$ and $S$ can be identified with
$\big(O(2;\underline{1})/\Bbbk 1,\{\,\cdot\,,\,\cdot\,\}_\omega\big)$ as Lie algebras.
Moreover, $S_p={\rm Der}(S)=T\oplus S$ where
$T=\Bbbk (1+x_1)\del_1\oplus \Bbbk(1+x_2)\del_2$; see
\cite[p.~42]{Str09} (as before, we identify $S$ with $\ad S$). We have shown in part~(a)  that $S_p$ is isomorphic to the $p$-envelope of $S$ in $\g$.
It is well known that decomposing $S$ into weight spaces with respect to $T$ enables one to identify $S$ with the Block algebra ${\rm Bl}(\F_p\oplus\F_p)$ which is the $\Bbbk$-span of the symbols $v_\alpha$ for $\alpha\in (\F_p\oplus\F_p)\setminus\{0\}$ with Lie bracket given by
\begin{equation}\label{block}[v_\alpha,v_\beta]=(\alpha|\beta)v_{\alpha+\beta}\end{equation}
where $(\,\cdot\,|\,\cdot\,)$ is a non-degenerate skew-symmetric $\F_p$-valued bilinear form on $\F_p\oplus \F_p$; see \cite[Theorem~10.3.2]{Str09}. Here $\F_p\oplus\F_p$ is nothing but the dual space of $T^{\rm tor}=\F_p(1+x_1)\partial_1\oplus \F_p(1+x_2)\partial_2$ and $(\F_p\oplus\F_p)\setminus\{0\}$ identifies with the set of all weights of $T$ on $S$. By \cite[Lemma~4.6.4]{BW} or by the proof of \cite[Theorem~10.7.3]{Str09}, any non-trivial irreducible restricted $S_p$-module $E$ has $p^2-1$ nonzero $T$-weights, all of the same multiplicity (depending on $E$). From this it is immediate that
any nonzero toral element of $T$ is $p$-balanced in $\g$.

Pick linearly independent $\alpha,\beta \in\F_p\oplus \F_p$ and set
$V=\Bbbk v_\alpha\oplus \Bbbk v_\beta$, a $2$-dimensional subspace of $\g$ There is a toral element $h\in T$ with $\alpha(h)=\beta(h)=2$. This element is $p$-balanced in $\g$ by the preceding remark. By \cite[Theorem~10.3.2(5)]{Str09}, there exist linearly independent toral elements $t_\alpha,t_\beta\in T$ such that $v_\alpha^{[p]}=at_\alpha$ and $v_\beta^{[p]}=bt_\beta$
for some $a,b\in\Bbbk^\times$. Combining (\ref{block}) with Jacobson's formula for $p$-th powers one observes that
$$(\lambda v_\alpha+\mu v_\beta)^{[p]}-\lambda^pa t_\alpha-\mu^pbt_\beta\in\, \textstyle{\sum}_{\gamma\in (\F_p\alpha+\F_p\beta)\setminus\{0\}}\,\Bbbk v_\gamma$$
for all $\lambda,\mu\in \Bbbk$. This implies that
$V\cap\N_p(\g)=0$. On the other hand, it follows from (\ref{e6}), (\ref{e7}), (\ref{e8-7-1}), 
(\ref{e8-7-2}) and (\ref{f4-5-1})
that $\g(h,2)\cap \N_p(\g)$ contains a subspace of codimension $\le 1$ in $\g(h,2)$. Since $V\subset \g(h,2)$ is $2$-dimensional we reach a contradiction thereby proving that the case $S\cong H(2;\underline{1};\Phi(\tau))^{(1)}$ is impossible.

(g) Finally, suppose $S\cong H(2;\underline{1};\Phi(1))$.
It is well known that $S$ is isomorphic to an Albert--Zassenhaus algebra $L(\Gamma,\Theta)$ where $\Gamma$ is an additive subgroup of $\Bbbk$  and $\Theta\colon\, \Gamma\to \Bbbk$ is a group homomorphism. Recall that $L(\Gamma,\Theta)$ is spanned by the symbols $u_\alpha$ with $\alpha\in \Gamma$ and
 \begin{equation}\label{AZ}
[u_\alpha,u_\beta]=\big(\beta-\alpha+\alpha\Theta(\beta)-
 \beta\Theta(\alpha)\big)u_{\alpha+\beta}.
 \end{equation}
All isomorphism types of Albert--Zassenhaus algebras of a given dimension are determined in \cite{BIO}.
Since in the present case $\dim S=|\Gamma|=p^2$, we may assume further that $\Gamma=\F_{p^2}$, the set of all roots of $X^{p^2}-X=0$ in $\Bbbk$, and $\Theta={\rm Fr}$, the Frobenius automorphism of $\F_{p^2}$; see \cite[Corollary~5.3]{BIO}. The $[p]$-powers $u_\alpha^{[p]}=(\ad u_\alpha)^p$ with $\alpha\in \F_{p^2}$ span a self-centralising $2$-dimensional torus $T$
of $S_p\cong \Der(S)$ such that $T\cap S=\Bbbk u_0$. It has $p^2$ weights on $S$
each of multiplicity $1$; see \cite[Theorem~10.4.6]{Str09}. The corresponding weight spaces are nothing but $\Bbbk u_\beta$ with $\beta\in\F_{p^2}$.
Since $[u_0,u_\beta]=\beta u_\beta$ for all $\beta\in\F_{p^2}$ by (\ref{AZ}), it is straightforward to see that $u_0^{[p]}\not\in \Bbbk u_0$. Hence
$T=\Bbbk  u_0\oplus \Bbbk u_0^{[p]}$ and $T\cap S$ is not a restricted subalgebra of $S_p$. Applying
\cite[Lemma~4.8.1]{BW} we now deduce that any non-trivial irreducible restricted $S_p$-module $E$ has $p^2-1$ nonzero $T$-weights, all of the same multiplicity (depending on $E$). As before, this implies that
any nonzero toral element of $T$ is $p$-balanced in $\g$.
It follows from (\ref{AZ}) that
\begin{eqnarray}\label{bracket}
[u_\alpha,u_{\beta+k\alpha}]&=&\big(\beta+(k-1)\alpha+
\alpha(\beta^p+k\alpha^p)-(\beta+k\alpha)\alpha^p\big)
u_{\beta+(k+1)\alpha}\\
\nonumber&=&\big((\beta(1-\alpha^p)+\alpha(\beta^p+k-1)\big)
u_{\beta+(k+1)\alpha}.
\end{eqnarray}
for all $\alpha,\beta\in\F_{p^2}$ and all $k\in\F_p$.
Taking $\alpha=1$, the identity element of $\F_{p^2}$, we get
$$[u_1,u_{\beta+k}]=\big(\beta^p+k-1\big)u_{\beta+k+1}
\qquad\, (\forall\, \beta\in\F_{p^2}).$$
Therefore, $[u_1^{[p]},u_\beta]=\prod_{i\in\F_p}(\beta^p+i)\cdot u_\beta\,=\,(\beta^{p^2}-\beta^p)u_\beta=(\beta-\beta^p)u_\beta$ for all $\beta\in \F_{p^2}$ which yields $u_1^{[p]}=u_0-u_0^{[p]}$. This means that $u_1^{[p]}\ne 0$ and $(\ad u_1)^{p-1}(u_\beta)\ne 0$ whenever $\beta\not\in\F_p$.
Applying (\ref{bracket}) with $(\alpha,\beta)=(\beta,1)$ we get
$$[u_\beta,u_{1+k\beta}]=\big(1+k\beta-\beta^p\big)
u_{1+(k+1)\beta}\qquad\,(\forall\, \beta\in\F_{p^2}).$$
Note that $\Theta^2={\rm Fr}^2={\rm Id}$. If $\beta\in\F_{p^2}$ is such that $\beta^p=-\beta$
then $[u_\beta, u_{1+k\beta}]=\big(1+(k+1)\beta\big)u_{1+(k+1)\beta}$, implying
\begin{eqnarray*}
(\ad u_\beta)^{p-1}(u_1)
&=&\big(\textstyle{\prod}_{i\in\F_p^\times}(1+i\beta)\big)u_{1-\beta}=\beta^{p-1}
\big(\textstyle{\prod}_{i\in\F_p^\times}(\beta^{-1}+i)\big)u_{1-\beta}\\
&=&\beta^{p-1}(\beta^{1-p}-1)u_{1-\beta}
=(1-\beta^{p-1})u_{1-\beta}=\beta^{-1}(\beta-\beta^p)u_{i-\beta}=2u_{1-\beta}.
\end{eqnarray*}
It follows that $[u_\beta^{[p]},u_1]=2[u_\beta,u_{1-\beta}]=2u_1$. In particular, $u_\beta^{[p]}\ne 0$.

Let  $V$ be the $\Bbbk$-span of $u_1$ and $u_\beta$ where $\beta^p=-\beta$. As $[u_0,u_1]=[u_0^{[p]},u_1]=u_1$ and $[u_0,u_\beta]=-[u_0^{[p]},u_\beta]=\beta u_\beta$, the $T$-weights of $u_1$ and $u_\beta$ are
$\F_p$-independent. Therefore, they form a basis of $T^*$. As a consequence, there exists a nonzero toral element $h\in T$ such that $[h,v]=2v$ for all $v\in V$.
This element must be $p$-balanced in $\g$ by our earlier remarks.
Jacobson's formula for $p$-th powers shows that
$$(\lambda u_1+\mu u_\beta)^{[p]}-\lambda^{p-1}\mu (\ad u_1)^{p-1}(u_\beta)-\lambda\mu^{p-1}(\ad u_\beta)^{p-1}(u_1)\,\in T\oplus\textstyle{\sum}_{\alpha\not\in \{0,\pm(\beta-1)\}}\,\Bbbk u_\alpha
$$ for all $\lambda,\mu\in\Bbbk$. Since both
$(\ad u_1)^{p-1}(u_\beta)\in S_{\beta-1}$ and $(\ad u_\beta)^{p-1}(u_1)\in S_{1-\beta}$ are nonzero by our choice of $\beta$ we now deduce that $V\cap \N_p(\g)=0$
(one should also keep in mind here that $u_1^{[p]}\ne 0$ and $u_\beta^{[p]}\ne 0$).
At this point we can argue as at the end of part~(f) to conclude that $S\not\cong H(2;\underline{1};\Phi(1))$. This completes the proof.
\end{proof}
\subsection{Counterexamples to Morozov's theorem in bad characteristic: type ${\rm E}_8$}\label{4.2} Suppose $G$ is a group of type ${\rm E}_8$ and $p=5$. Although $5$ is a bad prime for $G$ there is a bijection between the nilpotent orbits in $\g=\Lie(G)$ and $\g_{\mathbb C}=\Lie(G_{\mathbb C})$ which preserves the orbit dimensions. In the notation of \cite[2.6]{P03}, each $G$-orbit $\OO\subset \N(\g)$ has the form $\OO=\OO(I,J)$ for a suitable pair $(I,J)\in\mathcal{P}(\Pi)$
 and by\cite[Theorem~1.4]{CP} each cocharacter $\lambda_{I,J}\in X_*(G)$ constructed in \cite[2.6]{P03} is still optimal in the sense of the Kempf--Rousseau theory for a nice representative
$e_{I,J}\in\OO(I,J)$. In particular, the stabiliser $G_{e_{I,J}}$ is contained in the parabolic subgroup $P(\lambda_{I,J})$.

Due to the above-mentioned bijection between the nilpotent orbits of $\g$ and $\g_{\mathbb C}$
each Hesselink stratum of $\N(\g)$ (i.e. each Lusztig's nilpotent piece of $\g$) is a single $G$-orbit; see \cite{CP} for details. This means that for any $(I,J)\in\mathcal{P}(\Pi)$ the orbit
$\big(\Ad\,P(\lambda_{I,J})\big)\cdot e_{I,J}$ is Zariski dense in $\bigoplus_{i\ge 2}\g(\lambda_{I,J},i)$ and $\Lie(G_{e_{I,J}})\subseteq \Lie(P(\lambda_{I,J})$.
Comparing the tables in \cite[pp.~90--93]{UGA05} and \cite[pp.~405--407]{Car} one observes that the elements $e\in \N(\g)$ for which $\Lie(G_e)\subsetneq \g_e$  lie in two orbits which have Dynkin labels
${\rm E}_8$ and ${\rm A}_4+{\rm A}_3$\footnote{The tables in \cite{UGA05} are known to contain a number of errors in characteristics $2$ and $3$. These errors are fixed in \cite{Ste} which provides a correct list of representatives of the nilpotent $G$-orbits in bad characteristics together with their Jordan block structure.}.
That $\OO({\rm E}_8)$ has this property was first pointed out in \cite[Theorem~5.9]{Spr}.
Note that $\OO({\rm A}_4+{\rm A}_3)\subset \N_p(\g)$ has dimension $200$ whilst \cite[p.~92]{UGA05} implies that $\dim\g_e=50$ for any
 $e\in \OO({\rm A}_4+{\rm A}_3)$.
It turns out that in characteristic $5$ the centraliser $\g_e$ has very unusual properties.

By \cite[p.~86]{UGA05}, the standard representative
$e=\sum_{\alpha\in\Pi\setminus \{\alpha_5\}}\,e_\alpha$   still lies in
$\OO({\rm A}_4+{\rm A}_3)$ which in conjunction with \cite[Theorem~5.2]{CP} shows that the cocharacter $\tau\in X_*(G)$ from
\cite[p.~148]{LT11} is still optimal for $e$.
Using \cite[p.~148]{LT11}
%\cite[pp.~237, 238]{LT07} 
it is not hard to check that
$\g_e(\tau,0)$ is spanned by an $\sl_2$-triple
$\{e_0, h_0, f_0\}$ and $\g_e(\tau,1)$ is an irreducible module for $\g_e(\tau,0)\cong \sl_2$ of highest weight $3$.
In \cite[\S\,4]{LMT} (which also deals with $\OO({\rm A}_4+{\rm A}_3)$ in characteristic $5$) one finds two nonzero elements
$X,Y\in\g_e(\tau,-1)$ and checks directly that $[h_0,X]=X$ and $[h_0,Y]=-Y$.
On the other hand, the preceding remarks entail that $ \Lie(G_e)\subseteq \bigoplus_{i\ge 0}\,\g_e(\tau,i)$ has dimension $48$. As $\dim \g_e=50$ we now deduce that $$\g_e=\textstyle{\bigoplus}_{i\ge -1}\,\g_e(\tau,i),\quad \Lie(G_e)\,=\,\textstyle{\bigoplus}_{i\ge 0}\,\g_e(\tau, i),\quad
\dim \g_e(\tau,-1)=2.$$
%Since the vectors from $\bigoplus_{i\ge 0}\,\g_e(\tau,i)$ %displayed in \cite[pp.~237, 238]{LT07} are still linearly %independent in characteristic $5$, they form a basis for %$\Lie(G_e)$.

Set $h:=({\rm d}\tau)(1)$ and $\n_e:=\n_\g(\Bbbk e)$. As $[h,e]=2e$ we have that $\n_e=\Bbbk h\oplus\g_e$.
As the torus $\tau(\Bbbk^\times)$ acts on $\g_e$ by Lie algebra automorphisms
the radical of $\g_e$ is a graded subspace of $\g_e=\bigoplus_{i\ge -1}\,\g_e(\tau,i)$. In particular, it is $(\ad h)$-stable.
At the author's request Thomas Purslow has checked the following by using some standard GAP routines:
\begin{itemize}
\item[(i)\,] the radical $A$ of $\g_e$ is abelian and has dimension $24$;

\smallskip

\item[(ii)\,] the Lie subalgebra $\g_e'$ of $\g_e$ generated by $\g(\tau, 1)$ and $\g(\tau, -1)$ has dimension $47$;

\smallskip

\item[(iii)\,] the normaliser $\mathfrak{w}:=\n_\g(A)$ has dimension $74$;

\smallskip

\item[(iv)\,] the Lie algebra $\mathfrak{w}/A$ is simple and restricted.
\end{itemize}
%We are going to rely on this data in proving our next %result:
\begin{theorem}\label{frakw} The following are true for any $e\in \OO({\rm A}_4+{\rm A}_3)$:
\begin{itemize}
\item[(1)\,] $A\subset \g_e'$ and $\g_e'/A\,\cong\, H(2;\underline{1})^{(2)}$ as Lie algebras.

\smallskip

\item[(2)\,] $A={\rm rad}(\g_e)$ and $\g_e/A\,\cong\, H(2;\underline{1})$ as Lie algebras.

\smallskip

\item[(3)\,] $A={\rm rad}(\n_e)$ and $\n_e/A\,\cong\, {\rm Der}\big(H(2;\underline{1})^{(2)}\big)$ as Lie algebras.

\smallskip

\item[(4)\,] $A={\rm rad}(\mathfrak{w})$ and $\mathfrak{w}/A\,\cong\, W(2;\underline{1})$ as Lie algebras.

\smallskip

\item[(5)\,] $A\,\cong\,\big(O(2;\underline{1})/\Bbbk 1\big)^*$ as $W(2;\underline{1})$-modules.
\smallskip

\item[(6)\,] $A\subset \N(\g)$ and $\mathfrak{w}$ is a maximal Lie subalgebra of $\g$.
\end{itemize}
\end{theorem}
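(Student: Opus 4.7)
The backbone is the identification $\mathfrak{w}/A\cong W(2;\underline{1})$ of part~(4); once this is combined with the description of $A$ as a $\mathfrak{w}/A$-module in part~(5), the structural statements (1)--(3) follow by pulling back the canonical chain $H(2;\underline{1})^{(2)}\subset H(2;\underline{1})\subset \Der(H(2;\underline{1})^{(2)})\subset W(2;\underline{1})$, and the inclusion $A\subseteq\N(\g)$ in~(6) reduces to a weight computation. Maximality of $\mathfrak{w}$ will be the main obstacle.

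For part~(4), GAP gives that $\mathfrak{w}/A$ is a simple restricted Lie algebra of dimension $74-24=50=2p^2$ in characteristic $p=5$. By the classification of finite-dimensional simple Lie algebras over an algebraically closed field of characteristic $p>3$, $W(2;\underline{1})$ is the only such algebra of dimension $50$ at $p=5$: no classical simple algebra has dimension $50$, the simple Hamiltonian, special and contact algebras in characteristic $5$ have dimensions different from $50$, the Melikian algebra has dimension $125$, and among Witt--Jacobson algebras only $W(2;\underline{1})$ has dimension $2p^2=50$. For part~(5), the restricted $W(2;\underline{1})$-module $A$ of dimension $p^2-1=24$ is one of $O(2;\underline{1})/\Bbbk 1$ or $(O(2;\underline{1})/\Bbbk 1)^{*}$ by the classification of restricted irreducible $W(2;\underline{1})$-modules, and these two options are separated by computing the weights of the Cartan subalgebra $\Bbbk x_1\partial_1\oplus \Bbbk x_2\partial_2$ on $A$ through the $\tau$-grading on $\g_e$ and matching with the weight system of the coadjoint module.

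For parts (1)--(3), the quotients $\g_e'/A$, $\g_e/A$, $\n_e/A$ have dimensions $23$, $26$, $27$. Since $\g_e'/A$ is generated by $\g_e(\tau,1)$ and $\g_e(\tau,-1)$, it is perfect; being of dimension $p^2-2$, it must be the unique simple subalgebra $H(2;\underline{1})^{(2)}$ of $W(2;\underline{1})$ of that dimension. The image of $\g_e/A$ has dimension $p^2+1$ and contains $H(2;\underline{1})^{(2)}$, and the only subalgebra of $W(2;\underline{1})$ with those properties is the Hamiltonian algebra $H(2;\underline{1})$ (the stabiliser of $\omega=\mathrm{d}x_1\wedge \mathrm{d}x_2$). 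Finally $\n_e/A=\Bbbk\bar h+\g_e/A$ is the $p$-envelope of $\g_e/A$, which equals $\Der(H(2;\underline{1})^{(2)})$. For the first half of~(6), the image $\bar h$ of $h=(\mathrm{d}\tau)(1)$ in $W(2;\underline{1})$ acts on $A\cong (O(2;\underline{1})/\Bbbk 1)^{*}$ with only nonzero weights, so $A$ contains no nonzero toral element of $\g$; being a restricted abelian subalgebra, $A$ must consist of $[p]$-nilpotent elements, yielding $A\subseteq\N(\g)$.

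The main obstacle is maximality of $\mathfrak{w}$ in $\g$, equivalently irreducibility of the adjoint $\mathfrak{w}$-module $\g/\mathfrak{w}$ of dimension $248-74=174$. The plan is to decompose $\g$ into eigenspaces with respect to a two-dimensional toral subalgebra of $\mathfrak{w}$ obtained by combining the image of $\tau$ with a Cartan subalgebra of $\mathfrak{w}/A\cong W(2;\underline{1})$, to read off the $\mathfrak{w}$-module structure on $\g/\mathfrak{w}$ using the identification from part~(5), and to verify irreducibility by direct comparison with the list of restricted irreducible $W(2;\underline{1})$-modules, while carefully accounting for the possibly nontrivial action of $A$ on $\g/\mathfrak{w}$. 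In view of the intricate character of this step, a direct computer-algebra verification, in the spirit of the GAP checks used to produce $\mathfrak{w}$ in the first place, is likely to be the most practicable route.
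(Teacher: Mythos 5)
The decisive gap is in your treatment of maximality in part~(6). You propose to prove that $\mathfrak{w}$ is maximal by showing that the $\mathfrak{w}$-module $\g/\mathfrak{w}$ (of dimension $174$) is irreducible. Irreducibility would indeed suffice, but it fails here: as the remark following the theorem indicates, $\mathfrak{w}$ determines a Weisfeiler filtration of $\g$ whose associated graded algebra is expected to be $S(3;\underline{1})^{(1)}$ with $\mathcal{G}_0\cong W(2;\underline{1})$ and $\mathcal{G}_1\cong A$, and the negative part of that grading splits into several homogeneous components, so $\g/\mathfrak{w}$ has several $\mathfrak{w}$-composition factors (note also that $174$ is not of the form $p^2\dim E_0$, nor the dimension of any exceptional restricted irreducible $W(2;\underline{1})$-module). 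The paper's actual argument is entirely different: assuming a proper intermediate restricted subalgebra $\mathcal{L}$ exists, it shows $\mathcal{L}$ must be semisimple with a unique minimal ideal $S\otimes O(m;\underline{1})$ (Block's theorem), rules out $m>0$ by a weight-space count inside $A$, and then eliminates $S\cong\psl_{10}$, $S=\Lie(\mathcal{H})$ (this step \emph{uses} Theorem~\ref{thm:main} itself), and $S$ of Cartan type one by one. Your fallback to ``a direct computer-algebra verification'' does not engage with any of this and, as stated (checking irreducibility), would return a negative answer.

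Two further points need repair. For part~(5), the weight multiset of the standard maximal torus $\Bbbk x_1\partial_1\oplus\Bbbk x_2\partial_2$ on $O(2;\underline{1})/\Bbbk 1$ is the set of all nonzero elements of $\F_p^2$, each with multiplicity one; this set is stable under negation, so the weights of the module and of its dual coincide and cannot distinguish them. The paper instead compares the common annihilator of $\partial_1,\partial_2$ in $A$ with the submodule structure $\Bbbk e\subset A$, $A/\Bbbk e\cong L'$. For parts~(1)--(3), your identifications rest on unproved uniqueness assertions (``a perfect subalgebra of $W(2;\underline{1})$ of dimension $p^2-2$ must be $H(2;\underline{1})^{(2)}$'', ``the only $26$-dimensional subalgebra containing it is $H(2;\underline{1})$''); the paper avoids these by applying the Weak Recognition Theorem directly to the $\tau$-graded algebra $\g_e/A$, with $L_0\cong\sl_2$ and $L_{-1}$ irreducible, which is the standard rigorous route and is also logically prior to~(4) rather than a consequence of it.
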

\begin{proof}
(a) Using \cite[p.~148]{LT11} and our earlier remark that $\g_e(\tau,0)\cong\mathfrak{sl}_2$
%\cite[p.~237]{LT07} 
it is straightforward to see that $\g_e(\tau,1)$ is an
irreducible $4$-dimensional $\g_e(\tau,0)$-module.
If $[\g_e(\tau,-1),\g_e(\tau,1)]=0$ then the
Engel--Jacobson theorem implies that
$\sum_{i\ne 0}\,\g_e(\tau,i)$ is a nilpotent
ideal of codimension $3$ in $\g_e$. As this contradicts
(i) it must be that $[\g_e(\tau,-1),\g_e(\tau,1)]=\g_e(\tau,0)$ and $A\subseteq \bigoplus_{i\ge 2}\,\g_e(\tau,i)$.
Let $L=\g_e/A$. As $A$ is a graded subspace of $\g_e$ we have that $L=\bigoplus_{i\ge -1}\,L_i$ where $L_i=\g_e(\tau,i)/A\cap\g_e(\tau,i)$. Also, $L_0\cong \sl_2$, and $L_{-1}$ is an irreducible $L_0$-module. Since $L$ is semisimple it satisfies the conditions of the Weak Recognition Theorem; see \cite[Theorem~2.66]{BGP}. As $L$ is a restricted Lie algebra, $L_0\cong \sl_2$ and $L_1\not\cong L_{-1}^*$ applying that theorem shows that
$L$ is sandwiched between $H(2;\underline{1})^{(2)}$
and $H(2;\underline{1})$. As a consequence,
the Lie
subalgebra $L'$ generated by $L_{\pm 1}$ is isomorphic to $H(2;\underline{1})^{(2)}$, so that $\dim L'=p^2-2=23$.
In conjunction with (i) and (ii) this gives $A\subset \g_e'$ proving (1).

(b) As $L\subseteq H(2;\underline{1})$ and
$\dim H(2;\underline{1})=p^2+1=26=\dim\g_e-\dim A=\dim L$ by \cite[2.10]{BGP} and (i), the equality must hold, i.e. $L=H(2;\underline{1})$. This proves (2).
Since $\n_e=\Bbbk h\oplus\g_e$ and $A$ is $(\ad h)$-stable, we have that $A={\rm rad}(\n_e)$.
In this situation \cite[Theorem~2.66]{BGP} applies to the graded Lie algebra $\n_e/A$ forcing $$H(2;\underline{1})\cong L\subsetneq \n_e/A\subseteq CH(2;\underline{1}).$$ As $CH(2;\underline{1})\cong \Der\big(H(2;\underline{1})^{(2)}\big)$ by \cite[Theorem~7.1.2(2)]{Str09}, statement~(3) follows.

(c)As $L_{-i}=0$ for $i>1$
it also follows from \cite[Theorem~2.66]{BGP} that the grading of the Cartan type Lie algebra $L=H(2;\underline{1})$ is standard. In particular,
 $L_{k}=0$ for $k\ge 2p-4=6$. In view of \cite[p.~148]{LT11} 
%\cite[p.~237]{LT07} 
 this means that $A$ contains $\g_e(\tau, 9)$, a $2$-dimensional irreducible $\g_e(\tau, 0)$-module
contained in the centre of the Lie algebra $\bigoplus_{i>0}\,\g_e(\tau,i)$. Since $\Bbbk e$ is a trivial submodule of the $L$-module $A$ and $A/\Bbbk e$ has dimension $23=p^2-2$, we can apply \cite[Corollary~510]{RH} to conclude that $A/\Bbbk e\cong L'$ as $L$-modules.

Since no Lie algebras of the form $\psl_{kp}$ or
$\Lie(\mathcal{H})$, where $\mathcal H$ is a simple algebraic $\Bbbk$-group, have  dimension $50=\dim\mathfrak{w}/A$, the Classification Theorem
from \cite{PS6} implies that $\mathfrak{w}/A$ is isomorphic to a restricted Lie algebra of Cartan type. As $\dim W(m;\underline{1})=m5^{m}$ for $m\ge 1$,
$\dim S(m;\underline{1})^{(1)}=(m-1)(5^m-1)$ for $m\ge 3$, $\dim H(2m;\underline{1})^{(2)}=5^{2m}-2$ for $m\ge 1$, and $\dim K(2m+1,\underline{1})\ge 5^{2m+1}-1$ for $m\ge 1$, we have only one option, namely, $\mathfrak{w}/A\cong W(2;\underline{1})$.
This proves (4).

(d) Since $A$ is a graded subspace of $\g_e$ and $A\cap\g_e(\tau,i)=0$ for $i=\{-1,0,1\}$ by our remarks in part~(a) we have that $A\subseteq \bigoplus_{i\ge 2}\,\g_e(\tau, i)$. In particular, $A\subset N(\g)$.
Since $\mathfrak{w}$ is $(\Ad\,\tau(\Bbbk^\times))$-stable and
$A(\tau,i)=0$ for $i\le 1$, it must be that
$[\mathfrak{w}(\tau,k),e]=0$ for $k\le -1$. This forces
$\mathfrak{w}=
\g_e(\tau,-1)\oplus \big(\bigoplus_{i\ge 0}\mathfrak{w}(\tau,i)\big).$ As $W(2;\underline{1})$ has a unique subalgebra of codimension $2$ by Kreknin's theorem, we  now obtain that the image of $\bigoplus_{i\ge 0}\mathfrak{w}(\tau,i)$ in $\mathfrak{w}/A$ coincides with the standard maximal subalgebra of $\mathfrak{w}/A\cong W(2;\underline{1})$. Consequently,  $\mathfrak{w}(\tau, 0)=\n_e(\tau, 0)\cong \gl_2$.

Since $A$ is a non-trivial restricted $W(2;\underline{1})$-module of dimension $p^2-1$, it follows from
\cite[Theorem~4.2]{RH1} that either $A\cong O(2;\underline{1})/\Bbbk 1$ or $A\cong \big(O(2;\underline{1})/\Bbbk 1\big)^*$ as $W(2;\underline{1})$-modules.
In the former case the annihilator $A^{\g_e(\tau,-1)}$ of $\g_e(\tau,-1)\cong
\Bbbk \partial_1\oplus\Bbbk\partial_2$ in $A$ is a
$2$-dimensional irreducible module over $\g_e(\tau,0)\cong\sl_2$ which generates an irreducible $L$-submodule isomorphic to $L'$. Since $\Bbbk e$ is
a trivial $L$-submodule of $A$, this would imply
that $\dim A^{\g_e(\tau,-1)}\ge 3$ contrary to the fact that $\big(O(2;\underline{1})/\Bbbk 1\big)^{\partial_1}\cap
\big(O(2;\underline{1})/\Bbbk 1\big)^{\partial_2}$ is $2$-dimensional. So $A\cong \big(O(2;\underline{1})/\Bbbk\big)^*$ proving (5).

(e) It remains to show that $\mathfrak{w}$ is a maximal subalgebra of $\g$. Suppose this is not the case and let $\mathcal{L}$ be a proper restricted Lie subalgebra of $\g$ with $\mathfrak{w}\subsetneq \mathcal{L}$. If $I$ is a nonzero ideal of $\mathcal{L}$ then $I^A\ne 0$ and hence $A\subseteq I$ by the irreducibility of the $\mathfrak{w}$-module $A$. If ${\rm nil}(\mathcal{L})\ne 0$ then we can take $\mathfrak{z}({\rm nil}(\mathcal{L}))\ne 0$ for $I$ to conclude that
${\rm nil}(\mathcal{L})$ is a nilpotent ideal of $\g_e$. But then ${\rm nil}(\mathcal{L})=A$, by part~(a),
forcing $\mathcal{L}\subseteq \mathfrak{w}$. As this contradicts our choice of $\mathcal{L}$ we deduce that ${\rm nil}(\mathcal{L})=0$.

Suppose ${\rm rad}(\mathcal{L})\ne 0$. Then $\mathcal{L}$ contains a nonzero abelian restricted ideal. Since ${\rm nil}(\mathcal{L})=0$ this ideal must contain a nonzero $[p]$-semisimple element of $\g$, say $z$. As $(\ad_{\mathcal{L}}z)^2=0$ and ${\rm Ker}\,\ad_{\mathcal{L}} z={\rm Ker}\,(\ad_{\mathcal{L}} z)^2$ by the semisimplicity of $\ad_{\mathcal{L}} z$, it must be that $\z(\mathcal{L})\ne 0$.
Taking $I=\z(\mathcal{L})$ we then obtain
that $A\subseteq \z(\mathcal{L})$ which, in turn,
yields $\mathcal{L}\subseteq\g_e$, a contradiction.

Thus from now on we may assume that $\mathcal{L}$ is semisimple. If $\mathcal{L}$ contains two distict minimal ideals $I_1$ and $I_2$ then the above shows that
$A\subseteq I_1\cap I_2=0$, a contradiction. Therefore,
$\mathcal{L}$ contains a unique minimal ideal, $J$ say. In this situation  Block's theorem says that there exist a simple Lie algebra $S$ and a nonnegative integer $m$ such that
$J\cong S\otimes O(m;\underline{1})$ as Lie algebras
and $$S\otimes O(m;\underline{1})\subset \mathcal{L}\subseteq \big(\Der(S)\otimes O(m;\underline{1})\big)\rtimes \big({\rm Id}_S\otimes W(m;\underline{1})\big).$$ Furthermore, since $\mathcal{L}$ is restricted the image of $\mathcal{L}$ under the canonical projection $$\pi\colon\,\big(\Der(S)\otimes O(m;\underline{1})\big)\rtimes \big({\rm Id}_S\otimes W(m;\underline{1})\big)\rightarrow\, W(m;\underline{1})$$
is a transitive subalgebra of $W(m;\underline{1})$, i.e. does not preserve the maximal ideal of the commutative algebra $O(m;\underline{1})$; see \cite[Corollary~3.3.5]{Str04}. Since $\dim \g=248$,
$\dim S\ge 3$ and $\dim O(n;\underline{m})=5^m$
we have that $m\in\{0,1,2\}$.

(f) Suppose $m>0$ and let $\m$ be the maximal ideal of the local algebra $O(m;\underline{1})$. If $\pi(\mathfrak{w})=0$ then $\mathfrak{w}\subseteq \Der(S)\otimes O(m;\underline{1})$. In this case the abelian ideal $J_0:=S\otimes \m^{m(p-1)}$ of $J$ is an $(\ad \mathfrak{w})$-stable and $J_0^A$ is a nonzero $\mathfrak{w}$-submodule of $J_0$.
But then $J_0^A\subset \g_e\subset \mathfrak{w}$ is an abelian ideal of $\mathfrak{w}$ implying $J_0^A=A$. As $J_0^A\subseteq
\z\big(S\otimes \m\big)$ and $S\otimes \m={\rm nil}(J)$ this entails that $S\otimes \m$ is a nilpotent ideal of  $\g_e$ containing $A$. But then $S\otimes\m=A=J_0^A\subseteq S\otimes \m^{m(p-1)}$, a contradiction. We thus deduce that $\pi(\mathfrak{w})$ is a nonzero Lie subalgebra of $W(m;\underline{1})$.

Since $A\subset J$ by part~(e) and $W(2;\underline{1})\cong\mathfrak{w}/A$ is a simple Lie algebra,  the above discussion shows that $\pi$ identifies $W(2;\underline{1})$ with a nonzero Lie subalgebra of $W(m;\underline{1})$. The concluding remark in part~(e) now shows that $m=2$ and the restriction of $\pi$ to $\mathfrak{w}$ is surjective. Since in this situation $\dim S<10$ it is immediate from \cite[Theorem~1.1]{PS6} that $\Der(S)=\ad S\cong S$, so that $$\mathcal{L}=J+\mathfrak{w}\,\cong\big(S\otimes O(2;\underline{1})\big)\rtimes\big({\rm Id}_S\otimes \big(W(2;\underline{1})\big)\ \mbox{ and }\  J\cap\mathfrak{w}=A. $$
The algebra $O(2;\underline{1})$ is spanned by the monomials $x^{\bf a}:=x_1^{a_1}x_2^{a_2}$ with $0\le a_i\le p-1$.
As $e\in S\otimes O(2;\underline{1})$ we can
write $e=s_0\otimes 1+\sum_{|{\bf a}|\ge 1}s_{\bf a}\otimes x^{\bf a}$ for some $s_0,s_{\bf a}\in S$.
As $A$ is an irreducible $\mathfrak{w}$-module and $W(2;\underline{1})$ has no Lie subalgebras of codimension $1$, it follows from Kreknin's theorem that $\mathfrak{w}$ contains a unique Lie subalgebra of codimension $2$, namely, the inverse image of the standard maximal subalgebra $W(2;\underline{1})_{(0)}$
under the canonical homomorphism $\mathfrak{w}\twoheadrightarrow W(2;\underline{1})$; we call it $\mathfrak{w}_{(0)}$.
It is immediate from our discussion in part~(d) that $\mathfrak{w}=\g_e(\tau,-1)\oplus\mathfrak{w}_{(0)}$.
This yields that $\pi(\g_e(\tau,-1))$ contains elements of the form $\partial_1+u_1$ and $\partial_2+u_2$ for some $u_1,u_2\in W(2;\underline{1})_{(0)}$. As $e$ commutes with $\g_e(\tau,-1)$  this implies that $s_0\ne 0$.

Let $\mathbf{p-1}=(p-1,p-1)$. Since $s_0\otimes x^\mathbf{p-1}$ commutes with $e$ we have that $s_0\otimes x^\mathbf{p-1}\in \g_e\cap J\subseteq \mathfrak{w}\cap J=A$. Applying to $s_0\otimes x^\mathbf{\bf{p-1}}\ne 0$ the endomorphisms $\ad y$ with $y\in\g_e(\tau,-1)$ we observe that for any ${\bf a}=(a_1,a_2)$ with $0\le a_1,a_2\le p-1$ the $\mathfrak{w}$-module $A$ contains an element of the form
$s_0\otimes x^{\bf a}+\sum_{|{\bf b}|>|{\bf a}|}\,
t_{\bf b}\otimes x^{\bf b}$ for some $t_{\bf b}\in S$.
But then $p^2-1=\dim A\ge \dim O(2;\underline{1})=p^2$. This contradiction shows that $m=0$.

(g) From now on we may assume that any proper restricted Lie subalgebra $\mathcal{L}$ of $\g$ containing $\mathfrak{w}$ is sandwiched between $S\cong \ad S$ and $\Der(S)$ for some
simple Lie algebra $S$. It follows from \cite[Chapter~7]{Str04} and the Classification Theorem proved in \cite{PS6} that the Lie algebra of outer derivations of $S$ is solvable. Since the Lie algebra
$\mathfrak{w}$ is perfect we have that $\mathfrak{w}\subset S$. Therefore, no generality will be lost by assuming that $\mathfrak{w}$ is a maximal subalgebra of $S$.

Suppose $S\cong\psl_{5k}$ for some $k\ge 1$. Since $\dim S> 74$
by (iii) and $5k-2\,=\,{\rm TR}(S)\,\le\, {\rm TR}(\mathcal{L})\,\le\, {\rm TR}(\g)\,=\,8$ by
\cite{P87}, \cite{P90} and
\cite[Theorem~1.2.7]{Str04} it must be that $k=2$.
Since $\Der(S)\cong\mathfrak{pgl}_{10}$ by \cite[Lemma~2.7]{BGP}
and ${\rm TR}(\mathfrak{pgl}_{10})=9$ by \cite{P87} and \cite{P90}
the restricted Lie algebras $\mathcal{L}$ and $\psl_{10}$
are isomorphic (one should keep in mind here that $\mathcal{L}$ contains $\ad S$ which has
codimension $1$ in $\Der(S)$). As a consequence,
$\psl_{10}$ contains a maximal toral subalgebra of $\g$, say $\t$.
In view of \cite[Theorem~13.3]{Hum} or \cite[11.8]{Borel}
there is a maximal torus $T$ in $G$ with $\t=\Lie(T)$.
Since $p>3$, it follows from \cite[Ch.~II, \S\,3]{Sel}
that all root spaces of $\g$ with respect  to $\t$ are $1$-dimensional and hence  $(\Ad\,T)$-stable.
In conjunction with \cite[Ch.~II, \S\,4]{Sel} this implies the set $\Phi_0:=\{\gamma\in\Phi(G,T)\,|\, \mathcal{L}_\gamma\ne 0\}$ is a root subsystem of the root system $\Phi=\Phi(G,T)$.
As $|\Phi_0|=90$, it is immediate from the Borel--de Siebenthal theorem that
$\Phi_0$ is contained in a maximal root subsystem  of type
${\rm E}_7+{\rm A}_1$ or ${\rm D}_8$ in $\Phi$. The simplicity of the restricted Lie algebra $\mathcal{L}$ then entails that it embeds into a restricted Lie algebra $\g_0$ of type ${\rm E}_7$ or ${\rm D}_8$. The first possibility cannot occur as Lie algebras of type ${\rm E}_7$ do not contain $8$-dimensional toral subalgebras. Hence $\g_0\cong\so(V)$ where $V$ is a $16$-dimensional vector space over $\Bbbk$. Let $\rho\colon\,\mathcal{L}\to \gl(V)$ be the representation of $\mathcal{L}$ induced by inclusion $\mathcal{L}\subset \so(V)$. As $\dim \mathcal{L}>74>60=(\dim \g_0)/2$, the trace form
$(X,Y)\mapsto {\rm trace}\big((\rho(X)\circ \rho(Y)\big)$ on $\mathcal{L}$
is nonzero and hence non-degenerate by the simplicity of $\mathcal{L}\cong\psl_{10}$. Since this contradicts \cite{Bl} we now conclude that $S\not\cong\psl_{5k}$.

Suppose $S\cong \Lie(\mathcal{H})$ for some simple algebraic $\Bbbk$-group $\mathcal{H}$. Since $\dim S<248$ and $S$ is simple, $5$ is a very good prime for $\mathcal{H}$.
Theorem~\ref{thm:main} then says that $\mathfrak{w}=\Lie(\mathcal{P})$ for some maximal parabolic subgroup $\mathcal{P}$ of $\mathcal{H}$.
In view of our discussion at the beginning of part~(e) this implies that $A=\Lie\big(R_u(\mathcal{P})\big)$. Then
$\Lie(\mathcal{P})/\Lie\big(R_u(\mathcal{P})\big)\cong W(2;\underline{1})$ as Lie algebras. Since $\Lie(\mathcal{P})/\Lie\big(R_u(\mathcal{P})\big)$ is isomorphic to the Lie algebra of a reductive $\Bbbk$-group this is false.

Therefore, $S$ is a Lie algebra of Cartan type (not necessarily restricted).
Since $\g\cong\g^*$ as $(\Ad\,G)$-modules, the Lie algebra $\g$ admits a non-degenerate $\g$-invariant bilinear form, $b$ say. We stress that $b\ne \kappa$
as in the present case the Killing form of $\g$ is identically zero. The explicit formulae in \cite[p.~661]{CP} show that the form $b$ is symmetric.
If $\dim S>124$ then the restriction of $b$ to $S$ is nonzero and hence non-degenerate by the simplicity of $S$. Hence $\g=S\oplus E$ where $E=\{x\in\g\,|\,\,b(x,S)=0\}$. Since $b$ is $S$-invariant, $[S,E]\subseteq E$.
Since $\mathfrak{w}\subset S$ by our earlier remarks and $E^A\ne 0$ is $(\ad \mathfrak{w})$-stable, the irreducibility of the $\mathfrak{w}$-module $A$ yields
$A\subseteq E$. But then $A\subseteq S\cap E=0$, a contradiction.
So $74<\dim S\le 124$.
The results proved in \cite[\S\S\,6.3--6.7]{Str04} now imply that $S\cong H(2;(2,1);\Phi)^{(2)}$ where $\Phi\in \{{\rm Id},\Phi(\tau)\}$. In any event, the standard maximal subalgebra $S_{(0)}$ of $S$ is a restricted Lie subalgebra of $\Der(S)$ and $S_{(0)}/{\rm nil}(S_{(0)})\cong \sl_2$; see \cite[Theorems~7.2.2 and 6.3.10]{Str04}.

Since $\mathfrak{w}\subset S$ and $S_{(0)}$ has codimension $2$ in $S$, the Lie algebra $\mathfrak{w}\cap S_{(0)}$ has codimension $\le 2$ in $\mathfrak{w}$. Since $S_{(0)}$ has no homomorphic images isomorphic to $W(2;\underline{1})$ and $W(2;\underline{1})$ has no subalgebras of codimension $1$, our discussion in part~(f) shows that $\mathfrak{w}\cap S_{(0)}=\mathfrak{w}_{(0)}$. Note that ${\rm nil}(\mathfrak{w}_{(0)})$ coincides with the preimage of $W(2;\underline{1})_{(1)}$ under the canonical homomorphism $\mathfrak{w}_{(0)}\twoheadrightarrow W(2;\underline{1})_{(0)}$ and $\mathfrak{w}_{(0)}/{\rm nil}(\mathfrak{w}_{(0)})\cong \gl_2$.
By our remark at the beginning of this part, the
$p$-closure $\mathcal{S}$ of $S$ in $\g$ is semisimple. Therefore, $\mathcal{S}\cong S_p$ as restricted Lie algebras,  where $S_p$ denotes the $p$-envelope of $S$ in $\Der(S)$. It follows that $\mathfrak{w}_{(0)}$ is a {\it restricted} subalgebra of $S_{(0)}$.
But then the  restricted subalgebra
$$
\big(\mathfrak{w}_{(0)}+{\rm nil}(S_{(0)}\big)/{\rm nil}(S_{(0)}\,\cong\,\mathfrak{w}_{(0)}/\big(\mathfrak{w}_{(0)}\cap{\rm nil}(S_{(0)}\big)$$ of $S_{(0)}/{\rm nil}(S_{(0)})\cong \sl_2$ has a copy of $\gl_2$ as a homomorphic image. As this is obviously false, we conclude that $S$ does not exist. Thus $\mathfrak{w}$ is a maximal subalgebra of $\g$ and our proof is complete.
\end{proof}
\begin{conjecture}
Suppose $G$ is a group of type ${\rm E}_8$ and $p=5$. We conjecture that any maximal subalgebra $M$ of $\g$ with ${\rm rad}(M)\ne 0$ is either conjugate to
$\mathfrak{w}$ under the adjoint action of $G$ or has the form $M=\Lie(P)$ for some maximal parabolic subgroup $P$ of $G$ or coincides with the centraliser of a toral element $t$ of $\g$. In the latter case, $M=\Lie(G_t)$ and $G_t$ is a semisimple group of type ${\rm A}_4{\rm A_4}$.
\end{conjecture}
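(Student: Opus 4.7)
The plan is to follow the architecture of the proof of Theorem~\ref{thm:main} but to track carefully the two places where ``very good'' was used: (a) Lemma~\ref{nilne0}, which asserts that the centraliser of a nonzero toral element of $\g$ is a \emph{proper Levi} and hence not maximal; and (b) the exclusion of the case $S\cong \psl_{kp}$ in \S\S\,\ref{3.9}--\ref{3.12} of the argument. For $p=5$ and $G$ of type $\mathrm{E}_8$ the prime is good but not very good, so exactly these two obstructions are revived, and the conjecture predicts that they produce precisely the two extra possibilities for $M$.

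First I would treat the regular case. If $M$ is regular and $\mathrm{nil}(M)\ne 0$, then Lemma~\ref{regsub} (whose proof only requires $p$ good) gives $M=\Lie(P)$ for a maximal parabolic $P$. If $M$ is regular and $\mathrm{nil}(M)=0$ but $\mathrm{rad}(M)\ne 0$, then the argument of Lemma~\ref{nilne0} shows that $M=\g_t$ for a nonzero toral $t\in \g$. In good but not very good characteristic, $\g_t$ can be a \emph{pseudo-Levi} subalgebra rather than a Levi, and for $G$ of type $\mathrm{E}_8$ with $p=5$ the Borel--de~Siebenthal procedure applied to the extended Dynkin diagram $\widetilde{\mathrm{E}}_8$ produces exactly one maximal pseudo-Levi root subsystem of non-Levi type, namely $\mathrm{A}_4+\mathrm{A}_4$ (from removing the unique node of mark $p=5$). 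I would then verify directly that for such $t$ the subalgebra $\g_t$ is actually maximal in $\g$ (any strict overalgebra would normalise a proper ideal of $\g_t$ and hence correspond to an extension of the closed subsystem $\mathrm{A}_4+\mathrm{A}_4$, which has to be either $\g$ itself or a parabolic, contradicting regularity of $M$ combined with the pseudo-Levi structure), whilst every other non-Levi pseudo-Levi of $\g$ is properly contained in one of these two $\mathrm{A}_4+\mathrm{A}_4$ centralisers or in a parabolic. This yields case (3) of the conjecture and rules out all other toral centralisers.

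Next I would treat the non-regular case with $\mathrm{nil}(M)\ne 0$ by running the Weisfeiler filtration machinery of \S\S\,\ref{3.1}--\ref{3.3} and inspecting each of the subsequent cases. The elimination of the degenerate case (\S\S\,\ref{3.5}--\ref{3.7}), of the Cartan type alternatives $S\cong W(m;\underline{n}),\ S(r;\underline{d}),\ K(3;\underline{1}),\ \mathcal{M}(1,1)$ and of the Hamiltonian alternatives (\S\S\,\ref{3.13}--\ref{3.18}, \S\S\,\ref{3.22}--\ref{3.26}) uses only that $p$ is good and dimension bookkeeping, so the arguments transfer verbatim to $p=5$, $\g=\mathrm{E}_8$. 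The remaining case is $S\cong \psl_5$, and the survivor of this case will be the conjectural $\mathfrak{w}$. Here the strategy is to show that the non-degenerate Weisfeiler case forces $A(\bar\G)=\psl_5\otimes O(m;\underline{n})$ with $(m,\underline{n})=(0)$ or $(2,\underline{1})$; the toral rank bound $\mathrm{TR}(\bar\G)\le \mathrm{TR}(\g)=8$ combined with the dimension estimate $\dim\,\g=248$ leaves room only for these two possibilities, and a refined Skryabin/Block argument using restrictable composition factors (Lemma~\ref{graded}) eliminates the case $m=0$, leaving exactly $\psl_5\otimes O(2;\underline{1})$. I would then match the grading data of the Weisfeiler filtration against the explicit description in Theorem~\ref{frakw}: the structure $\mathfrak{w}/A\cong W(2;\underline{1})$ and $A\cong (O(2;\underline{1})/\Bbbk 1)^*$ forces $\bar\G_0$ to act on $\G_{-1}$ as the canonical $\psl_5$-module on its minuscule summand, and the $G$-conjugacy of the resulting $\psl_5\otimes O(2;\underline 1)$-pair can be established either by reducing to Skryabin's rigidity theorem for graded simple subalgebras or by pinning down the optimal cocharacter attached to the (unique up to conjugacy) richardson element $e\in\OO(\mathrm{A}_4+\mathrm{A}_3)$ used in the construction of $\mathfrak{w}$.

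The main obstacle will be the final conjugacy step, i.e.\ proving that \emph{every} surviving $M$ from the Weisfeiler analysis is $(\Ad\,G)$-conjugate to $\mathfrak{w}$ rather than merely abstractly isomorphic to it. The Weisfeiler machinery delivers the pair $(\G_{-1},\G_0)$ up to isomorphism, but converting this into a statement about $G$-orbits of subalgebras of $\g$ requires a modular analogue of the Richardson--Bala--Carter classification for graded semisimple pairs in $\g$, and such a classification is not available in bad characteristic in the literature. A secondary difficulty is that the $p$-balanced toral argument of Proposition~\ref{balanced}, which was the decisive tool in eliminating the Hamiltonian and $\psl_{kp}$ cases in \S\S\,\ref{3.17}--\ref{3.21}, genuinely admits new $p$-balanced orbits in $\mathrm{E}_8$ at $p=5$ (none were listed in Proposition~\ref{balanced} precisely because that proposition was stated only for good $p$); a careful recomputation of $p$-balanced orbits in this specific case, likely with GAP assistance along the lines used by Purslow, would be needed to confirm that the $\psl_5\otimes O(2;\underline{1})$ possibility is the unique one compatible with the structure of $\mathfrak{w}$.
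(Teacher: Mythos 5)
First, a point of order: this statement is a \emph{conjecture} in the paper — the author offers no proof of it — so there is no ``paper's proof'' to compare against, and your proposal is necessarily an attempt at new mathematics. Judged on its own terms, it rests on a false premise and contains several concrete errors. The premise: for ${\rm E}_8$ the prime $5$ divides the coefficient $n_{\alpha_5}=5$ of the highest root, so $p=5$ is a \emph{bad} prime for $G$, not ``good but not very good'' as you assert. The whole apparatus of \S\S 2--3 is developed under the standing hypothesis that $p$ is good, and far more than Lemma~\ref{nilne0} and the $\psl_{kp}$ case breaks down: the Killing form of $\g$ is identically zero here (as the paper itself notes in the proof of Theorem~\ref{frakw}), so the symmetric-subsystem step of Lemma~\ref{regsub} and the trace-form arguments of \S\ref{3.12} and \S\ref{3.24} collapse; Proposition~\ref{balanced} and the $p$-balanced machinery are established only for good $p$; and every elimination in \S 3 that invokes ``$p>5$ when $G$ is of type ${\rm E}_8$'' reopens. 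In particular the cases $W(1;\underline{3})$, $K(3;\underline{1})$, $\mathcal{M}(1,1)$, $H(2;(2,1);\Phi)$, $S\otimes O(1;\underline{1})$ with $S$ Hamiltonian, $\psl_{10}$ (the bound $kp-2\le 8$ now permits $k=2$) and $S(3;\underline{1})^{(1)}$ are all live again; the paper's own \S 4 has to work quite hard just to exclude the Hamiltonian and Melikian possibilities as abstract subalgebras of $\g$ in this situation. Your claim that these eliminations ``transfer verbatim'' is therefore not tenable.

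Second, your identification of the surviving non-degenerate case as $A(\bar{\G})\cong\psl_5\otimes O(2;\underline{1})$ is numerically impossible: that algebra has dimension $23\cdot 25=575>248=\dim\g$. Moreover the paper's closing Remark points in the opposite direction from your plan: the graded Lie algebra attached to $\mathfrak{w}$ is expected to be the \emph{special} algebra $S(3;\underline{1})^{(1)}$, whose dimension $2(5^3-1)=248$ exactly matches $\dim\g$ — precisely the case your outline discards at the stage of \S\ref{3.15} by ``dimension bookkeeping''. What is sound in your proposal is the Borel--de Siebenthal identification of ${\rm A}_4+{\rm A}_4$ (obtained by deleting the node of mark $5$ from $\widetilde{\rm E}_8$) as the source of the toral-centraliser case, the observation that the failure of Lemma~\ref{nilne0} is exactly what admits it, and your candid flagging of the conjugacy problem as the hard step. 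But as written the plan does not constitute a viable route to the conjecture, which remains open.
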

\begin{remark}
It is quite possible that the maximal subalgebra $\mathfrak{w}$ gives rise to a Weisfeiler filtration $\mathcal{F}$ of $\g$ such that the corresponding graded Lie algebra ${\rm gr}_\mathcal{F}(\g)$ is isomorphic to the special Cartan type Lie algebra $S(3;\underline{1})^{(1)}$. This guess is supported by numerology: $\dim S(3;\underline{1})^{(1)}=2(p^3-1)=248$, and the fact that
the Lie algebra $\mathcal{G}=S(3;\underline{1})^{(1)}$ does admit a $\Z$-grading
$\mathcal{G}=\bigoplus_{i\in\Z}\,\mathcal{G}_i$ with $\mathcal{G}_i=0$ for $i\ge 2$,
$\mathcal{G}_0\cong W(2;\underline{1})$ and $\mathcal{G}_1\cong\big(O(2;\underline{1})/\Bbbk 1\big)^*$ as $W(2;\underline{1})$-modules;  see \cite[p.~283]{PS3}. This grading did not feature in our proof of Theorem~\ref{thm:main} because the possibility that $S\cong S(3;\underline{1})^{(1)}$ was  ruled out in \ref{3.15} by dimension arguments.
\end{remark}
\subsection{Non-existence of Melikian subalgebras of $\g$}
We continue assuming that $G$ is a group of type ${\rm E}_8$ and $p=5$.
In this subsection we are going to show that no Melikian algebras $M=\mathcal{M}(m,n)$ can occur as subalgebras of $\g$. Since $\dim \mathcal{M}(m,n)=5^{m+n+1}$ and $\dim \g=248$, the Lie algebras $\mathcal{M}(m,n)$ with $m+n\ge 3$ are immediately ruled out by dimension reasons.

Suppose $M=\mathcal{M}(1,1)$ is a Lie subalgebra of $\g$
and let $\mathcal{M}$ be the $p$-closure of $M$ in $\g$.
The centre $\z(\mathcal{M})$ is an abelian restricted subalgebra of $\g$ and hence decomposes as  $\z(\mathcal{M})\,=\,\z(\mathcal{M})_s\oplus\z(\mathcal{M})_n$ where
$\z(\mathcal{M})_s$ and $\z(\mathcal{M})_n$ are the maximal torus and the nilradical of $\z(\mathcal{M})$, respectively. If $\z(\mathcal{M})_s\ne 0$ then
$\mathcal{M}$ is contained in the centraliser of a nonzero toral element of $\g$, say $t$. Since $t\in \Lie(T)$ for some maximal torus $T$ of $G$, by \cite[11.8]{Borel}, and $M$ is a simple Lie algebra of dimension $125$, it is immediate from the Borel--de Siebenthal theorem that $M$ embeds into a Lie algebra of type ${\rm E}_7$. If $\z(\mathcal{M})_n\ne 0$ then
$\mathcal{M}$ is contained in the centraliser of a nonzero element $e\in\N_p(\g)$. Since $\dim \g_e\ge 125$ we have that $\OO(e)\ne \OO({\rm A}_4+{\rm A}_3)$. Then our discussion at the beginning of (\ref{4.2})
shows that $\g_e$ is contained in a proper parabolic subalgebra $\p=\Lie(P)$ of $\g$. Since $P=L\cdot R_u(P)$ for some Levi subgroup $L$ and $\Lie(R_u(P))$ is nilpotent, the simple Lie algebra $M$ injects into $\l:=\Lie(L)$. Since $\dim L\ge 125$ the Lie algebra
$[\l,\l]$ must have type ${\rm E}_7$.

A Lie algebra $\g_0$ of type ${\rm E}_7$ admits a non-degenerate invariant symmetric bilinear form
$\eta\colon\,\g_0\times \g_0\to \Bbbk$. If $M\subset \g_0$ the the restriction of $\eta$ to $M$ is nonzero because $\dim M>(\dim \g_0)/2$. The simplicity of $M$ then shows that $\eta_{\vert\,M}$ is a non-degenerate. But then $\g_0=M\oplus M^\perp$ where $M^\perp$ is the orthogonal complement of $M$ in $\g_0$. As $\dim M^\perp=\dim\g_0-125=133-125=8$ we have that $\dim M>
64= \dim \gl(M^\perp)$. As a consequence, the simple Lie algebra $M$ must act trivially on $M^\perp$, that is $[M,M^\perp]=0$.
But then $[\g_0,M]=[M+M^\perp,M]\subseteq M$ which implies that $M$ is an ideal of $\g_0$. This contradiction shows that $\z(\mathcal{M})=0$. Since $M$ is a restricted Lie algebra, applying \cite[Corollary~1.1.8]{Str04} yields $\mathcal{M}\cong M_p\cong M$. Therefore, $\mathcal{M}=M$, i.e. $M$ is a restricted subalgebra of $\g$.

Let $b$ be the invariant symmetric bilinear form on $\g$ introduced at the end of the proof of Theorem~\ref{frakw}. Since $M$ is simple and $\dim M>(\dim \g)/2$ the restriction of $b$ to $M$ is non-degenerate. Therefore, $\g=M\oplus M^\perp$ where $M^\perp=\{x\in \g\,|\,\,b(x,M)=0\}$. As $[M,M^\perp]\subseteq M^\perp$, the preceding discussion shows that $M^\perp$ is a restricted $M$-module.
If all composition factors of the $M$-module $M^\perp$
are trivial then $[M,M^\perp]=[M^{(\infty)},M^\perp]=0$
implying that $M$ is an ideal of $\g$. As this contradicts the simplicity of $\g$ we may assume  that at least one composition factor of the $M$-module $M^\perp$ is non-trivial. In particular, $M^\perp$ is a faithful $M$-module.

It was first observed by Kuznetsov in \cite{Ku} that the restricted Melikian algebra $M$ admits a $(\Z/3\Z)$-grading $M=M_{\bar{0}}\oplus M_{\bar{1}}\oplus M_{\bar{2}}$ such that $M_{\bar{0}}$ is a restricted Lie subalgebra of $M$ isomorphic to $W(2;\underline{1})$.
Given a $2$-dimensional torus $\t_0$  of $M_{\bar{0}}$ we can decompose the restricted $M_{\bar{0}}$-module
$M^\perp$  into weight spaces with respect to $\t$.
We denote by $\Gamma(M^\perp,\t_0)$ the set of all {\it nonzero} $\t$-weights of $M^\perp$.
Since the Lie algebra $M_{\bar{0}}$ is simple and $M^\perp$ is a faithful $M$-module, at least one composition factor of the $M_{\bar{0}}$-module $M^\perp$ is non-trivial.
Applying \cite[10.7.3]{Str09} we now deduce that $\Gamma(\t_0,M^\perp)=p^2-1=24$.
By \cite[Lemma~4.1]{P94}, there exists a $2$-dimensional torus $\t_0$ in $M_{\bar{0}}$ whose centraliser $H$ in $M$  is a $5$-dimensional Cartan subalgebra of $M$ with the property that $[H,[H,H]]=\t_0$. Each weight space
$M^\perp_\gamma$ with $\gamma\in\Gamma(\t_0,M^\perp)$ is invariant under the adjoint action of $H$. Since
$\gamma\in\t_0^*\setminus\{0\}$ and $\t_0\subseteq [H,H]$, the $H$-module  $M^\perp_\gamma$ has no $1$-dimensional composition factors. Representation theory of nilpotent Lie algebras then yields that each $\t_0$-weight space $M_\gamma^\perp$ with $\gamma\in\Gamma(\t_0,M^\perp)$ has dimension divisible by $p=5$. As a consequence, $\sum_{\gamma\in \Gamma(\t_0,M^\perp)}\,\dim M_\gamma^\perp\ge p(p^2-1)=120.$ Since $\dim M^\perp=240-125=123$ and $\dim \c_M(\t_0)=\dim H=5$ this gives $$\dim\c_\g(\t_0)=\dim\c_M(\t_0)+\dim \c_{M^\perp}(\t_0)\le 5+3=8.$$
Since $\t_0$ is a toral subalgebra of $\g$ and the field $\Bbbk$ is infinite we can find an element $t\in \t_0$ such that
$\c_\g(\t_0)=\g_t$. By \cite[11.8]{Borel}, there exists a maximal torus $T$ of $G$ such that $t\in \Lie(T)$. Since $\Lie(T)$ is abelian and has dimension $8$ it must be that $\c_\g(\t_0)=\g_t=\Lie(T)$. Since $H\subset \c_\g(\t_0)$ is non-abelian, this is impossible. We have reached a contradiction thereby proving that $\mathcal{M}(1,1)$ is not isomorphic to a Lie subalgebra of $\g$.

\subsection{Counterexamples to Morozov's theorem in bad characteristic: type ${\rm G}_2$} In this subsection we assume that $G$
is a group of type ${\rm G}_2$ and $p=2$. It was first observed by Robert Steinberg in \cite[2.6]{St}
that $\g\cong\psl_4$. This curious fact is explained in \cite[Sect.~4]{CE} as follows: in characteristic $2$, the irreducible $G$-module $E=L(\varpi_1)$ of highest weight $\varpi_1$
has dimension $6$ and carries a nonzero $G$-invariant symplectic form.  This gives us an inclusion $\g\subseteq \sp(E)$. Since $\g$ is a simple Lie algebra it is contained in the second derived subalgebra of $\sp(E)$ which is isomorphic to $\psl_4$. Hence $\g\cong \psl_4$ by dimension reasons.

Set $V:=O(2;\underline{1})$, a $4$-dimensional vector space over $\Bbbk$, and regard $V$ as the left regular $O(2;\underline{1})$-module. The Lie algebra $W(2;\underline{1})$ acts on $V$ by derivations. Let $\hat{\mathcal{W}}$ denote the semidirect product $W(2;\underline{1})\ltimes O(2;\underline{1})$ where $O(2;\underline{1})$ is regarded as an abelian ideal of $\hat{\mathcal{W}}$ acted upon by  $W(2;\underline{1})$ in the natural fashion. The above-mentioned actions of $O(2;\underline{1})$ and $W(2;\underline{1})$ are compatible in the sense that they give rise to a faithful representation $\rho\colon\,\hat{\mathcal{W}}\to \gl(V)$. It is straightforward to see that the representation $\rho$ is irreducible.

In characteristic $2$, the standard grading of $L:=W(2;\underline{1})$ is surprisingly short, namely, $L=L_{-1}\oplus L_0\oplus L_1$, and we have that $L_0\cong \gl_2$ and $[L_{-1},L_{1}]=L_0$. Furthermore, $L_1\cong L_{-1}^*$ is an irreducible $2$-dimensional
$L_0$-module. From this it is immediate that $L\cong \sl_3$ as Lie algebras (this can also be deduced from the fact that $L$ acts faithfully on the $3$-dimensional vector space $O(2;\underline{1})/\Bbbk 1$). In particular, $L$ is simple.
Let $\m$ be the maximal ideal of $O(2;\underline{1})$, so that $O(2;\underline{1})=\Bbbk 1\oplus \m$. Obviously, $\rho(1)={\rm Id}_V$ and $\rho(\m)^2=0$. Since the trace of ${\rm Id}_V$ is zero and $W(2;\underline{1})=W(2;\underline{1})^{(1)}$ by the preceding remark, $\rho(\hat{\mathcal{W}})\subset \sl(V)$.
Note that $\Bbbk 1$ coincides with the centre of the Lie algebra $\hat{\mathcal{W}}$. We now set $\mathcal{W}:=\hat{\mathcal{W}}/\Bbbk 1$, a semidirect product of $W(2;\underline{1})$ and  ${\rm nil}(\mathcal{W})=O(2;\underline{1})/\Bbbk 1$. By the above,
$\mathcal{W}$ is an $11$-dimensional Lie subalgebra of $\psl(V)\cong \g$.

We claim that $\mathcal{W}$ is a maximal subalgebra of
$\psl(V)$. Indeed, suppose the contrary. Then $\psl(V)$ has a proper subalgebra of codimension $\le 2$, say $\mathfrak{r}$. Taking the inverse image of $\mathfrak{r}$ in $\sl(V)$ we observe that $\sl(V)$ has a proper Lie subalgebra of codimension $\le 2$. Since the set of all $d$-dimensional subalgebras of $\sl(V)$ is Zariski closed in the Grassmannian ${\rm Gr}\big(d, \sl(V)\big)$ and is acted upon by a Borel subgroup $B$ of $\SL(V)$, it follows from the Borel fixed-point theorem that $\sl(V)$ contains a proper Lie subalgebra of codimension $\le 2$ normalised by $\Ad\,B$; we call it $\p$. Since the $\Bbbk$-span of ${\rm Id}_V$ is the only proper nonzero ideal of $\sl(V)$, the Lie subalgebra $\p+\Lie(B)$ of $\sl(V)$ is proper and $(\Ad\, B)$-stable. From this it is immediate that $\SL(V)$ contains a proper parabolic subgroup of codimension $\le 2$. Since $\dim V=4$ we reach a contradiction thereby proving the claim.

If $\mathcal{W}$ is a parabolic subalgebra of $\g\cong\psl(V)$ then its preimage $\rho(\hat{\mathcal{W}})$ in $\sl(V)$ is a proper parabolic subalgebra of $\sl(V)$.
But then $V$ is a reducible $\rho(\hat{\mathcal{W}})$-module. As this contradicts the irreducibility of $\rho\colon\,\hat{\mathcal{W}}\to \gl(V)$ we conclude
that the subalgebra $\mathcal{W}$ of $\g$ is a counterexample to Morozov's theorem in type ${\rm G}_2$.

{\footnotesize
\bibliographystyle{amsalpha}

\end{document}